\DeclareSymbolFont{sfoperators}{OT1}{ptm}{m}{n}
\DeclareSymbolFontAlphabet{\mathsf}{sfoperators}
\def\operator@font{\mathgroup\symsfoperators}
\newcommand{\eqdef}{\stackrel{\mbox{\tiny def}}{=}}
\numberwithin{equation}{section}
\newtheorem{thm}{Theorem}[section]
\newtheorem{lem}[thm]{Lemma}
\newtheorem{prop}[thm]{Proposition}
\newtheorem{cor}[thm]{Corollary}
\newtheorem{assumption}[thm]{Assumption}
\theoremstyle{remark}
\newtheorem{remark}[thm]{Remark}
\newtheorem{rmk}[thm]{Remark}
\def\th@newremark{\th@remark\thm@headfont{\bfseries}}
\def\bdiamond{\mathop{\mathpalette\bdi@mond\relax}}
\newcommand\bdi@mond[2]{%
	\vcenter{\hbox{\m@th
			\scalebox{\ifx#1\displaystyle 2.6\else1.8\fi}{$#1\diamond$}%
	}}%
}
\def\bDiamond{\mathop{\mathpalette\bDi@mond\relax}}
\newcommand\bDi@mond[2]{%
	\vcenter{\hbox{\m@th
			\scalebox{\ifx#1\displaystyle 2.6\else1.2\fi}{$#1\Diamond$}%
	}}%
}
\definecolor{darkgreen}{rgb}{0.1,0.7,0.1}
\definecolor{darkred}{rgb}{0.7,0.1,0.1}
\definecolor{darkblue}{rgb}{0,0,0.7}
\newcommand{\EE}{\mathbb{E}}
\newcommand{\NN}{\mathbb{N}}
\newcommand{\PP}{\mathbb{P}}
\newcommand{\RR}{\mathbb{R}}
\newcommand{\bB}{\mathcal{B}}
\newcommand{\dD}{\mathcal{D}}
\newcommand{\eE}{\mathcal{E}}
\newcommand{\fF}{\mathcal{F}}
\newcommand{\hH}{\mathcal{H}}
\newcommand{\iI}{\mathcal{I}}
\newcommand{\kK}{\mathcal{K}}
\newcommand{\mM}{\mathcal{M}}
\newcommand{\nN}{\mathcal{N}}
\newcommand{\R}{\mathcal{R}}
\newcommand{\sS}{\mathcal{S}}
\newcommand{\tT}{\mathcal{T}}
\newcommand{\xX}{\mathcal{X}}
\newcommand{\yY}{\mathcal{Y}}
\newcommand{\1}{\mathbf{1}}
\newcommand{\LLL}{\mathcal{L}}
\newcommand{\eps}{\varepsilon}
\newcommand{\FFF}{\mathcal{F}}
\newcommand{\uem}{u_{m,\eps}}
\newcommand{\vem}{v_{m,\eps}}
\newcommand{\cov}{{\operator@font cov}}
\newcommand{\var}{{\operator@font var}}
\newcommand{\corr}{{\operator@font corr}}
\newcommand{\diam}{{\operator@font diam}}
\newcommand{\Av}{{\operator@font Av}}
\newcommand{\trig}{{\operator@font trig}}
\newcommand{\Enh}{{\operator@font Enh}}
\newcommand{\lfl}{\left\lfloor }  
\newcommand{\rfl}{\right\rfloor} 
\colorlet{symbols}{blue!90!black}
\colorlet{testcolor}{green!60!black}
\def\${|\!|\!|}
\def\DeclareSymbol#1#2#3{\expandafter\gdef\csname MH@symb@#1\endcsname{\tikz[baseline=#2,scale=0.15,draw=symbols]{#3}}\expandafter\gdef\csname MH@symb@#1s\endcsname{\scalebox{0.7}{\tikz[baseline=#2,scale=0.15,draw=symbols]{#3}}}}
\def\<#1>{\csname MH@symb@#1\endcsname}
\setlist[itemize]{topsep=3pt,itemsep=1.5pt,parsep=0pt}
\def\scal#1{\langle#1\rangle}
\def\cent#1{\mathopen{{\langle\kern-0.3em\rangle}}#1\mathclose{{\langle\kern-0.3em\rangle}}}
\def\d{\partial}
\begin{document}

\title{Global well-posedness for the defocusing mass-critical stochastic nonlinear Schr\"{o}dinger equation on $\RR$ at $L^{2}$ regularity}
\author{Chenjie Fan$^1$ and Weijun Xu$^2$}
\institute{University of Chicago, US, \email{cjfanpku@gmail.com}
\and University of Oxford, UK / NYU Shanghai, China, \email{weijunx@gmail.com}}

\maketitle

\begin{abstract}
	We prove global existence and stability of solution to the mass-critical stochastic nonlinear Schr\"{o}dinger equation in $d=1$ at $L^{2}$ regularity. Our construction starts with the existence of solution to the truncated subcritical problem. With the presence of truncation, we construct the solution to the critical equation as the limit of subcritical solutions. We then obtain uniform bounds on the solutions to the truncated critical problems that allow us to remove truncation in the limit. 
\end{abstract}

\setcounter{tocdepth}{2}
\microtypesetup{protrusion=false}
\tableofcontents
\microtypesetup{protrusion=true}

\section{Introduction}

\subsection{The problem and the main statement}

The aim of this article is to show global existence of the solution to the one dimensional stochastic nonlinear Schr\"{o}dinger equation (SNLS)
\begin{equation} \label{eq:main_eq}
i \partial_t u+\Delta  u = |u|^4 u + u \circ \dot{W}, \qquad x \in \RR,\; t \geq 0
\end{equation}
with arbitrary $L^2$ initial data. Here, $\dot{W}$ is a real-valued Gaussian process that is white in time and coloured in space. The precise assumption on the noise will be specified below. In the equation, $\circ$ denotes the Stratonovich product, which is the only choice of the product that preserves the $L^2$-norm of the solution. In practice, it is more convenient to treat the equation in the It\^{o} form. In order to be more precise about the equation and the It\^{o}-Stratonovich correction, we give the precise definition of the noise below. 

Let $\{B_k\}_{k \in \NN}$ be a sequence of independent standard Brownian motions defined on some probability space $(\Omega, \FFF, \PP)$ with natural filtration $(\fF_t)_{t \geq 0}$. Fix a set of orthonormal basis $\{e_k\}$ of $L^{2}(\RR)$ and a linear operator $\Phi$ on it. The noise $\dot{W}$ in \eqref{eq:main_eq} is the time derivative of the Wiener process $W$, which is given by
\begin{equation*}
W(t,x) := \sum_{k \in \NN} B_{k}(t) \cdot (\Phi e_k)(x). 
\end{equation*}
$W$ is then a Gaussian process on $L^{2}(\RR)$ with covariance operator $\Phi \Phi^{*}$, which can be formally written as $W = \Phi \tilde{W}$ where $\tilde{W}$ is the cylindrical Wiener process. Our assumption on the operator $\Phi$ is the following.

\begin{assumption} \label{as:Phi}
	We assume $\Phi: L^{2}(\RR) \rightarrow \hH$ is a trace-class operator, where $\hH$ is the Hilbert space of real-valued functions with the inner product
	\begin{equation*}
	\scal{f,g}_{\hH} = \sum_{j=0}^{M} \scal{(1+|x|^{K}) f^{(j)}, (1+|x|^{K})g^{(j)}}_{L^{2}}
	\end{equation*}
	for some sufficiently large $K$ and $M$. 
\end{assumption}

Clearly $\hH \hookrightarrow W^{1,p}$ for every $p \in [1,+\infty]$, and our assumption of $\Phi$ implies $\Phi$ is $\gamma$-radonifying from $L^{2}$ to $W^{1,p}$ for every $p \in [1,+\infty]$ ($K, M \geq 10$ would be sufficient for our purpose). A typical example of such an operator is $\Phi e_0 = V(x)$ for some nice function $V$ while $\Phi$ maps all other basis vectors to $0$. In this case, the noise is $B'(t) V(x)$ where $B(t)$ is the standard Brownian motion. 

Note that the assumption for $\hH$ to be such a high regularity space is certainly not strictly necessary. On the other hand, we are not able to treat space-time white noise at this stage, and hence we are not so keen to the exact spatial regularity. 

\begin{rmk}
	With such spacial smoothness of the noise, one may wonder whether the solution theory for \eqref{eq:main_eq} follows directly from the deterministic case. This turns out to be not the case. In fact, the main issue is that the nonlinearity and randomness in the natural solution space together prevent one from setting up a usual fixed point problem. See Sections~\ref{sec:difficulty} and~\ref{sec:strategy} below for more detailed discussions. 
\end{rmk}

Now, we re-write \eqref{eq:main_eq} in its It\^{o} form as
\begin{equation*}
i \partial_t u + \Delta u = |u|^4 u + u \dot{W} - \frac{i}{2} u F_{\Phi}, 
\end{equation*}
where
\begin{equation*}
F_{\Phi}(x) = \sum_{k} (\Phi e_k)^{2}(x)
\end{equation*}
is the It\^{o}-Stratonovich correction. Note that $F_{\Phi}$ is independent of the choice of the basis. The assumption on $\Phi$ guarantees that $\|F_{\Phi}\|_{W^{1,p}_{x}} < \infty$ for all $p \in [1,+\infty]$. 

Before we state the main theorem, we introduce a few notations. Let $\sS(t) = e^{i t \Delta}$ be the linear propagator of the free Schr\"odinger equation. For every interval $\iI$, let
\begin{equation} \label{eq:space}
\xX_{1}(\iI) = L_{t}^{\infty} L_{x}^{2}(\iI) := L^{\infty}(\iI, L^{2}(\RR)), \quad \xX_{2}(\iI) = L_{t}^{5} L_{x}^{10}(\iI) := L^{5}(\iI, L^{10}(\RR)), 
\end{equation}
and $\xX(\iI) = \xX_{1}(\iI) \cap \xX_{2}(\iI)$ in the sense that $\|\cdot\|_{\xX(\iI)} = \|\cdot\|_{\xX_{1}(\iI)} + \|\cdot\|_{\xX_{2}(\iI)}$. We also write $L_{\omega}^{\rho} \xX$ as an abbreviation for $L^{\rho}(\Omega,\xX)$. Our main statement is then the following. 

\begin{thm} \label{th:main}
	For every $u_0 \in L_{\omega}^{\infty}L_{x}^{2}$ and $\fF_0$ measurable, we construct a global flow $u$ adapted to the filtration generated by $W$ such that for every $T>0$ and every $\rho_0 > 5$, we have $u \in L_{\omega}^{\rho_0}\xX(0,T)$, and it satisfies
	\begin{equation} \label{eq:duhamel_main}
	\begin{split}
	u(t) &= \sS(t) u_0 - i \int_{0}^{t} \sS(t-s) \big( |u(s)|^{4} u(s) \big) {\rm d} s\\
	&- i \int_{0}^{t} \sS(t-s) u(s) {\rm d} W_s - \frac{1}{2} \int_{0}^{t} \sS(t-s) \big( F_{\Phi} u(s) \big) {\rm d} s
	\end{split}
	\end{equation}
	in $L_{\omega}^{\rho_0}\xX(0,T)$, where the stochastic integral above is in the It\^{o} sense. Furthermore, there exists $B>0$ depending on $T$, $\rho_0$ and $\|u_0\|_{L_{\omega}^{\infty}L_{x}^{2}}$ only such that
	\begin{equation} \label{eq:main_control}	
	\|u\|_{L_{\omega}^{\rho_{0}}\xX(0,T)} \leq B, 
	\end{equation}
	and we have the pathwise mass conservation in the sense that $\|u(t)\|_{L_x^2} = \|u_0\|_{L_x^2}$ for all $t \in [0,T]$. Moreover, for every $M>0$ and $\delta>0$, there exists $\kappa = \kappa(M,\delta,T,\rho_0)$ such that if $u_0, v_0$ are $\fF_0$ measurable with
	\begin{equation*}
	\|u_0\|_{L_{\omega}^{\infty}L_{x}^{2}} \leq M, \qquad \|v_0\|_{L_{\omega}^{\infty}L_{x}^{2}} \leq M, \qquad \|u_0-v_0\|_{L_{\omega}^{\infty}L_{x}^{2}} \leq \kappa, 
	\end{equation*}
	then the solutions $u$ and $v$ to \eqref{eq:duhamel_main} as constructed in this artible satisfies
	\begin{equation*}
	\|u-v\|_{L_{\omega}^{\rho_0}\xX(0,T)} < \delta. 
	\end{equation*}
	Both $B$ and $\kappa$ above depend on the initial data through their $L_{\omega}^{\infty}L_{x}^{2}$-norm only. 
\end{thm}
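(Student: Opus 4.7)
\medskip

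\noindent\textbf{Proof proposal.} The plan is to follow the two-step approximation announced in the abstract. First I would introduce a doubly-approximated problem, indexed by a subcritical parameter $\varepsilon \in (0,1)$ and a truncation level $m \in \NN$, where the nonlinearity $|u|^{4}u$ is replaced by $\theta_{m}(\|u\|_{\xX_{2}(0,t)}) |u|^{4-2\varepsilon} u$ with $\theta_{m}$ a smooth cutoff equal to $1$ on $[0,m]$ and vanishing outside $[0,2m]$. For $\varepsilon>0$ the nonlinearity is mass-subcritical, and the cutoff makes the Strichartz nonlinear term a contraction on a ball of $L_{\omega}^{\rho_0}\xX(0,T)$; combined with the stochastic Strichartz estimates (Burkholder--Davis--Gundy in $L_{t}^{5}L_{x}^{10}$ applied to the It\^o integral, which converges thanks to Assumption~\ref{as:Phi}), this yields a unique adapted fixed point $u^{\varepsilon,m}$ solving the corresponding Duhamel equation on $[0,T]$ with pathwise $L^{2}$-mass conservation.

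Next, with $m$ held fixed, I would pass $\varepsilon \to 0$. The uniform-in-$\varepsilon$ bounds on $\|u^{\varepsilon,m}\|_{L_{\omega}^{\rho_0}\xX(0,T)}$ come from the truncation: the cutoff guarantees $\||u|^{4-2\varepsilon}u\|_{L_{t,x}^{\text{dual Strichartz}}} \lesssim m^{4-2\varepsilon}\|u\|_{\xX_{2}}$, which is stable as $\varepsilon\to 0$. Convergence of $u^{\varepsilon,m}$ to some $u^{m}$ in $L_{\omega}^{\rho_0}\xX(0,T)$ is then a fixed-point stability argument: one writes the Duhamel equation for the difference $u^{\varepsilon,m} - u^{\varepsilon',m}$, uses the Lipschitz bound $\big||z|^{4-2\varepsilon}z - |z'|^{4-2\varepsilon'}z'\big| \lesssim \varepsilon\text{-correction} + (|z|^{4}+|z'|^{4})|z-z'|$ along with the truncation, and closes via Gronwall. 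The limit $u^{m}$ satisfies \eqref{eq:duhamel_main} with the critical nonlinearity multiplied by $\theta_{m}(\|u^{m}\|_{\xX_{2}(0,t)})$, inherits pathwise mass conservation, and enjoys a bound $\|u^{m}\|_{L_{\omega}^{\rho_0}\xX(0,T)} \le C(m,T,\|u_0\|)$.

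The main difficulty is removing the truncation $m \to \infty$; this is where I expect the real work of the paper to lie. The key is to derive bounds on $u^{m}$ that are uniform in $m$, depending only on $T$, $\rho_0$, and $\|u_{0}\|_{L_{\omega}^{\infty}L_{x}^{2}}$. Pathwise $L^2$ conservation gives control on $\|u^m\|_{\xX_1}$ for free, but the critical Strichartz norm $\xX_2$ is not controlled by mass in any elementary way, even deterministically. The substitute for Dodson's deterministic result would be a stochastic analog: partition $[0,T]$ adaptively into subintervals on which the local $\xX_{2}$-norm of $u^{m}$ is small (depending only on the mass), establish a good local existence bound via Strichartz, then control the number of such subintervals using mass conservation and the noise contribution (itself bounded by Assumption~\ref{as:Phi} and BDG in $L_{\omega}^{\rho_0}$). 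Once $\|u^{m}\|_{L_{\omega}^{\rho_0}\xX(0,T)} \le B$ with $B$ independent of $m$, a Fatou/compactness argument shows that $\theta_{m}(\|u^{m}\|_{\xX_{2}}) \to 1$ in probability on $\{\|u^{m}\|_{\xX_{2}}\le B\}$, and a further stability argument (same Strichartz+Gronwall scheme as in the $\varepsilon$-limit) promotes this to convergence of $u^{m}$ to a solution $u$ of \eqref{eq:duhamel_main} in $L_{\omega}^{\rho_0}\xX(0,T)$ with the desired bound \eqref{eq:main_control}.

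Finally, the stability statement and the pathwise mass conservation would follow as by-products of the construction. For stability, the same Strichartz/Gronwall computation applied to $u-v$ with both $u_0,v_0$ confined to an $L_{\omega}^{\infty}L_{x}^{2}$-ball of radius $M$ and using the uniform bound \eqref{eq:main_control} yields the required $\kappa$-$\delta$ estimate. Mass conservation transfers from the truncated level (where it is a consequence of It\^o's formula applied to $\|u^{m}(t)\|_{L_{x}^{2}}^{2}$ with the It\^o--Stratonovich correction $-\tfrac{1}{2}F_{\Phi}$ cancelling the quadratic variation term) to the limit via the almost-sure convergence along a subsequence.
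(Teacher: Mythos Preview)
Your overall two-step architecture matches the paper, but there are two genuine gaps where the argument as written would not close.

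\medskip
\textbf{The $\varepsilon \to 0$ limit.} You propose a direct Lipschitz/Gronwall argument for the difference $u^{\varepsilon,m}-u^{\varepsilon',m}$. The trouble is the term you call the ``$\varepsilon$-correction'': comparing $|z|^{4-\varepsilon}z$ with $|z|^{4-\varepsilon'}z$ produces a factor like $|z|^{5}\,|\log|z||\,|\varepsilon-\varepsilon'|$, which cannot be controlled in $L_{t}^{1}L_{x}^{2}$ using only the $\xX$-norm of the solution --- you need pointwise ($L_{x}^{\infty}$) control, hence $H_{x}^{1}$ regularity. The paper resolves this by (i) perturbing the initial datum from $L_{x}^{2}$ to $H_{x}^{1}$ using a mollification/Egorov argument, (ii) proving a uniform-in-$\varepsilon$ persistence-of-$H_{x}^{1}$-regularity statement for the truncated problem, (iii) running your Gronwall only for the $H_{x}^{1}$ data (where the $L_{x}^{\infty}$ bound is available on a set of large probability), and (iv) transferring back via a uniform-in-$\varepsilon$ stability estimate at $L_{x}^{2}$ regularity. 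Your sketch omits this entire detour, and without it the $\varepsilon$-limit does not close.

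\medskip
\textbf{The uniform-in-$m$ bound.} You describe partitioning $[0,T]$ into subintervals on which $\|u^{m}\|_{\xX_{2}}$ is small and then controlling the number of such subintervals ``using mass conservation and the noise contribution''. This is the subcritical mechanism, and in the critical case mass conservation alone gives \emph{no} bound on the number of such intervals --- that is precisely what makes the problem hard, and it is exactly what Dodson's theorem supplies. The paper does not prove a stochastic analog of Dodson; it uses Dodson's deterministic bound $\|w\|_{\xX(\RR)}\le D_{M}$ as a black box. The stochastic equation is rewritten on each subinterval as a deterministic critical NLS perturbed by $g_{m}$ (the stochastic integral plus the correction term); a bootstrap built on Dodson's bound and its companion stability lemma shows that if $\|g_{m}\|_{\xX_{2}}$ is small on the subinterval then $\|u_{m}\|_{\xX_{2}}$ is bounded by a constant depending only on the mass. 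The random dissection is therefore chosen so that the \emph{martingale term} has small $L_{t}^{5}L_{x}^{10}$-norm on each piece, and the number of pieces is controlled by $\|\mM^{*}\|_{L_{t}^{5}(0,1)}$, which is bounded in $L_{\omega}^{\rho}$ by Burkholder. Partitioning based on $\|u^{m}\|_{\xX_{2}}$ itself, as you propose, is circular; partitioning based on the noise and invoking Dodson on each piece is what works.
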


\begin{rmk}
	To prove Theorem~\ref{th:main}, we will construct the solution $u$ in the fixed time interval $[0,1]$, and use pathwise mass conservation to extend it to the whole real line. Hence, from now on, we will consider $T=1$ only. 
\end{rmk}

\begin{rmk}
	Our solution $u$ to \eqref{eq:main_eq} is constructed via a fixed approximation procedure rather than a direct contraction principle. Thus, the uniqueness in Theorem~\ref{th:main} is in the quasilinear sense rather than semi-linear sense. More precisely, given every initial data $u_0 \in L_{\omega}^{\infty}L_{x}^{2}$, our construction produces a unique global flow $u$ satisfying \eqref{eq:duhamel_main}. This solution should \textit{not} be confused with the weak solutions which are typically obtained by compactness arguments. 
\end{rmk}

\begin{rmk}
Similar global well-posedness result can also be obtained for the focusing case with the mass of initial data below ground state. More precisely, let $Q$ be the unique positive radial solution to the equation
\begin{equation*}
- \Delta Q + Q = Q^5. 
\end{equation*}
Then as long as $\|u_0\|_{L_{\omega}^{\infty}L_{x}^{2}} < \|Q\|_{L^2}$, there is a unique global flow associated with \eqref{eq:main_eq} with $|u|^4 u$ replaced by $-|u|^4 u$. 
\end{rmk}

\begin{rmk}
In defocusing case, strictly pathwise mass conservation is not necessary.  As far as the noise allows one to have a (uniform) pathwise control for the growth  of mass, similar results will hold. We also expect the same results to hold in dimensions two and three with essentially the same arguments. 
\end{rmk}

\subsection{Background}
The nonlinear Schr\"odinger equation naturally arises from various physics models. The aim of the article is to investigate the impact of a multiplicative noise to the dynamics of mass critical NLS. 

The local well-posedness of the deterministic (defocusing) nonlinear Schr\"odinger equation 
\begin{equation} \label{eq:deterministic}
i\partial_t u+\Delta u=|u|^{p-1} u, \qquad u_0 \in L_{x}^{2}(\RR^d)
\end{equation}
for $p \in [1, 1+\frac{4}{d}]$ is based on Strichartz estimates and has been standard. In short, every $L_{x}^{2}$ initial data gives rise to a space-time function $u$ that satisfies \eqref{eq:deterministic} locally in time. We refer to \cite{cazenave1989some}, \cite{cazenave2003semilinear} and \cite{tao2006nonlinear} for more details. When in the mass subcritical case for general $L_{x}^{2}$ data or mass critical case for small $L^{2}$ data, the local existence time depends on the size of the data only, and one can extend the solution globally in time thanks to the conservation law. The general $L^{2}$ data problem for the mass critical case is much harder, but finally resolved in a series of recent works by Dodson (\cite{dodson2012global, dodson2016global, dodson2467global}). 

We remark that the behaviour of mass-critical problem and mass-subcritical problem are different. When $p-1=\frac{4}{d}$, the linear and nonlinear parts of the equation have the same strength. This will make the problem more subtle.

The study of local well-posedness for mass subcritical (defocusing) stochastic nonlinear Schr\"{o}dinger equation with a conservative multiplicative noise for $L^{2}$ initial data has been initiated in \cite{BD}. Global theory follows from local theory via pathwise mass conservation.  We want to remark here that even the local theory in \cite{BD} depends on the mass conservation law, and in particular is not totally of perturbative nature, which is very different from the deterministic case. There have been subsequent works in various refinements in the stochastic subcritical cases (see for example \cite{Rockner1} and \cite{Rockner2} for extensions to non-conservative cases). The energy subcritical situation has been treated in \cite{BD_H1} (see also \cite{Rockner_scattering}). 

In fact, even in the mass subcritical case (with $|u|^{\frac{4}{d}}$ replaced by $|u|^{\frac{4}{d}-\eps}$ in \eqref{eq:main_eq}), if one tries to directly construct a solution of \eqref{eq:duhamel_main} via contraction map (say in $L_{\omega}^{\rho_0} \xX$ for some $\rho_0$), then one may wonder why \eqref{eq:duhamel_main} is possible to hold since the integrability of the nonlinearity in the probability space can only be in $L^{\rho_{0}/(\frac{4}{d}+1)}$ rather than $L^{\rho_{0}}$. The key point is that, as in the work of \cite{BD}, the construction relies on the pathwise mass conversation law, and is not totally perturbative, so the stochastic process constructed there satisfies \eqref{eq:main_control} for all $\rho_{0}<\rho<\infty$. We also emphasise here that we need the initial data to be bounded in $L^{\infty}_{\omega}L_{x}^{2}$ rather than $L_{\omega}^{\rho}L_{x}^{2}$ for some $\rho$.

In \cite{Hornung}, the author constructed a local solution to the critical equation \eqref{eq:main_eq} stopped at a time when the Strichartz norm of the solution reaches some small positive value. This type of local well-posedness , to the best of our knowledge, cannot be directly combined with the mass conservation law to give a solution global in time, even for small initial data.

\subsection{Obstacles in adapting the deterministic theory}
\label{sec:difficulty}

The classical deterministic theory for the local well-posedness of nonlinear Schr\"{o}dinger equation follows form Picard iteration regime and is of perturbative nature. Let us consider \eqref{eq:deterministic} in $d=1$. To construct a local solution, one needs to show the operator $\Gamma$ given by
\begin{equation}
(\Gamma v)(t):=e^{it\Delta}u_{0}+i\int_{0}^{t}\sS(t-s)\big(|v(s)|^{p-1}v(s) \big) {\rm d} s
\end{equation}
defines a contraction in a suitable function space. Such spaces are indicated by the Strichartz estimates \eqref{eq:Strichartz_1} and \eqref{eq:Strichartz_2} below, for example
\begin{equation} \label{eq:deterministic_space}
\Big\{ v \in \xX(0,T): \|v\|_{\xX_2(0,T)} \leq \eta \Big\}
\end{equation}
for some suitable $T$ and $\eta$, where $\xX$ and $\xX_2$ are as in \eqref{eq:space}. In the subcritical case ($p<5$), whatever $\eta$ is, we can always choose $T$ small enough so that $\Gamma$ forms a contraction in the space \eqref{eq:deterministic_space}. This is because the Strichartz estimate would give us the factor $T^{1-\frac{p-1}{4}}$ in front of the nonlinearity (see Proposition~\ref{pr:pre_nonlinear} below). This factor is not available in the critical case when $p=5$. Nevertheless, one can still choose $\eta$ and $T$ small enough depending on the initial data so that $\Gamma$ still defines a contraction in \eqref{eq:deterministic_space}. But the key point is that the existence time $T$ depends on the profile of the initial data, so one cannot easily extend it to any fixed time. 

We now turn to the stochastic problem
\begin{equation*}
i \partial_t u + \Delta u = |u|^{p-1} u + u \circ \dot{W}
\end{equation*}
with $L^2$ initial data $u_0$. The natural space to search for the solution is $L_{\omega}^{\rho} \xX(0,T)$ for some $T>0$ and $\rho \geq 1$. However, the first obstacle is that the analogous Duhamel operator in this case does not even map $L_{\omega}^{\rho}\xX(0,T)$ to itself, whatever $\rho$ and $T$ are! This is because if $v \in L_{\omega}^{\rho}$, then the nonlinearity can only be in $L_{\omega}^{\rho/p}$. 

To overcome this problem, \cite{BD} introduced a truncation to kill the nonlinearity whenever $\|u\|_{\xX_{2}(0,t)}$ reaches $m$. In the fixed point problem, one then can replace $p$ powers of $u$ by $m^p$, thus yielding an operator mapping a space to itself (with the size of the norms depending on $m$). In the subcritical case $p<5$, one can make the time $T = T_m$ small enough to get a contraction, so that one gets a local solution in $u_{m} \in L_{\omega}^{\rho}(0,T_m)$. It can be extended globally due to pathwise mass conservation. Finally, they showed that the sequence of solutions $\{u_{m}\}$ actually converges as $m \rightarrow +\infty$. This relies on a uniform bound on $\{u_{m}\}$ when $p<5$. 

However, this construction does not extend to the critical case $p=5$, as there is no positive power of $T$ available to compensate the largeness of $m$ (unless $m$ itself is very small, in which case does not relate to the original problem any more).

\subsection{Overview of construction}
\label{sec:strategy}

From now on, we fix arbitrary $M > 0$ and $u_0$ independent of $W$ such that $\|u_0\|_{L_{\omega}^{\infty}L_{x}^{2}} \leq M$. Our aim is to construct a process $u \in L_{\omega}^{\rho_0}\xX(0,1)$ satisfying \eqref{eq:duhamel_main}. 

The starting point of our construction is the existence of the solution to the truncated subcritical problem from \cite{BD}. Then, we show that for any truncation, the subcritical solutions converge to the solution of the corresponding truncated critical problem. Finally, we obtain uniform bounds on the solutions to the truncated critical equations that allow us to remove the truncation. 

To be precise, we let $\theta: \RR \rightarrow \RR^{+}$ be a smooth function with compact support in $(-2,2)$ and $\theta = 1$ on $[-1,1]$. For every $m>0$, let $\theta_{m}(x) = \theta(x/m)$. Consider the truncated sub-critical equation
\begin{equation} \label{eq:trun_sub}
i \partial_t u_{m,\eps}+\Delta u_{m,\eps} = \theta_{m} \big(\|u_{m,\eps}\|_{\xX_{2}(0,t)}^{5} \big) \nN^{\eps}(\uem) + u_{m,\eps} \circ \dot{W}, \quad u_0 \in L_{\omega}^{\infty}L_{x}^{2}, 
\end{equation}
where $\nN^{\eps}(u) = |u|^{4-\eps}u$, and $u_0$ is $\fF_0$ measurable. We also used $\eps$ to denote $4-p$. The following global existence theorem is contained in \cite{BD}. 

\begin{thm} [De Bouard--Debussche]
	\label{th:global_trun}
	Let $\|u_0\|_{L_{\omega}^{\infty}L_{x}^{2}} < +\infty$. For every $m,\eps>0$ and every sufficiently large $\rho$, there exists $T = T(m, \eps, \rho, \|u_0\|_{L_{\omega}^{\infty}L_{x}^{2}})$ such that the equation \eqref{eq:trun_sub} has a unique solution $\uem$ in $L_{\omega}^{\rho}\xX(0,T)$. It satisfies the Duhamel formula
	\begin{equation} \label{eq:duhamel_trun_sub}
	\begin{split}
	u_{m,\eps}(t) &= \sS(t) u_0 - i \int_{0}^{t} \sS(t-s) \Big( \theta_{m} \big(\|\uem\|_{\xX_{2}(0,s)}^{5} \big) \nN^{\eps}(\uem(s))\Big) {\rm d} s\\
	&- i \int_{0}^{t} \sS(t-s) \uem(s) {\rm d} W_s - \frac{1}{2} \int_{0}^{t} \sS(t-s) \big(F_{\Phi} \uem(s) \big) {\rm d} s
	\end{split}
	\end{equation}
	in $L_{\omega}^{\rho} \xX(0,T)$. Furthermore, we have pathwise mass conservation in the sense that  
	\begin{equation}
	\|u_{m,\eps}(t)\|_{L_{x}^{2}} = \|u_0\|_{L_{x}^{2}}, \qquad \forall t \in [0,T]
	\end{equation}
	almost surely. As a consequence, one can iterate the construction to get a global in time flow $\uem$.
\end{thm}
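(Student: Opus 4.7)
The plan is to run a Banach fixed-point argument for the Duhamel operator
\begin{equation*}
(\Gamma v)(t) = \sS(t) u_0 - i \int_{0}^{t} \sS(t-s) \Big( \theta_{m}\big(\|v\|_{\xX_{2}(0,s)}^{5}\big) \nN^{\eps}(v(s))\Big)\,{\rm d}s - i \int_{0}^{t} \sS(t-s) v(s)\, {\rm d} W_s - \tfrac{1}{2}\int_{0}^{t} \sS(t-s) F_\Phi v(s)\,{\rm d}s
\end{equation*}
on a suitable closed ball of $L_{\omega}^{\rho}\xX(0,T)$. Strichartz estimates give $\|\sS(\cdot)u_0\|_{\xX(0,T)} \lesssim \|u_0\|_{L^2_x}$, which is controlled in $L^\infty_\omega \hookrightarrow L^\rho_\omega$ by hypothesis. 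For the nonlinear term one uses the dual-Strichartz pair $(5,10)$ together with H\"older in time, which exploits the strict subcritical exponent $4-\eps<4$ to produce a small factor $T^{\eps/4}$; the truncation $\theta_m$ simultaneously caps the remaining $\xX_2$-factor by a constant depending on $m$. The corresponding Lipschitz estimate is standard and, after taking the $L^\rho_\omega$ norm via H\"older in $\omega$, delivers a contraction constant of the form $C(m)\,T^{\eps/4}$ for the nonlinear piece.

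The stochastic convolution is the real heart of the matter. I would apply a Strichartz-adapted Burkholder--Davis--Gundy inequality: taking the $\xX_1 = L^\infty_t L^2_x$ and the $\xX_2 = L^5_t L^{10}_x$ norms inside $L^\rho_\omega$ and using the unitarity of $\sS(t)$ on $L^2_x$, one reduces the problem to estimating
\begin{equation*}
\Big( \sum_{k} \int_0^T \|\sS(t-s)(v\,\Phi e_k)\|_{\xX}^{2}\,{\rm d}s \Big)^{1/2}
\end{equation*}
inside $L^\rho_\omega$. The assumption that $\Phi$ is trace class into $\hH \hookrightarrow W^{1,p}_x$ for all $p$ supplies the summability in $k$ via $\sum_k \|\Phi e_k\|_{W^{1,p}}^2 < \infty$, while a further Strichartz step gives a factor $T^{1/2}$ from the time integration. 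The It\^o correction $\tfrac12 F_\Phi v$ is harmless since $\|F_\Phi\|_{L^\infty_x}<\infty$. Summing the four contributions, $\Gamma$ stabilises a ball of radius comparable to $\|u_0\|_{L^\infty_\omega L^2_x}$ and is a strict contraction once $T = T(m,\eps,\rho,\|u_0\|_{L^\infty_\omega L^2_x})$ is chosen small enough. This yields both the unique local solution in $L^\rho_\omega \xX(0,T)$ and the identity \eqref{eq:duhamel_trun_sub}.

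Pathwise mass conservation follows by applying the It\^o formula to $\|u_{m,\eps}(t)\|_{L^2_x}^2$: the deterministic nonlinearity produces $2\,{\rm Im}\int \theta_m |u|^{4-\eps}|u|^2\,{\rm d}x = 0$, while the It\^o stochastic integral contributes only an imaginary martingale and its quadratic variation is exactly cancelled by the Stratonovich correction $-\tfrac12 F_\Phi u$, by the very definition of $F_\Phi$ (this is the whole point of using the Stratonovich product). Hence $\|u_{m,\eps}(t)\|_{L^2_x} = \|u_0\|_{L^2_x}$ almost surely on $[0,T]$, so that $\|u_{m,\eps}(T)\|_{L^\infty_\omega L^2_x} \le \|u_0\|_{L^\infty_\omega L^2_x}$. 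Global existence is then obtained by iterating the local construction on $[T,2T],[2T,3T],\dots$ with the same $T$, since the relevant data bound never grows.

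The main obstacle is the stochastic convolution estimate: it is the only step that genuinely uses Assumption~\ref{as:Phi} and requires combining Strichartz smoothing with a vector-valued BDG / maximal inequality, and it also forces $\rho$ to be taken sufficiently large so that the $L^\rho_\omega$-moment of the supremum of the martingale is controlled. The nonlinear contraction and the mass-conservation computation, in contrast, follow from the subcritical gain $T^{\eps/4}$ and an It\^o-formula computation that is by now classical.
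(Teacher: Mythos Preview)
Your proposal is correct and matches the paper's own treatment: the paper does not give a detailed proof but states that the result ``follows from standard Picard iteration (which is purely perturbative) and relies essentially on Strichartz estimates and Burkholder inequality,'' referring to \cite[Proposition~3.1]{BD}. The one point you gloss over, and which the paper singles out in a remark, is that the truncated equation is \emph{not} translation invariant in time (since $\theta_m$ depends on $\|u\|_{\xX_2(0,s)}$ from time $0$), so iterating on $[T,2T],[2T,3T],\dots$ is not automatic from mass conservation alone; the fix is that the nonlinear bound $C\,T^{\eps/4}m^{4}\|u\|_{\xX}$ is itself translation invariant, so the same $T$ works on every subinterval.
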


\begin{remark}
	Strictly speaking, due to the truncation in the nonlinearity, the equation \eqref{eq:trun_sub} is \textit{not} translation invariant in time. Thus, it is not immediately obvious that a local theory with mass conservation can imply global existence. However, the nonlinearity satisfies a bound of the form
	\begin{equation*}
	\Big\|\int_{0}^{t}\theta_{m} \big(\|\uem\|_{\xX_{2}(0,s)}^{5} \big) \nN^{\eps}\big(\uem(s)\big) {\rm d}s \Big\|_{\xX(0,T)} \leq C T^{\frac{\eps}{4}} m^{4}\|\uem\|_{\xX(0,T)}. 
	\end{equation*}
	The right hand side above is translation invariant in time, and hence one can iterate the local construction to get a global flow.
	
	Also, we put $\theta_{m}\big( \|u\|_{\xX_2(0,s)}^{5} \big)$ instead of $\theta_{m} \big( \|u\|_{\xX_2(0,s)} \big)$ to make the argument of $\theta_m$ additive in time. This will simplify a few arguments in uniform-in-$m$ bounds in Section~\ref{sec:uniform_m} below. 
\end{remark}

Theorem \ref{th:global_trun} follows from standard Picard iteration (which is purely perturbative) and relies essentially on Strichartz estimates and Burkholder inequality. We refer to \cite[Proposition 3.1]{BD} for more details of the proof. The key part of \cite{BD}, however, is a uniform in $m$ bound for the sequence $\{u_{m,\eps}\}_{m}$ for every fixed $\eps>0$. This allows the authors to show the convergence of $u_{m,\eps}$ to a limit $u_{\infty,\eps}$, and that the limit solves the corresponding subcritical equation without truncation. 

On the other hand, in order to obtain a solution for the critical problem \eqref{eq:main_eq}, we need to send $\eps \rightarrow 0$ and $m \rightarrow +\infty$. Instead of starting from the subcritical solution $u_{\infty,\eps}$ directly, we fix $m$ and send $\eps \rightarrow 0$ first. 

\begin{prop}\label{pr:converge_e}
	Let $\uem$ be the solution to \eqref{eq:trun_sub} with initial data $u_0 \in L_{\omega}^{\infty}L_{x}^{2}$, so that it satisfies the Duhamel's formula \eqref{eq:duhamel_trun_sub}. Then, for every $\rho_0 \geq 5$, the sequence $\{u_{m,\eps}\}$ is Cauchy in $L_{\omega}^{\rho_0}(0,1)$. Furthermore, the limit $u_m$ satisfies the Duhamels' formula
	\begin{equation} \label{eq:trun_critical}
	\begin{split}
	u_{m}(t) &= \sS(t) u_0 - i \int_{0}^{t} \sS(t-s) \Big( \theta_{m}\big(\|u_{m}\|_{\xX_{2}(0,t)}^{5}\big) \nN\big(u_{m}(s)\big)\Big) {\rm d}s\\
	&- i \int_{0}^{t} \sS(t-s) u_{m}(s) {\rm d} W_s - \frac{1}{2} \int_{0}^{t} \sS(t-s) \big( F_{\Phi} u_{m}(s) \big) {\rm d}s
	\end{split}
	\end{equation}
	in $L_{\omega}^{\rho_0}\xX(0,1)$, where $\nN(v) = |v|^{4}v$. 
\end{prop}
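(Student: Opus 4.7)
The plan is to proceed in three stages: first establish uniform-in-$\eps$ bounds on $u_{m,\eps}$ at fixed $m$, then derive a quantitative Cauchy estimate by a difference argument, and finally pass to the limit in each term of the Duhamel formula to identify the limit.

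\textbf{Uniform bounds.} The first step is to establish
\begin{equation*}
\|u_{m,\eps}\|_{L^{\rho}_{\omega}\xX(0,1)} \leq C(m,\rho,\|u_0\|_{L^{\infty}_{\omega}L^{2}_x})
\end{equation*}
uniformly for $\eps$ in a small interval $(0,\eps_0]$. This comes from applying Strichartz estimates to \eqref{eq:duhamel_trun_sub}, a Burkholder--Davis--Gundy bound for the stochastic integral combined with Assumption~\ref{as:Phi}, pathwise mass conservation for the $\xX_1$-component, and the key observation that the truncation forces $\|u_{m,\eps}\|_{\xX_2(0,s)}\leq (2m)^{1/5}$ wherever the nonlinearity is active. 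The mild off-criticality of the exponent $4-\eps$ is absorbed by a Hölder-in-time cost on short subintervals.

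\textbf{Cauchy estimate.} Set $v := u_{m,\eps_1}-u_{m,\eps_2}$, subtract the two Duhamel formulas, and apply Strichartz with the dual pair $(1,2)$ together with a BDG bound on $\int \sS(\cdot-s)v\,dW_s$ to get, on a short interval $[0,\tau]\subset[0,1]$,
\begin{equation*}
\|v\|_{L^{\rho_0}_{\omega}\xX(0,\tau)} \lesssim \|\Delta\nN\|_{L^{\rho_0}_{\omega}L^{1}_{t}L^{2}_{x}(0,\tau)} + C_{\Phi}\,\|v\|_{L^{\rho_0}_{\omega}L^{2}_{t}L^{2}_{x}(0,\tau)},
\end{equation*}
where $\Delta\nN$ is the difference of the truncated nonlinearities. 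I would split $\Delta\nN$ into three pieces: $\theta_m(\|u_{m,\eps_1}\|^{5})[\nN^{\eps_1}(u_{m,\eps_1})-\nN^{\eps_1}(u_{m,\eps_2})]$, the "exponent shift" $\theta_m(\|u_{m,\eps_1}\|^{5})[\nN^{\eps_1}(u_{m,\eps_2})-\nN^{\eps_2}(u_{m,\eps_2})]$, and $[\theta_m(\|u_{m,\eps_1}\|^{5})-\theta_m(\|u_{m,\eps_2}\|^{5})]\nN^{\eps_2}(u_{m,\eps_2})$. The first and third are "Lipschitz in $v$" pieces, controlled using the Lipschitz property of $\theta_m$, the truncation, and interpolation between $\xX_1$ and $\xX_2$ by $C(m)\tau^{\alpha}\|v\|_{\xX(0,\tau)}$ for some $\alpha>0$; choosing $\tau$ small (depending on $m$) absorbs them into the left, and a standard Gronwall closes the stochastic contribution.

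\textbf{The error piece and identification.} The main obstacle is the middle piece, which equals $\theta_m(\cdot)\,u_{m,\eps_2}\bigl(|u_{m,\eps_2}|^{4-\eps_1}-|u_{m,\eps_2}|^{4-\eps_2}\bigr)$ and is not controlled by Lipschitz estimates alone. I would split pointwise into $\{|u_{m,\eps_2}|\leq A^{-1}\}$, $\{A^{-1}<|u_{m,\eps_2}|<A\}$, and $\{|u_{m,\eps_2}|\geq A\}$: on the middle region $|r^{\eps_1-\eps_2}-1|\to 0$ uniformly on $[A^{-1},A]$; on the small-$u$ region, pulling out one factor of $u_{m,\eps_2}$ and invoking mass conservation gives a bound independent of the exponent; the large-$u$ tail is controlled by the Step~1 bound on $\|u_{m,\eps_2}\|_{\xX_{2}}$ (choosing $A$ large first, then $\eps_1,\eps_2$ small). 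This shows the middle piece vanishes in $L^{\rho_0}_{\omega}L^{1}_{t}L^{2}_{x}$ as $\eps_1,\eps_2\to 0$. Iterating over $\lfloor 1/\tau\rfloor$ subintervals (the new initial datum $v(k\tau)$ inherits the smallness) yields $\|v\|_{L^{\rho_0}_{\omega}\xX(0,1)}\to 0$, hence the Cauchy property. To identify the limit $u_m$ with \eqref{eq:trun_critical}, I would pass to the limit in each term of \eqref{eq:duhamel_trun_sub}: the linear, stochastic, and correction terms converge by linearity and BDG; the nonlinear term converges almost surely in $L^{1}_{t}L^{2}_{x}$ by dominated convergence, using the Step~1 bound as a dominating function and the pointwise convergence $\nN^{\eps}(u_{m,\eps})\to\nN(u_m)$ together with continuity of $\theta_m$.
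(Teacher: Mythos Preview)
Your overall architecture is reasonable, but there is a genuine gap in the handling of the exponent-shift piece, and a secondary issue with the Lipschitz pieces.

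\textbf{The exponent-shift piece.} Your three-region splitting in $|u_{m,\eps_2}|$ does not close on the tail $\{|u|\geq A\}$. Bounding crudely by $2|u|^{5}$ there gives a contribution comparable to $\|u\,\mathbf{1}_{|u|\geq A}\|_{\xX_2}^{5}$, and neither the truncation (which only says $\|u\|_{\xX_2}\leq(2m)^{1/5}$) nor your Step~1 bound provides any spatial integrability higher than $L_{x}^{10}$; there is no mechanism forcing this tail to be small as $A\to\infty$ uniformly in $\eps$. Using instead the mean-value bound $\big||u|^{4-\eps_1}-|u|^{4-\eps_2}\big|\leq |\eps_1-\eps_2|\,|u|^{4}\ln|u|$ requires control of $\big\||u|^{5}\ln|u|\big\|_{L_{t}^{1}L_{x}^{2}}$, which again needs strictly more than $L_{x}^{10}$. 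The paper avoids this by a different route: it first proves a uniform-in-$\eps$ stability estimate for solutions to the \emph{same} truncated equation with different $L_{x}^{2}$ initial data (so no exponent shift appears), and uses that stability to perturb $u_0$ to some $v_0\in L_{\omega}^{\infty}H_{x}^{1}$ at arbitrarily small cost. A persistence-of-regularity lemma then propagates $H_{x}^{1}$ uniformly in $\eps$, and in $d=1$ Sobolev embedding gives $|v_{m,\eps_2}|\leq C\Lambda$ pointwise on the event $\Omega_\Lambda=\{\|v_{m,\eps_j}\|_{L_{t}^{\infty}H_{x}^{1}}\leq\Lambda\}$. On $\Omega_\Lambda$ the exponent shift is then bounded by $C\Lambda^{5}|\eps_1-\eps_2|$, while $\Omega_\Lambda^{c}$ is handled by Chebyshev against the $L_{\omega}^{2\rho_0}$ persistence bound. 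This $H^{1}$-perturbation manoeuvre is the key idea missing from your proposal.

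\textbf{The Lipschitz pieces.} Your claimed factor $\tau^{\alpha}$ with $\alpha>0$ uniform in $\eps$ is not available: the subcritical Strichartz estimate only yields $\tau^{\eps/4}$, which is useless as $\eps\to 0$, while the critical estimate gives no time factor and leaves you with $\|v\|_{\xX_2}\|u\|_{\xX_2}^{4}$, where $\|u\|_{\xX_2}\leq(2m)^{1/5}$ is not small. The paper instead performs a random dissection of $[a,b]$ into subintervals on which the $\xX_2$-increments of both $u_{m,\eps_j}$ are at most $\eta$; on each such piece the nonlinear factor becomes $\eta^{(4-\eps)/5}$, small uniformly for $\eps\in(0,1)$ once $\eta<1$, and the number of subintervals stays bounded by $C(m,\eta)$ thanks to the truncation. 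A fixed short time step does not achieve this.
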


Our next step is to show that the sequence $\{u_{m}\}$ also converges to a limit in $L_{\omega}^{\rho_0} \xX(0,1)$. The main ingredient is the following uniform bound. 

\begin{prop} \label{pr:uniform_m}
	Let $u_m$ be the process satisfying \eqref{eq:trun_critical} with $\|u_0\|_{L_{\omega}^{\infty}L_{x}^{2}} \leq M$. Then for every $\rho>0$, there exists $B = B(M, \rho)$ such that
	\begin{equation*}
	\|u_m\|_{L_{\omega}^{\rho}\xX(0,1)} \leq B
	\end{equation*}
	for all $m$. 
\end{prop}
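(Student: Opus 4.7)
The plan is to combine Strichartz estimates on short random subintervals with Burkholder's inequality and pathwise mass conservation. Since the equation is $L^{2}$-critical, no positive power of interval length is available in the Strichartz estimate of the nonlinearity, so the only mechanism for making $|u_{m}|^{4}u_{m}$ perturbative is the smallness of $\|u_{m}\|_{\xX_{2}}$ on the subinterval in question.

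The first step is a \emph{local bound}: there exists $\eta=\eta(M)>0$ such that whenever $0\leq \sigma\leq \tau\leq 1$ are stopping times with $\|u_{m}\|_{\xX_{2}(\sigma,\tau)}\leq \eta$ almost surely, one has
\begin{equation*}
\EE\|u_{m}\|_{\xX(\sigma,\tau)}^{\rho}\leq C(M,\rho),
\end{equation*}
uniformly in $m$. One restarts the Duhamel formula \eqref{eq:trun_critical} at $\sigma$: the linear propagator term is bounded by $\|u_{m}(\sigma)\|_{L^{2}}=\|u_{0}\|_{L^{2}}\leq M$ by pathwise mass conservation; the truncated nonlinearity satisfies
\begin{equation*}
\bigl\|\theta_{m}\bigl(\|u_{m}\|_{\xX_{2}(0,\cdot)}^{5}\bigr)\,|u_{m}|^{4}u_{m}\bigr\|_{L_{t}^{5/4}L_{x}^{10/9}(\sigma,\tau)} \leq \|u_{m}\|_{\xX_{2}(\sigma,\tau)}^{4}\|u_{m}\|_{\xX_{1}(\sigma,\tau)} \leq \eta^{4}\|u_{m}\|_{\xX(\sigma,\tau)},
\end{equation*}
and can be absorbed for $\eta$ small enough; the $F_{\Phi}$-drift is controlled directly by Strichartz and the regularity of $F_{\Phi}$; and the It\^o integral is handled by Burkholder combined with mass conservation on $\|u_{m}\|_{L^{2}}$.

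Second, I would partition $[0,1]$ via the stopping times $\sigma_{0}=0$, $\sigma_{k+1}:=\inf\{t>\sigma_{k}:\|u_{m}\|_{\xX_{2}(\sigma_{k},t)}^{5}\geq \eta^{5}\}\wedge 1$, and set $N_{m}:=\#\{k:\sigma_{k}<1\}$. Summing the local bound across subintervals reduces the statement to a uniform-in-$m$ bound on $\EE N_{m}^{\rho}$, or equivalently on $\EE\|u_{m}\|_{\xX_{2}(0,1)}^{\rho}$, since pathwise $N_{m}\eta^{5}\leq \|u_{m}\|_{\xX_{2}(0,1)}^{5}$.

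The uniform bound on $N_{m}$ is the main obstacle, and it is exactly where the critical scaling bites. The truncation $\theta_{m}$ gives a natural two-regime structure: on $[\tau_{2m},1]$ where $\tau_{2m}:=\inf\{t:\|u_{m}\|_{\xX_{2}(0,t)}^{5}\geq 2m\}\wedge 1$ the nonlinearity vanishes, so $u_{m}$ solves a linear stochastic Schr\"odinger equation and $\|u_{m}\|_{\xX(\tau_{2m},1)}$ is controlled directly by $\|u_{0}\|_{L^{2}}$ plus a noise contribution depending only on $\Phi$; on $[0,\tau_{2m}]$, in contrast, the full truncated critical equation holds and $\|u_{m}\|_{\xX_{2}(0,\tau_{2m})}^{5}$ can a priori be as large as $2m$. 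To close the bootstrap on $[0,\tau_{2m}]$, I would iterate the single-step local estimate along $\sigma_{k}$ up to $\tau_{2m}$, use pathwise mass conservation to keep the $L^{2}$-input to each step bounded by $M$, and carefully track the $L_{\omega}^{\rho}$-dependence so that the contributions from consecutive subintervals recombine into an estimate independent of $m$. Unlike the subcritical case treated in \cite{BD}, there is no factor $T^{1-(p-1)/4}$ to provide geometric damping between successive subintervals, so the required quantitative decay must come entirely from the $\eta^{4}$-smallness on each subinterval together with a delicate use of the truncation threshold $2m$; this is the delicate step I expect the authors to work out in detail.
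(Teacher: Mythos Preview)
Your approach has a genuine gap at exactly the point you flag as ``the delicate step'': the circularity is real and cannot be resolved by the tools you list. Partitioning $[0,1]$ by stopping times at which $\|u_{m}\|_{\xX_{2}}$ accumulates $\eta^{5}$ makes the number of subintervals $N_{m}$ comparable to $\|u_{m}\|_{\xX_{2}(0,1)}^{5}$, which is precisely the quantity to be bounded. The two-regime picture does not help: on $[0,\tau_{2m}]$ you only know $\|u_{m}\|_{\xX_{2}(0,\tau_{2m})}^{5}\leq 2m$, and no amount of $\eta^{4}$-smallness per step will turn $2m/\eta^{5}$ subintervals into a bound independent of $m$. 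There is no mechanism in your scheme that prevents the solution from spending an $m$-dependent amount of $\xX_{2}$-norm before the truncation kicks in.

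The paper's proof rests on an ingredient your proposal omits entirely: Dodson's global well-posedness and scattering for the deterministic mass-critical NLS (Theorem~\ref{th:Dodson}). From it the authors derive a deterministic black box (Proposition~\ref{pr:main_deter_sta}): on any interval $\iI$, if $\|u_{m}\|_{\xX_{1}(\iI)}\leq M$ and the ``noise part'' $g_{m}$ satisfies $\|g_{m}\|_{\xX_{2}(\iI)}\leq \eta_{M}$, then $\|u_{m}\|_{\xX_{2}(\iI)}\leq B_{M}$, uniformly in $m$ and in the length of $\iI$. The crucial difference from your local bound is that the smallness hypothesis is on $g_{m}$ (the stochastic plus correction terms), not on $u_{m}$ itself. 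The partition of $[0,1]$ is then made according to when the running maximal stochastic integral $\mM^{*}$ (depending only on $\|u_{m}(s)\|_{L^{2}}=\|u_{0}\|_{L^{2}}\leq M$) accumulates $\eta_{M}$ in $L_{t}^{5}$. The number of subintervals is therefore controlled by $\|\mM^{*}\|_{L_{t}^{5}(0,1)}^{5}$, whose $L_{\omega}^{\rho}$-norm is bounded by $C_{\rho}M$ via Burkholder and mass conservation, with no reference to $\|u_{m}\|_{\xX_{2}}$. This breaks the circularity: the partition depends only on the noise, and the bound on each piece comes from Dodson.
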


In order to show the convergence of $\{u_{m}\}$ in $L_{\omega}^{\rho_0}\xX(0,1)$ as $m \rightarrow +\infty$, it is essential that the uniform bound above holds with a strictly larger $\rho$ (and we need $\rho > 5 \rho_0$ in our case). With the help of Proposition~\ref{pr:uniform_m}, we can prove that $\{u_{m}\}$ is Cauchy in $L_{\omega}^{\rho_0}\xX(0,1)$, and the limit $u$ satisfies the Duhamel formula \eqref{eq:duhamel_main} in the same space and is stable under perturbation of initial data. This concludes the proof of Theorem~\ref{th:main}.

\begin{rmk}
	As mentioned earlier, the key point in \cite{BD} is to show $u_{m,\eps} \rightarrow u_{\infty,\eps}$ for every positive $\eps$, and that the limit satisfies the subcritical equation with nonlinearity $\nN^\eps$. Hence, it is also natural to start from $u_{\infty,\eps}$ and then sending $\eps \rightarrow 0$ to construct the candidate solution. Such construction requires a uniform-in-$\eps$ bound without the presence of truncation $m$. The proof of these bounds and subcritical approximations is more technically involved and uses concentration compactness. They will be treated in the separate article \cite{Subcritical_approx}. 
\end{rmk}

\begin{rmk}
	We finally remark that it might be possible to use rough path theory and regularity structures developed in \cite{rp, controlled_rp, rs} to develop a pathwise solution theory to \eqref{eq:main_eq}. This would avoid the problem of the stochastic integrability, and may give a direct construction via a fixed point argument. We plan to investigate this issue in future work.
\end{rmk}

\subsection*{Structure of the article}

The rest of the article is organized as follows. We first give some preliminary lemmas and bounds on the equation in Section~\ref{sec:preliminary}. These bounds will be used throughout the article. Section~\ref{sec:converge_e}  gives the convergence in of subcritical solutions to the critical one with the presence of truncation.  In Section~\ref{sec:uniform_m}, we prove uniform bounds on the family of solutions to the truncated critical equations. Finally, in Section~\ref{sec:converge_m}, these bounds are used to show that the truncation can be removed, thus yielding a construction of the solution to \eqref{eq:main_eq} as well as its stability.

\subsection*{Notations}

We now introduce the notations used in this article. For any interval $\iI$, we use $L_{t}^{q} L_{x}^{r} (a,b)$ to denote the space $L^{q}(\iI, L^{r}(\RR))$, and we also write $L_{\omega}^{\rho} \yY = L^{\rho}(\Omega, \yY)$. We fix the spaces $\xX_1$ and $\xX_2$ to be
\begin{equation*}
\xX_1 (\iI) = L_{t}^{\infty} L_{x}^{2}(\iI), \qquad \xX_{2}(\iI) = L_{t}^{5} L_{x}^{10}(\iI), 
\end{equation*}
and $\xX(\iI) = \xX_{1}(\iI) \cap \xX_{2}(\iI)$ with the norm being their sum. Some intermediate steps in the proof require us to go to a higher regularity space than $L_{x}^{2}$, so we let $\xX^{1}(\iI)$ be the space of functions such that
\begin{equation*}
\|u\|_{\xX^{1}(\iI)} \eqdef \|u\|_{\xX(\iI)} + \|\d_x u\|_{\xX(\iI)} < +\infty. 
\end{equation*}
Throughout this article, we fix an arbitrary $\rho_0 > 5$, and all the dependence of $\rho_0$ will be omitted below for simplicity. 

We also write $\nN(u) = |u|^{4} u$ and $\nN^{\eps}(u) = |u|^{4-\eps} u$. Finally, we fix $\theta$ to be a non-negative smooth function on $\RR$ with compact support in $(-2,2)$ such that $\theta(x) = 1$ for $|x| \leq 1$. For every $m>0$, we let $\theta_{m}(x) = \theta(x/m)$. 

Finally, $C, C_{m,M}, C_{\rho}$ etc. denote constants whose value may change from line to line. The dependence of these constants on certain parameters are indicated by the subscripts. Also, since we fix $\rho_0 \geq 5$ throughout the article, we omit the dependence on $\rho_0$ in all the bounds below.

\subsection*{Acknowledgements}

We thank Carlos Kenig and Gigliola Staffilani for helpful discussions and comments. CF would also like to thank Chuntian Wang and Jie Zhong for discussions. This work was initiated during the Fall 2015 program ``New Challenges in PDE: Deterministic Dynamics and Randomness in High and Infinite Dimensional Systems" held at MSRI, Berkeley. We thank MSRI for providing a stimulating mathematical environment. 

Part of this work was conducted when CF was a graduate student in MIT. CF was partially supported by NSF DMS 1362509 and DMS 1462401. WX acknowledges the support from the Engineering and Physical Sciences Research Council through the fellowship EP/N021568/1. 

The material presented in this article is contained in the following two preprints of the authors: arXiv:1803.03257 and arXiv:1807.04402. After we finished those two preprints, we decided to re-organise the material to form the current paper. The other part of arXiv1807.04402, which has not been covered in the current article, will be re-written as another independent one {\cite{Subcritical_approx}}. Only the current article and \cite{Subcritical_approx} will be submitted for journal publication. 

\section{Preliminaries}\label{sec:preliminary}

\subsection{The Wiener process and Burkholder inequality}

Our main assumption on the noise $W$ is that it can be written as $W = \Phi \tilde{W}$, where $\tilde{W}$ is the cylindrical Wiener process on $L^2(\RR)$, and $\Phi$ is a trace-class operator satisfying Assumption~\ref{as:Phi}. 

We now introduce the notion of $\gamma$-radonifying operators since the Burkholder inequality we use below is most conveniently expressed with this notion. A linear operator $\Gamma: \bB \rightarrow \tilde{\hH}$ from a Banach space $\bB$ to a Hilbert space $\tilde{\hH}$ is $\gamma$-radonifying if for any sequence $\{\gamma_k\}_k$ of independent standard normal random variables on a probability space $(\tilde{\Omega}, \tilde{\fF}, \tilde{\PP})$ and any orthonormal basis $\{e_k\}_k$ of $\tilde{\hH}$, the series $\sum_k \gamma_k \Gamma e_k$ converges in $L^2(\tilde{\Omega}, \bB)$. The $\gamma$-radonifying norm of the operator $\Gamma$ is then defined by
\begin{equation*}
\|\Gamma\|_{\R(\tilde{H}, \bB)} = \Big( \EE \|\sum_k \gamma_k \Gamma e_k\|_{\bB}^{2} \Big)^{\frac{1}{2}}, 
\end{equation*}
which is independent of the choice of $\{\gamma_k\}$ or $\{e_k\}$. We then have
\begin{equation*}
\|\Phi\|_{\R(L_{x}^{2}, \hH)} < +\infty
\end{equation*}
for the Hilbert space $\hH$ specified in Assumption~\ref{as:Phi}, where the Hilbert space $\tilde{H}$ here is $L_x^2(\RR)$. We also need the following factorisation lemma. 

\begin{lem} \label{le:factorisation}
	Let $\kK$ be a Hilbert space, and $\eE$ and $\bB$ be Banach spaces. For every $\Gamma \in \R(\kK,\eE)$ and $\tT \in \LLL(\eE,\bB)$, we have $\tT \circ \Gamma \in \R(\kK,\bB)$ with the bound
	\begin{equation*}
	\|\tT \circ \Gamma\|_{\R(\kK,\bB)} \leq \|\tT\|_{\LLL(\eE,\bB)} \|\Gamma\|_{\R(\kK,\eE)}. 
	\end{equation*}
	In particular, if $\eE = L^p$ and $\tT$ is given by the multiplication of an $L^q$ function $\sigma$ with $\frac{1}{p} + \frac{1}{q} = \frac{1}{r} \leq 1$, then $\sigma \Gamma \in \R(\kK,L^r)$ with
	\begin{equation*}
	\|\sigma \Gamma\|_{\R(\kK,L^r)} \leq \|\sigma\|_{L^q} \|\Gamma\|_{\R(\kK,L^p)}. 
	\end{equation*}
\end{lem}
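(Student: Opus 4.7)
The plan is to establish the first part as the standard \emph{ideal property} of $\gamma$-radonifying operators, and then derive the multiplication statement as a direct consequence via Hölder's inequality.

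For the first part, I would fix an arbitrary orthonormal basis $\{e_k\}_k$ of $\kK$ and a sequence $\{\gamma_k\}_k$ of independent standard normal random variables on some probability space $(\tilde{\Omega},\tilde{\fF},\tilde{\PP})$. Since $\Gamma \in \R(\kK,\eE)$, the partial sums $S_N := \sum_{k=1}^{N} \gamma_k \Gamma e_k$ form a Cauchy sequence (and therefore converge) in $L^2(\tilde{\Omega},\eE)$ to some limit $S$. Because $\tT \in \LLL(\eE,\bB)$ is a bounded linear map, it induces a bounded linear map from $L^2(\tilde{\Omega},\eE)$ to $L^2(\tilde{\Omega},\bB)$ with the same norm, so $\tT S_N = \sum_{k=1}^{N} \gamma_k (\tT \circ \Gamma) e_k$ converges in $L^2(\tilde{\Omega},\bB)$ to $\tT S$. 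This shows $\tT \circ \Gamma \in \R(\kK,\bB)$. The norm estimate then follows from the pointwise bound $\|\tT S\|_{\bB} \leq \|\tT\|_{\LLL(\eE,\bB)} \|S\|_{\eE}$ on $\tilde{\Omega}$: squaring, taking expectations and then square roots yields
\[
\|\tT \circ \Gamma\|_{\R(\kK,\bB)} = \bigl(\tilde{\EE}\|\tT S\|_{\bB}^{2}\bigr)^{1/2} \leq \|\tT\|_{\LLL(\eE,\bB)} \bigl(\tilde{\EE}\|S\|_{\eE}^{2}\bigr)^{1/2} = \|\tT\|_{\LLL(\eE,\bB)} \|\Gamma\|_{\R(\kK,\eE)}.
\]

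For the second part, Hölder's inequality gives $\|\sigma f\|_{L^r} \leq \|\sigma\|_{L^q}\|f\|_{L^p}$ for all $f \in L^p$, where $\tfrac{1}{r} = \tfrac{1}{p}+\tfrac{1}{q}\leq 1$. Hence the multiplication operator $M_\sigma : L^p \to L^r$, $f \mapsto \sigma f$, is bounded linear with $\|M_\sigma\|_{\LLL(L^p,L^r)} \leq \|\sigma\|_{L^q}$. Applying the first part with $\tT = M_\sigma$, $\eE = L^p$, $\bB = L^r$ gives the claimed bound for $\sigma \Gamma = M_\sigma \circ \Gamma$.

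There is no real obstacle to overcome: the only point one must be a little careful about is the convergence issue, i.e.\ confirming that $\sum_k \gamma_k (\tT \circ \Gamma)e_k$ actually converges in $L^2(\tilde{\Omega},\bB)$ so that the $\R(\kK,\bB)$ norm is well-defined, but this follows immediately from the continuity of $\tT$ as noted above. The entire argument is a short, abstract application of the definition of the $\gamma$-radonifying norm together with Hölder's inequality.
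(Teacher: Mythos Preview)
Your proof is correct and follows the same approach as the paper: the paper simply cites \cite[Lemma~2.1]{BD} for the first claim (the ideal property of $\gamma$-radonifying operators, for which you have supplied the standard short argument) and then invokes H\"older's inequality for the second, exactly as you do.
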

\begin{proof}
	The first claim is same as \cite[Lemma 2.1]{BD}. The second claim is an immediate consequence of the first one and H\"{o}lder's inequality. 
\end{proof}

The Burkholder inequality (\cite{BDG, Burkholder}) is very useful in controlling moments of the supremum of a martingale. We will make use of the following version. 

\begin{prop} \label{pr:Burkholder}
	Let $W = \Phi \tilde{W}$ with $\tilde{W}$ and $\Phi$ be as described above. Let $\sigma$ be adapted to $\fF_t$. Then, for every $p \in [2,\infty)$, every $\rho \in [1,\infty)$ and every interval $[a,b]$, we have
	\begin{equation*}
	\EE \sup_{t \in [a,b]} \Big\| \int_{a}^{t} \sigma(s) {\rm d} W_s \Big\|_{L^p}^{\rho} \leq C \EE \Big( \int_{a}^{b} \|\sigma(s) \Phi\|_{\R(L^{2}, L^{p})}^{2} {\rm d} s \Big)^{\frac{\rho}{2}}. 
	\end{equation*}
	The constant $C$ depends on $p$ and $\rho$ only. 
\end{prop}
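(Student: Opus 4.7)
The plan is to derive the inequality by combining the scalar Burkholder--Davis--Gundy (BDG) inequality applied pointwise in $x$ with Minkowski's integral inequality and a Gaussian characterisation of the $\gamma$-radonifying norm.

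First, I would expand $W = \Phi \tilde{W} = \sum_k (\Phi e_k) B_k$, so that
\begin{equation*}
M_t \eqdef \int_a^t \sigma(s) \, dW_s = \sum_k \int_a^t \sigma(s) \Phi e_k \, dB_k(s).
\end{equation*}
For each fixed $x \in \RR$, the scalar process $t \mapsto M_t(x)$ is a continuous martingale adapted to $(\mathcal{F}_t)$ with quadratic variation $\int_a^t \sum_k |(\sigma(s)\Phi e_k)(x)|^2 \, ds$. Scalar BDG therefore yields, for every $\rho \geq 1$,
\begin{equation*}
\EE \sup_{t \in [a,b]} |M_t(x)|^{\rho} \leq C_{\rho}\, \EE \Big( \int_a^b \sum_k |(\sigma(s)\Phi e_k)(x)|^2 \, ds \Big)^{\rho/2}.
\end{equation*}

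Second, I would take the $L^p_x$ norm and interchange expectation with spatial integration. Using $\sup_t \|M_t\|_{L^p_x} \leq \|\sup_t |M_t(\cdot)|\|_{L^p_x}$ and Minkowski's integral inequality in the form $\|f\|_{L^\rho_\omega L^p_x} \leq \|f\|_{L^p_x L^\rho_\omega}$ (valid when $\rho \geq p$), followed by a second Minkowski step to swap $L^p_x$ with $L^2_s$ (valid when $p \geq 2$), the problem reduces to controlling $\EE \big(\int_a^b \|u(s,\cdot)\|_{L^p_x}^2 \, ds \big)^{\rho/2}$, where $u(s,x) = \big(\sum_k |(\sigma(s)\Phi e_k)(x)|^2\big)^{1/2}$.

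Third, I would identify $\|u(s,\cdot)\|_{L^p_x}$ with $\|\sigma(s)\Phi\|_{\R(L^2, L^p)}$ up to constants depending only on $p$. The Gaussian series $G = \sum_k \gamma_k (\sigma(s)\Phi e_k)$ in $L^p$ takes Gaussian values at each $x$ with variance $u(s,x)^2$; hence $\EE|G(x)|^p = c_p\, u(s,x)^p$ and Fubini gives $\EE \|G\|_{L^p}^p = c_p \|u(s,\cdot)\|_{L^p_x}^p$. Kahane--Khintchine then implies $(\EE \|G\|_{L^p}^2)^{1/2} \sim (\EE \|G\|_{L^p}^p)^{1/p}$ with $p$-dependent constants, and the left-hand side is by definition $\|\sigma(s)\Phi\|_{\R(L^2, L^p)}$. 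Chaining these steps establishes the inequality for every $\rho \geq p$.

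The main obstacle is the remaining range $\rho \in [1, p)$, where the Minkowski swap in step two fails. I would handle this by applying Doob's maximal inequality to the non-negative submartingale $\|M_t\|_{L^p}$, reducing $\EE \sup_t \|M_t\|_{L^p}^\rho$ to $C_\rho \EE \|M_b\|_{L^p}^\rho$ for $\rho > 1$, and then approximating $\sigma$ by adapted simple step functions so that the increments $M_{s_{j+1}} - M_{s_j}$ are conditionally Gaussian in $L^p$ given $\mathcal{F}_{s_j}$. Kahane--Khintchine applied conditionally converts the $L^p_\omega$ control from the case $\rho = p$ into an $L^\rho_\omega$ control with a constant depending on $p$ and $\rho$ only, and summing via the $\ell^2$-structure of the quadratic variation extends the estimate to all $\rho \in [1,\infty)$. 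Alternatively, one can cite the general theory of stochastic integration in martingale-type-$2$ Banach spaces (Brze\'{z}niak; van Neerven--Veraar--Weis), which covers the full parameter range directly.
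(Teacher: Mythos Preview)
The paper does not actually prove this proposition: it simply records it as a known result and refers the reader to \cite[Theorem~2.1]{BP} and to \cite{Brzezniak, UMD} for details. Your closing remark, that one can invoke the general theory of stochastic integration in martingale-type-$2$ (equivalently, $2$-smooth) Banach spaces, is therefore exactly what the paper does, and it is the correct route.

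Your explicit scalar-BDG-plus-Minkowski argument, however, has a genuine gap for $\rho\neq p$. After the first Minkowski step $\|\,\cdot\,\|_{L^\rho_\omega L^p_x}\le\|\,\cdot\,\|_{L^p_x L^\rho_\omega}$ and pointwise BDG, you arrive at $\|u\|_{L^p_x L^\rho_\omega L^2_s}$, with $u(s,x)=\big(\sum_k|(\sigma(s)\Phi e_k)(x)|^2\big)^{1/2}$. The target is $\|u\|_{L^\rho_\omega L^2_s L^p_x}$. But $L^p_x$ and $L^2_s$ are not adjacent here, and no chain of Minkowski swaps bridges the two: for $\rho>p$ one always lands on a quantity that dominates, rather than is dominated by, the target. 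A quick check with $u=\1_{A_1}\1_{B_1}\1_{C_1}+\1_{A_2}\1_{B_2}\1_{C_2}$ (disjoint unit-measure sets in $\omega$, $s$, $x$) gives $\|u\|_{L^p_x L^\rho_\omega L^2_s}=2^{1/p}>2^{1/\rho}=\|u\|_{L^\rho_\omega L^2_s L^p_x}$, so the inequality you need is false in general. The case $\rho=p$ is the only one where the chain closes, because Fubini lets you swap $L^\rho_\omega$ and $L^p_x$ for free. The references the paper cites avoid this obstruction by using the $2$-smoothness of $L^p$ (for $p\ge 2$) and an It\^o-formula argument at the level of $\|M_t\|_{L^p}^2$, rather than working pointwise in $x$.
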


The proof can be found, for example, in \cite[Theorem 2.1]{BP}. More details about this version of the inequality can be found in \cite{Brzezniak, UMD}.

\subsection{Dispersive and Strichartz estimates}

We give some dispersive and Strichartz estimates of the free Schr\"odinger operator, which will be used throughout the article. They are now standard and can be found in \cite{cazenave2003semilinear}, \cite{keel1998endpoint} and \cite{tao2006nonlinear}. 

Recall that $\sS(t) = e^{i t \Delta}$. We need the following dispersive estimates of the semigroup $\sS$ in $d=1$. 

\begin{prop} [Dispersive estimates]
	\label{pr:dispersive}
	There exists a universal constant $C>0$ such that
	\begin{equation} \label{eq:dispersive}
	\|\sS(t) f\|_{L^{p'}} \leq C t^{\frac{1}{p}-\frac{1}{2}} \|f\|_{L^{p}}
	\end{equation}
	for every $p \in [1,2]$ and every $f \in L^{p}(\RR)$. Here, $p'$ is the conjugate of $p$. 
\end{prop}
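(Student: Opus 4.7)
The proof follows the classical two-endpoint-plus-interpolation strategy.

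First, I would establish the endpoint at $p = 2$. Here the exponent $\frac{1}{p}-\frac{1}{2}$ vanishes and the inequality reduces to the $L^2$-unitarity of $\sS(t) = e^{it\Delta}$. This is immediate from Plancherel's theorem, since $\sS(t)$ acts on the Fourier side by multiplication by $e^{-it|\xi|^2}$, a function of unit modulus.

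Next, I would establish the endpoint at $p = 1$, where the exponent $\frac{1}{p}-\frac{1}{2}$ equals $\frac{1}{2}$. The key ingredient is the explicit Schr\"odinger kernel representation
\[
(\sS(t) f)(x) = \frac{1}{\sqrt{4\pi i t}} \int_{\RR} e^{i(x-y)^{2}/(4t)} f(y)\, {\rm d} y,
\]
obtained by analytic continuation of the Gaussian heat-kernel formula. Since this kernel has modulus $(4\pi |t|)^{-1/2}$, pulling absolute values inside the integral gives the $L^{1} \to L^{\infty}$ endpoint with the desired time prefactor.

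Finally, I would apply the Riesz--Thorin interpolation theorem to the linear map $\sS(t)$ simultaneously bounded from $L^{2}$ to $L^{2}$ with norm $1$ and from $L^{1}$ to $L^{\infty}$ with norm $C |t|^{-1/2}$. Choosing the interpolation parameter $\theta = \tfrac{2}{p} - 1 \in [0,1]$ so that $\frac{1}{p} = \frac{1-\theta}{2} + \theta$ (and hence $\frac{1}{p'} = \frac{1-\theta}{2} + 0$), the resulting operator norm from $L^{p}$ to $L^{p'}$ is bounded by $1^{1-\theta} \cdot (C |t|^{-1/2})^{\theta} = C^{\theta} |t|^{-\theta/2}$, and the identity $\theta/2 = \frac{1}{p} - \frac{1}{2}$ recovers exactly the exponent $\frac{1}{p}-\frac{1}{2}$ appearing in the statement.

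The argument is entirely classical and presents no substantive obstacle. The only mildly technical point is rigorously justifying the complex-Gaussian kernel at the $p = 1$ endpoint, since $e^{-it|\xi|^2}$ is not integrable on the Fourier side. This is standardly handled by introducing a regularisation: work with the holomorphic semigroup $e^{s\Delta}$ for $s \in \CC$ with $\Re s > 0$ (where the kernel is a bona fide Gaussian), and then let $\Re s \to 0^{+}$ along $\Im s = t$ to obtain the Schr\"odinger kernel as a distributional limit. Once this representation is in place, the remainder of the argument is a line-by-line interpolation computation.
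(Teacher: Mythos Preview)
Your proof is correct and is precisely the classical argument. The paper does not actually prove this proposition: it simply states the estimate and refers the reader to the standard references (Cazenave, Keel--Tao, Tao) where exactly the argument you outline --- $L^2$ unitarity, the explicit oscillatory kernel for the $L^1 \to L^\infty$ bound, and Riesz--Thorin interpolation --- is carried out.
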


We now turn to Strichartz estimates. A pair of real numbers $(q,r)$ is called an admissible pair (for $d=1$) if
\begin{equation} \label{eq:admissible}
\frac{2}{q} + \frac{1}{r} = \frac{1}{2}. 
\end{equation}
The following Strichartz estimates give the right space to build solutions. 

\begin{prop} [Strichartz estimates]
	\label{pr:Strichartz}
	For every two admissible pairs $(q,r)$ and $(\tilde{q}, \tilde{r})$, there exists $C>0$ such that
	\begin{equation} \label{eq:Strichartz_1}
	\|\sS(t) f\|_{L_{t}^{q}L_{x}^{r}(\RR)} \leq C \|f\|_{L_{x}^{2}}
	\end{equation}
	for all $f \in L_x^2$, and
	\begin{equation} \label{eq:Strichartz_2}
	\Big\|\int_{a}^{t} \sS(t-s) \sigma(s) {\rm d} s \Big\|_{L_{t}^{q}L_{x}^{r}(\iI)} \leq C \|\sigma\|_{L_{t}^{\tilde{q}'}L_{x}^{\tilde{r}'}(\iI)}
	\end{equation}
	for all $\sigma \in L_{t}^{\tilde{q}'}L_{x}^{\tilde{r}'}(\iI)$. Here, $\tilde{q}'$, $\tilde{r}'$ are conjugates of $\tilde{q}$ and $\tilde{r}$. The proportionality constants are independent of $f$, $\sigma$ and the length of the time interval. 
\end{prop}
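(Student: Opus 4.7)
The plan is to establish Proposition~\ref{pr:Strichartz} via the classical $TT^*$ duality argument of Ginibre--Velo combined with the Hardy--Littlewood--Sobolev inequality, taking the dispersive bound of Proposition~\ref{pr:dispersive} and the $L^2_x$-unitarity of $\sS(t)$ as the only inputs. The admissibility condition $\frac{2}{q}+\frac{1}{r}=\frac{1}{2}$ will appear naturally as the scaling balance that makes the HLS exponent match.

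First I would interpolate (Riesz--Thorin) between the $L^1_x \to L^\infty_x$ dispersive bound $\|\sS(t)f\|_{L^\infty_x}\lesssim |t|^{-1/2}\|f\|_{L^1_x}$ and the isometry $\|\sS(t)f\|_{L^2_x}=\|f\|_{L^2_x}$ to obtain
\[
\|\sS(t)f\|_{L^r_x}\lesssim |t|^{-(1/2-1/r)}\|f\|_{L^{r'}_x},\qquad r\in[2,\infty].
\]
To prove the homogeneous bound \eqref{eq:Strichartz_1}, I would set $Tf(t)=\sS(t)f$ and dualise: the inequality $\|Tf\|_{L^q_tL^r_x}\lesssim \|f\|_{L^2_x}$ is equivalent to $\|T^*g\|_{L^2_x}\lesssim \|g\|_{L^{q'}_tL^{r'}_x}$, which after squaring reduces to controlling the bilinear form
\[
\Big|\iint\langle \sS(t-s)g(s),g(t)\rangle\, ds\, dt\Big|.
\]
Inserting the interpolated dispersive bound above dominates the integrand by $|t-s|^{-(1/2-1/r)}\|g(s)\|_{L^{r'}_x}\|g(t)\|_{L^{r'}_x}$, and then one-dimensional HLS closes the estimate provided $2/q'+(1/2-1/r)=2$, which is precisely the admissibility relation $\frac{2}{q}+\frac{1}{r}=\frac{1}{2}$.

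For the retarded inhomogeneous bound \eqref{eq:Strichartz_2}, I would first establish the untruncated version $\|\int_{\RR}\sS(t-s)\sigma(s)\,ds\|_{L^q_tL^r_x}\lesssim \|\sigma\|_{L^{\tilde q'}_tL^{\tilde r'}_x}$ by writing this operator as $TT^{*}$ (or its mixed analogue when $(q,r)\neq (\tilde q,\tilde r)$) and applying the same dispersive-plus-HLS estimate on the bilinear form. To pass from the untruncated integral $\int_{\RR}$ to the truncated $\int_a^t$ appearing in the Duhamel formula, I would invoke the Christ--Kiselev lemma, which is applicable because all admissible pairs in dimension one satisfy $q>\tilde q'$ strictly (the would-be endpoint $(q,r)=(4,\infty)$ is excluded by the admissibility relation in $d=1$, so no Keel--Tao endpoint analysis is needed here).

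The only mildly delicate point is the independence of the constant on the length of the time interval, as claimed in the proposition: this is automatic from the above proof because every step (interpolation, HLS, Christ--Kiselev) is performed on $\RR$ with constants depending only on the exponents, and the restriction to an interval $\iI$ follows by considering $\sigma\cdot\mathbf{1}_\iI$. Since all the ingredients are standard tools with no hidden analytic subtleties in $d=1$, I do not anticipate a main obstacle; the proof is essentially bookkeeping once one fixes the $TT^*$ and Christ--Kiselev framework.
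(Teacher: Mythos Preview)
Your proposal is correct and is precisely the classical Ginibre--Velo $TT^*$ argument that the cited references (Cazenave, Keel--Tao, Tao) contain; the paper itself does not supply a proof but simply refers to those sources, so there is nothing to compare against. One small inaccuracy: the pair $(4,\infty)$ \emph{does} satisfy the relation $\frac{2}{q}+\frac{1}{r}=\frac{1}{2}$, so it is not excluded by the admissibility relation per se but rather by the usual convention $r<\infty$; this is harmless here since Christ--Kiselev still applies for all non-endpoint pairs (indeed $q\geq 4>4/3\geq \tilde q'$), and the paper only ever uses $(q,r)\in\{(\infty,2),(5,10)\}$.
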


\subsection{Some bounds on the equation}
\label{sec:bounds}

We now give some bounds arising from various parts of the equation \eqref{eq:duhamel_trun_sub}. In what follows, $\sigma$ and $\tilde{\sigma}$ denote processes satisfying different assumptions in various statements. But we should think of them as the solution $\uem$ to \eqref{eq:trun_sub}, or the difference between its two solutions starting from different initial data. Also recall the notations $\xX_{1}(\iI) = L_{t}^{\infty}L_{x}^{2}(\iI)$, $\xX_{2}(\iI) = L_{t}^{5} L_{x}^{10}(\iI)$, and $\xX = \xX_1 \cap \xX_2$. We will always use $\iI$ to denote the interval $[a,b]$ concerned in the contexts below. 

\begin{prop} \label{pr:pre_nonlinear}
	There exists a universal constant $C>0$ such that
	\begin{equation} \label{eq:pre_nonlinear_eps}
	\Big\| \int_{a}^{t} \sS(t-s) \big( \tilde{\sigma}(s) |\sigma(s)|^{4-\eps} \big) {\rm d}s \Big\|_{\xX(\iI)} \leq C (b-a)^{\frac{\eps}{4}} \|\tilde{\sigma}\|_{\xX_{1}(\iI)} \|\sigma\|_{\xX_{2}(\iI)}^{4-\eps}
	\end{equation}
	for all $a < b$ and all $\eps \in [0,1]$. In the case $\eps=0$, we have
	\begin{equation} \label{eq:pre_nonlinear_critical}
	\Big\| \int_{a}^{t} \sS(t-s) \big( \tilde{\sigma}(s) |\sigma(s)|^{4} \big) {\rm d} s\Big\|_{\xX(\iI)} \leq C \|\tilde{\sigma}\|_{\xX_2(\iI)} \|\sigma\|_{\xX_2(\iI)}^{4}, 
	\end{equation}
	where both terms on the right hand side are with $\xX_2$-norms, and the bound is uniform over all $a<b$. 
\end{prop}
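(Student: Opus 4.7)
The plan is to apply the inhomogeneous Strichartz estimate \eqref{eq:Strichartz_2} from Proposition~\ref{pr:Strichartz} together with H\"older's inequality, picking admissible dual pairs $(\tilde{q},\tilde{r})$ that match the integrability of the factor $\tilde{\sigma}|\sigma|^{4-\eps}$. The two target norms $\xX_1 = L^\infty_t L^2_x$ and $\xX_2 = L^5_t L^{10}_x$ correspond to admissible pairs $(\infty,2)$ and $(5,10)$, so \eqref{eq:Strichartz_1}--\eqref{eq:Strichartz_2} handle both at once and reduce each inequality to controlling the source $\tilde{\sigma}|\sigma|^{4-\eps}$ in a single space $L^{\tilde{q}'}_t L^{\tilde{r}'}_x$.

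For the critical bound \eqref{eq:pre_nonlinear_critical}, I would take the dual pair $(\tilde{q},\tilde{r}) = (\infty,2)$, i.e.\ place the nonlinearity in $L^1_t L^2_x$. H\"older in space with $\tfrac{1}{2} = \tfrac{4}{10} + \tfrac{1}{10}$ gives $\||\sigma|^4 \tilde{\sigma}\|_{L^2_x} \leq \|\sigma\|_{L^{10}_x}^{4} \|\tilde{\sigma}\|_{L^{10}_x}$, and H\"older in time with $1 = \tfrac{4}{5} + \tfrac{1}{5}$ yields $\|\,|\sigma|^4 \tilde{\sigma}\,\|_{L^1_t L^2_x} \lesssim \|\sigma\|_{\xX_2}^4 \|\tilde{\sigma}\|_{\xX_2}$, as required. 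Crucially no positive power of $b-a$ can be extracted here, which is exactly the scale-invariant feature of the mass-critical problem.

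For the subcritical bound \eqref{eq:pre_nonlinear_eps}, the strategy is to choose the admissible pair so that $\tilde{\sigma}$ sits in $\xX_1$ (rather than $\xX_2$), leaving spare integrability in time which then produces the $(b-a)^{\eps/4}$ factor. H\"older in space with $\tfrac{1}{\tilde{r}'} = \tfrac{4-\eps}{10} + \tfrac{1}{2}$ forces $\tilde{r}' = \tfrac{10}{9-\eps}$, and the admissibility relation $\tfrac{2}{\tilde{q}} + \tfrac{1}{\tilde{r}} = \tfrac{1}{2}$ then fixes $\tilde{q} = \tfrac{20}{4-\eps}$, so $\tilde{q}' = \tfrac{20}{16+\eps}$. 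The pointwise (in time) estimate $\||\sigma|^{4-\eps}\tilde{\sigma}\|_{L^{\tilde{r}'}_x} \leq \|\sigma\|_{L^{10}_x}^{4-\eps}\|\tilde{\sigma}\|_{L^2_x}$ puts the right-hand side in $L^{5/(4-\eps)}_t$, and since $\tilde{q}' < \tfrac{5}{4-\eps}$, H\"older in time contributes the factor $(b-a)^{1/\tilde{q}' - (4-\eps)/5}$. A direct computation shows this exponent equals exactly $\eps/4$, which matches the claimed bound.

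The proof is pure exponent bookkeeping with no substantive obstacle; the only real content is the choice of $\tilde{r}$ in the subcritical step. The conceptual point worth flagging is that the disappearance of the $(b-a)^{\eps/4}$ factor at $\eps=0$ is precisely the loss of smallness that prevents a naive contraction mapping in the critical case and motivates the truncation-plus-approximation scheme used in the rest of the article.
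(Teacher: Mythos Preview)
Your proof is correct and follows essentially the same approach as the paper: the same dual Strichartz pair $(\tilde{q}',\tilde{r}') = \big(\tfrac{20}{16+\eps},\tfrac{10}{9-\eps}\big)$ for \eqref{eq:pre_nonlinear_eps} and $(\tilde{q}',\tilde{r}')=(1,2)$ for \eqref{eq:pre_nonlinear_critical}, followed by H\"older in space and then in time. The exponent bookkeeping and the resulting factor $(b-a)^{\eps/4}$ match exactly.
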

\begin{proof}
	The pair $(q_\eps', r_\eps') = (\frac{20}{16+\eps}, \frac{10}{9-\eps})$ is dual of the Strichartz pair $(\frac{20}{4-\eps}, \frac{10}{1+\eps})$. So by Strichartz estimate \eqref{eq:Strichartz_2}, we have
	\begin{equation*}
	\Big\| \int_{a}^{t} \sS(t-s) \big( \tilde{\sigma}(s) |\sigma(s)|^{4-\eps} \big) {\rm d}s \Big\|_{\xX(\iI)} \leq C \big\|\; \tilde{\sigma} |\sigma|^{4-\eps}\; \big\|_{L_{t}^{q_{\eps}'}L_{x}^{r_\eps'}(\iI)},  
	\end{equation*}
	Note that this $C$ can be taken independent of $\eps \in [0,1]$. Repeated applications of H\"{o}lder give
	\begin{equation*}
	\big\|\; \tilde{\sigma} |\sigma|^{4-\eps}\; \big\|_{L_{t}^{q_{\eps}'}L_{x}^{r_\eps'}(\iI)} \leq (b-a)^{\frac{\eps}{4}} \|\tilde{\sigma}\|_{\xX_1(\iI)} \|\sigma\|_{\xX_2(\iI)}^{4-\eps}. 
	\end{equation*}
	This proves \eqref{eq:pre_nonlinear_eps}. As for \eqref{eq:pre_nonlinear_critical}, we use Strichartz estimate \eqref{eq:Strichartz_2} with $(\tilde{q}', \tilde{r}') = (1,2)$ to get the bound
	\begin{equation*}
	\Big\| \int_{a}^{t} \sS(t-s) \big( \tilde{\sigma}(s) |\sigma(s)|^{4} \big) {\rm d} s\Big\|_{\xX(\iI)} \leq C \|\tilde{\sigma} \sigma^4 \|_{L_{t}^{1}L_{x}^{2}(\iI)}. 
	\end{equation*}
	The claim then follows from H\"older inequality. 
\end{proof}

In the subcritical situation $\eps>0$, the factor $(b-a)^{\frac{\eps}{4}}$ is crucial for constructing the local solution via contraction. However, to show the convergence of solutions as $\eps \rightarrow 0$, we need to get uniform in $\eps$ estimates. Hence, this factor will be of little use to us. In what follows, we will always use the following bound. 

\begin{cor} \label{cor:pre_nonlinear}
	There exists $C>0$ such that
	\begin{equation*}
	\Big\| \int_{a}^{t} \sS(t-s) \big( \tilde{\sigma}(s) |\sigma(s)|^{4-\eps} \big) {\rm d}s \Big\|_{\xX(\iI)} \leq C \|\tilde{\sigma}\|_{\xX_{1}(\iI)} \|\sigma\|_{\xX_{2}(\iI)}^{4-\eps}
	\end{equation*}
	for all $\iI = [a,b] \subset [0,1]$ and every $\eps \in [0,1]$. 
\end{cor}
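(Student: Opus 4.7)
The statement is a direct consequence of Proposition~\ref{pr:pre_nonlinear}, so the plan is essentially a one-line reduction. Applying \eqref{eq:pre_nonlinear_eps} on the interval $\iI = [a,b]$ gives
\begin{equation*}
\Big\| \int_{a}^{t} \sS(t-s) \big( \tilde{\sigma}(s) |\sigma(s)|^{4-\eps} \big) {\rm d}s \Big\|_{\xX(\iI)} \leq C (b-a)^{\frac{\eps}{4}} \|\tilde{\sigma}\|_{\xX_{1}(\iI)} \|\sigma\|_{\xX_{2}(\iI)}^{4-\eps}
\end{equation*}
uniformly over $\eps \in [0,1]$. Since we restrict to $\iI \subset [0,1]$, we have $0 \leq b-a \leq 1$, and therefore $(b-a)^{\eps/4} \leq 1$ for every $\eps \in [0,1]$. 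This immediately yields the desired bound.

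There is no obstacle here: the corollary merely records the fact that, for applications later in the paper where uniform-in-$\eps$ bounds are needed (and where the factor $(b-a)^{\eps/4}$ degenerates to $1$ as $\eps \to 0$), one can drop this factor at the cost of nothing, as long as the time interval has length at most one. Note that although \eqref{eq:pre_nonlinear_critical} provides a sharper bound at $\eps = 0$ (involving only the $\xX_2$-norm of $\tilde{\sigma}$), the statement of the corollary uses $\xX_1$ on $\tilde{\sigma}$ uniformly in $\eps$, which is the form directly produced by \eqref{eq:pre_nonlinear_eps}.
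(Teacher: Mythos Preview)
Your proof is correct and matches the paper's approach exactly: the corollary is stated as an immediate consequence of Proposition~\ref{pr:pre_nonlinear}, with the only observation being that $(b-a)^{\eps/4} \leq 1$ when $\iI \subset [0,1]$. The paper does not even write out a separate proof.
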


Let $\sigma$ be a process adapted to the filtration $\fF_{t}$. Let
\begin{equation} \label{eq:martingales}
\begin{split}
\mM_{1,a}^{*}(t) &= \sup_{a \leq r_{1} \leq r_{2} \leq t} \Big\| \int_{r_1}^{r_2} \sS(t-s) \sigma(s) {\rm d} W_s \Big\|_{L_{x}^{2}},\\
\mM_{2,a}^{*}(t) &= \sup_{a \leq r_{1} \leq r_{2} \leq t} \Big\| \int_{r_1}^{r_2} \sS(t-s) \sigma(s) {\rm d} W_s \Big\|_{L_{x}^{10}}, 
\end{split}
\end{equation}
where both integrals are in the It\^o sense. We have the following proposition. 

\begin{prop} \label{pr:pre_stochastic}
	For every $\rho \geq 5$, we have the bounds
	\begin{equation*}
	\begin{split}
	\|\mM_{1,a}^{*}\|_{L_{\omega}^{\rho}L_{t}^{\infty}(\iI)} &\leq C_{\rho} (b-a)^{\frac{1}{2}} \|\Phi\|_{\R(L_{x}^{2}, L_{x}^{\infty})} \|\sigma\|_{L_{\omega}^{\rho} \xX_{1}(\iI)};\\
	\|\mM_{2,a}^{*}\|_{L_{\omega}^{\rho}L_{t}^{5}(\iI)} &\leq C_{\rho} (b-a)^{\frac{3}{10}} \|\Phi\|_{\R(L_{x}^{2}, L_{x}^{5/2})} \|\sigma\|_{L_{\omega}^{\rho} \xX_{1}(\iI)}, 
	\end{split}
	\end{equation*}
	where the proportionality constants depend on $\rho$ only. 
\end{prop}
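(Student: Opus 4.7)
The plan is to bound each of the two maximal quantities by reducing it to the supremum of a single Banach-space-valued martingale, applying the Burkholder inequality (Proposition~\ref{pr:Burkholder}), and then controlling the resulting $\gamma$-radonifying norm via the factorisation lemma (Lemma~\ref{le:factorisation}).

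For the first bound, the key observation is that the group property $\sS(t-s) = \sS(t)\sS(-s)$ together with the $L^2$-unitarity of $\sS(t)$ lets one pull the outer $t$-dependence out of the norm. Concretely, setting $N(r) := \int_a^r \sS(-s)\sigma(s)\,\mathrm{d}W_s$, which is an $L^2$-valued martingale not depending on $t$, one obtains
\begin{equation*}
\Bigl\|\int_{r_1}^{r_2}\sS(t-s)\sigma(s)\,\mathrm{d}W_s\Bigr\|_{L^2} = \|N(r_2)-N(r_1)\|_{L^2}.
\end{equation*}
Since $N(a)=0$, the double supremum in $\mM_{1,a}^{*}$ collapses into $2\sup_{r\in \iI}\|N(r)\|_{L^2}$. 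Burkholder with $p=2$ then controls the $L_\omega^\rho$-norm by $\int_a^b \|\sS(-s)\sigma(s)\Phi\|^2_{\R(L^2,L^2)}\,\mathrm{d}s$. The first half of Lemma~\ref{le:factorisation} absorbs $\sS(-s)$ at no cost (it is unitary on $L^2$), and the multiplicative part with $q=2$, $p=\infty$ gives $\|\sigma(s)\Phi\|_{\R(L^2,L^2)} \le \|\sigma(s)\|_{L^2}\|\Phi\|_{\R(L^2,L^\infty)}$. Pulling out the $\xX_1$-norm of $\sigma$ and integrating a constant over $[a,b]$ produces the announced $(b-a)^{1/2}$ factor.

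The second bound is more delicate because $\sS(t)$ is not uniformly bounded on $L^{10}$, so the previous reduction to a $t$-independent martingale is unavailable. Instead, I will freeze $t$ and regard $r\mapsto Z_t(r) := \int_a^r \sS(t-s)\sigma(s)\,\mathrm{d}W_s$ as an $L^{10}$-valued martingale in $r$. The triangle inequality gives $\mM_{2,a}^{*}(t)\le 2\sup_{r\in[a,t]}\|Z_t(r)\|_{L^{10}}$, so Burkholder yields
\begin{equation*}
\EE \sup_{r\in[a,t]}\|Z_t(r)\|_{L^{10}}^\rho \le C_\rho\, \EE \Bigl(\int_a^t \|\sS(t-s)\sigma(s)\Phi\|^2_{\R(L^2,L^{10})}\,\mathrm{d}s\Bigr)^{\rho/2}.
\end{equation*}
To estimate the integrand, I would apply Lemma~\ref{le:factorisation} in two stages. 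First, using $\Phi \in \R(L^2,L^{5/2})$ and $\sigma(s)\in L^2$, the multiplicative form with $\tfrac{1}{2}+\tfrac{2}{5}=\tfrac{9}{10}$ yields $\sigma(s)\Phi \in \R(L^2,L^{10/9})$ with norm at most $\|\sigma(s)\|_{L^2}\|\Phi\|_{\R(L^2,L^{5/2})}$. Second, since Proposition~\ref{pr:dispersive} gives $\|\sS(t-s)\|_{\LLL(L^{10/9},L^{10})} \lesssim (t-s)^{-2/5}$, composition furnishes
\begin{equation*}
\|\sS(t-s)\sigma(s)\Phi\|_{\R(L^2,L^{10})} \lesssim (t-s)^{-2/5}\|\sigma(s)\|_{L^2}\|\Phi\|_{\R(L^2,L^{5/2})}.
\end{equation*}
Squaring, pulling the $\xX_1$-norm of $\sigma$ outside the $s$-integral, and integrating the locally-integrable singularity $\int_a^t (t-s)^{-4/5}\,\mathrm{d}s \lesssim (t-a)^{1/5}$ gives $\|\mM_{2,a}^{*}(t)\|_{L_\omega^\rho} \lesssim (t-a)^{1/10}\|\Phi\|_{\R(L^2,L^{5/2})}\|\sigma\|_{L_\omega^\rho \xX_1(\iI)}$. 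Finally, Minkowski's inequality (valid because $\rho\ge 5$) exchanges $L_\omega^\rho$ and $L_t^5$, and the deterministic integration $\int_a^b (t-a)^{1/2}\,\mathrm{d}t \asymp (b-a)^{3/2}$ raised to the $1/5$-power delivers the advertised $(b-a)^{3/10}$.

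The main difficulty is the second estimate. Because $\sS(t)$ is not uniformly bounded on $L^{10}$, one cannot factor the propagator outside the norm as in the first bound; instead one must freeze $t$ before applying Burkholder and pay for $\sS(t-s)$ via dispersive decay. The integrability in $s$ of the resulting singularity $(t-s)^{-4/5}$ is sharp in $d=1$, and the intermediate space $L^{10/9}$ used in the factorisation is forced by the need to land $\Phi$ in $L^{5/2}$.
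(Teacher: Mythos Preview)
Your proposal is correct and follows essentially the same route as the paper. The only cosmetic difference is in the final step for $\mM_{2,a}^{*}$: the paper uses H\"older to bound $\|\mM_{2,a}^{*}\|_{L_t^5 L_\omega^\rho} \le (b-a)^{1/5}\sup_t \|\mM_{2,a}^{*}(t)\|_{L_\omega^\rho}$ and then bounds the supremum uniformly by $(b-a)^{1/10}$, whereas you keep the $t$-dependent bound $(t-a)^{1/10}$ and integrate $\int_a^b (t-a)^{1/2}\,\mathrm{d}t$ directly; both yield $(b-a)^{3/10}$.
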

\begin{proof}
	We first treat $\mM_{1,a}^{*}$. By semigroup and unitary properties of $\sS$, we have
	\begin{equation*}
	\mM_{1,a}^{*}(t) = \sup_{a \leq r_{1} \leq r_{2} \leq t} \Big\| \int_{r_1}^{r_2} \sS(-s) \sigma(s) {\rm d} W_{s} \Big\|_{L_{x}^{2}} \leq 2 \sup_{a \leq r \leq b} \Big\| \int_{a}^{r} \sS(-s) \sigma(s) {\rm d} W_{s} \Big\|_{L_{x}^{2}}. 
	\end{equation*}
	The right hand side above does not depend on $t$, so we have
	\begin{equation*}
	\|\mM_{1,a}^{*}\|_{L_{t}^{\infty}(\iI)} \leq 2 \sup_{a \leq r \leq b} \Big\| \int_{a}^{r} \sS(-s) \sigma(s) {\rm d} W_{s} \Big\|_{L_{x}^{2}}. 
	\end{equation*}
	Now, since the process $s \mapsto \sS(-s) \sigma(s)$ is adapted to $\FFF_t$, we can apply Burkholder inequality in Proposition~\ref{pr:Burkholder} (with $\sS(-s) \sigma(s)$ replacing $\sigma(s)$) to get
	\begin{equation*}
	\EE \|\mM_{1,a}^{*}\|_{L_{t}^{\infty}(\iI)}^{\rho} \leq C_{\rho} \EE \Big( \int_{a}^{b} \|\sS(-s) \sigma(s) \Phi\|_{\R(L_{x}^{2}, L_{x}^{2})}^{2} {\rm d}s \Big)^{\frac{\rho}{2}}. 
	\end{equation*}
	Using again the unitary property of $\sS(-s)$ and Lemma \ref{le:factorisation}, we have
	\begin{equation*}
	\|\sS(-s) \sigma(s) \Phi\|_{\R(L_{x}^{2}, L_{x}^{2})} \leq \|\sS(-s) \sigma(s)\|_{\LLL(L_{x}^{\infty}, L_{x}^{2})} \|\Phi\|_{\R(L_{x}^{2}, L_{x}^{\infty})} \leq \|\sigma(s)\|_{L_{x}^{2}} \|\Phi\|_{\R(L_{x}^{2}, L_{x}^{\infty})}. 
	\end{equation*}
	Plugging it back to the above bound for $\EE \|M_{1,a}^{*}\|_{L_{t}^{\infty}}^{\rho}$ and applying H\"{o}lder, we get
	\begin{equation*}
	\EE \|\mM_{1,a}^{*}\|_{L_{t}^{\infty}(\iI)}^{\rho} \leq C_{\rho} (b-a)^{\frac{\rho}{2}} \|\Phi\|_{\R(L_{x}^{2}, L_{x}^{\infty})}^{\rho} \EE \|\sigma\|_{\xX_{1}(\iI)}^{\rho}. 
	\end{equation*}
	Taking $\rho$-th root on both sides gives the desired bound for $\mM_{1,a}^{*}$. As for $\mM_{2,a}^{*}$, since $\rho \geq 5$, we can use Minkowski to change the order of integration and then apply H\"{o}lder so that
	\begin{equation} \label{eq:minkowski_exchange}
	\|\mM_{2,a}^{*}\|_{L_{\omega}^{\rho} L_{t}^{5}(\iI)} \leq \|\mM_{2,a}^{*}\|_{L_{t}^{5}(\iI, L_{\omega}^{\rho})} \leq (b-a)^{\frac{1}{5}} \sup_{t \in [a,b]} \|\mM_{2,a}^{*}(t)\|_{L_{\omega}^{\rho}}. 
	\end{equation}
	Since
	\begin{equation*}
	\mM_{2,a}^{*}(t) \leq 2 \sup_{0 \leq \tau \leq t} \Big\| \int_{a}^{\tau} \sS(t-s) \sigma(s) {\rm d} W_{s} \Big\|_{L_{x}^{10}}, 
	\end{equation*}
	we use Burkholder inequality to get
	\begin{equation*}
	\|\mM_{2,a}^{*}(t)\|_{L_{\omega}^{\rho}}^{\rho} \leq C_{\rho} \EE \Big( \int_{a}^{t} \|\sS(t-s) \sigma(s) \Phi\|_{\R(L_{x}^{2}, L_{x}^{10})}^{2} {\rm d}s \Big)^{\frac{\rho}{2}}. 
	\end{equation*}
	Now, applying the dispersive estimate \eqref{eq:dispersive} and Lemma \ref{le:factorisation} to the integrand, we get
	\begin{equation*}
	\begin{split}
	\|\sS(t-s) \sigma(s) \Phi\|_{\R(L_{x}^{2}, L_{x}^{10})} &\leq \|\sS(t-s) \sigma(s)\|_{\LLL(L_{x}^{5/2}, L_{x}^{10})} \|\Phi\|_{\R(L_{x}^{2}, L_{x}^{5/2})}\\ 
	&\leq C (t-s)^{-\frac{2}{5}} \|\sigma(s)\|_{L_{x}^{2}} \|\Phi\|_{\R(L_{x}^{2}, L_{x}^{5/2})}. 
	\end{split}
	\end{equation*}
	Substituting it back into the bound for $\|\mM_{2,a}^{*}(t)\|_{L_{\omega}^{\rho}}^{\rho}$, we get
	\begin{equation*}
	\|\mM_{2,a}^{*}(t)\|_{L_{\omega}^{\rho}}^{\rho} \leq C_{\rho} (b-a)^{\frac{\rho}{10}} \|\Phi\|_{\R(L_{x}^{2}, L_{x}^{5/2})}^{\rho} \EE \|\sigma\|_{\xX_{1}(\iI)}^{\rho}. 
	\end{equation*}
	Note that the right hand side above does not depend on $t$. So taking the $\rho$-th root on both sides and then supremum over $t \in [a,b]$, and combining it with \eqref{eq:minkowski_exchange}, we obtain the desired control for $\mM_{2,a}^{*}$. 
\end{proof}

\begin{rmk}
	The exact value of the exponent of $(b-a)$ is not very important. However, it is crucial that the exponent is strictly positive. This will enable us to absorb the term on the right hand side of the equation to the left hand side with a short time period. The same is true for the bounds in Proposition~\ref{pr:pre_correction} below. 
\end{rmk}

\begin{rmk}
	For both of the bounds in Proposition~\ref{pr:pre_stochastic}, one can slightly modify the argument to control the left hand side by the $L_{\omega}^{\rho} \xX_{2}(\iI)$ norm. We choose the $L^2$ based norm $\xX_1$ for convenience of use later. Also Assumption~\ref{as:Phi} ensures that all the $\gamma$-radonifying norms of $\Phi$ appearing above are finite. 
\end{rmk}

We finally give bounds on the correction term. 

\begin{prop} \label{pr:pre_correction}
	There exists $C>0$ such that
	\begin{equation*}
	\begin{split}
	\Big\|\int_{a}^{t} \sS(t-s) \big( F_{\Phi} \sigma(s) \big) {\rm d}s \Big\|_{\xX_{1}(\iI)} &\leq C (b-a) \|F_{\Phi}\|_{L_{x}^{\infty}} \|\sigma\|_{\xX_{1}(\iI)};\\
	\Big\|\int_{a}^{t} \sS(t-s) \big( F_{\Phi} \sigma(s) \big) {\rm d}s \Big\|_{\xX_{2}(\iI)} &\leq C (b-a)^{\frac{4}{5}} \|F_{\Phi}\|_{L_{x}^{5/2}} \|\sigma\|_{\xX_{1}(\iI)}
	\end{split}
	\end{equation*}
	for all $\sigma \in \xX_1(\iI)$. 
\end{prop}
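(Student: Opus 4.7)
The plan is to treat the two bounds separately. The $\xX_1$-bound is essentially energy estimate; the $\xX_2$-bound requires genuine use of the one-dimensional dispersive decay.

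For the $\xX_1 = L_t^\infty L_x^2$ bound, I would use Minkowski's inequality and the fact that $\sS(t-s)$ is unitary on $L_x^2$ to write, for any $t \in [a,b]$,
\begin{equation*}
\Big\| \int_{a}^{t} \sS(t-s)\big( F_{\Phi}\sigma(s)\big)\, {\rm d}s \Big\|_{L_{x}^{2}} \leq \int_{a}^{t} \|F_{\Phi}\sigma(s)\|_{L_{x}^{2}}\, {\rm d}s \leq \|F_{\Phi}\|_{L_{x}^{\infty}} \int_{a}^{t} \|\sigma(s)\|_{L_{x}^{2}}\, {\rm d}s,
\end{equation*}
and bounding the last integral by $(b-a)\|\sigma\|_{\xX_{1}(\iI)}$ gives the first estimate after taking supremum in $t$.

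For the $\xX_2 = L_t^5 L_x^{10}$ bound, the idea is to apply the dispersive estimate \eqref{eq:dispersive} with $p = 10/9$ (so that $p' = 10$ and the decay exponent equals $\frac{1}{p}-\frac{1}{2} = \frac{2}{5}$), combined with H\"older in space to absorb $F_{\Phi}$. Concretely, since $\frac{1}{5/2} + \frac{1}{2} = \frac{9}{10}$, H\"older yields $\|F_{\Phi}\sigma(s)\|_{L_{x}^{10/9}} \leq \|F_{\Phi}\|_{L_{x}^{5/2}}\|\sigma(s)\|_{L_{x}^{2}}$, and then \eqref{eq:dispersive} gives
\begin{equation*}
\Big\| \int_{a}^{t} \sS(t-s)\big( F_{\Phi}\sigma(s)\big)\, {\rm d}s \Big\|_{L_{x}^{10}} \leq C \|F_{\Phi}\|_{L_{x}^{5/2}} \|\sigma\|_{\xX_{1}(\iI)} \int_{a}^{t} (t-s)^{-\frac{2}{5}}\, {\rm d}s.
\end{equation*}
The time integral equals $\frac{5}{3}(t-a)^{3/5}$, which is bounded by a multiple of $(b-a)^{3/5}$. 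Finally, taking $L_{t}^{5}(\iI)$-norm picks up an extra factor of $(b-a)^{1/5}$, producing the total factor $(b-a)^{4/5}$ claimed in the statement.

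I do not anticipate any real obstacle: both estimates are purely linear and rest on Minkowski, H\"older and the dispersive bound already recorded as Proposition~\ref{pr:dispersive}; the only point to watch is the bookkeeping of H\"older exponents that forces the space $L_{x}^{5/2}$ for $F_{\Phi}$ in the second bound, and the choice $p=10/9$ which is the unique dual exponent compatible with $L^{10}$-output and a time-integrable singularity $(t-s)^{-2/5}$ on an interval.
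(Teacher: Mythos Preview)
Your proposal is correct and follows essentially the same route as the paper: unitarity of $\sS$ plus H\"older for the $\xX_1$-bound, and the dispersive estimate with $p=10/9$ together with the H\"older splitting $\|F_{\Phi}\sigma\|_{L_x^{10/9}}\le\|F_{\Phi}\|_{L_x^{5/2}}\|\sigma\|_{L_x^2}$ for the $\xX_2$-bound, picking up $(b-a)^{3/5}$ from the time integral and an extra $(b-a)^{1/5}$ from the $L_t^5$-norm. There is no meaningful difference between your argument and the paper's.
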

\begin{proof}
	We first look at $\xX_{1}$-norm of $A$. By unitary property of $\sS$ and H\"{o}lder, we have
	\begin{equation*}
	\Big\|\int_{a}^{t} \sS(t-s) \big( F_{\Phi} \sigma(s) \big) {\rm d}s \Big\|_{L_{x}^{2}} \leq \int_{a}^{t} \|F_{\Phi} \sigma(s)\|_{L_{x}^{2}} {\rm d}s \leq (b-a) \|F_{\Phi}\|_{L_{x}^{\infty}} \|\sigma\|_{\xX_1(\iI)}. 
	\end{equation*}
	The right hand side above does not depend on $t$, so we have proved the first bound. As for the one involving the $\xX_{2}$-norm, by dispersive estimate \eqref{eq:dispersive} and H\"{o}lder, we have
	\begin{equation*}
	\begin{split}
	\Big\|\int_{a}^{t} \sS(t-s) \big( F_{\Phi} \sigma(s) \big) {\rm d}s\Big\|_{L_{x}^{10}} &\leq C \int_{a}^{t} (t-s)^{-\frac{2}{5}} \|F_{\Phi} \sigma(s)\|_{L_{x}^{10/9}} {\rm d}s\\
	&\leq C (b-a)^{\frac{3}{5}} \|F_{\Phi}\|_{L_{x}^{5/2}} \|\sigma\|_{\xX_1(\iI)}.
	\end{split}
	\end{equation*}
	where we have integrated $s$ out and replaced $t-a$ by $b-a$ as an upper bound. Note that the bound above does not depend on $t$. Taking $L_{t}^{5}(\iI)$-norm then immediately gives
	\begin{equation*}
	\Big\|\int_{a}^{t} \sS(t-s) \big( F_{\Phi} \sigma(s) \big) {\rm d}s\Big\|_{\xX_{2}(\iI)} \leq C (b-a)^{\frac{4}{5}} \|F_{\Phi}\|_{L_{x}^{5/2}} \|\sigma\|_{\xX_{1}(\iI)}. 
	\end{equation*}
	This completes the proof of the proposition. 
\end{proof}

\section{Convergence in $\eps$ -- proof of Proposition~\ref{pr:converge_e}}
\label{sec:converge_e}

The aim of this section is to prove Proposition~\ref{pr:converge_e}. We will show that for every $m$, the sequence of solutions $\{u_{m,\eps}\}_{\eps>0}$ is Cauchy in $L_{\omega}^{\rho_0} \xX(0,1)$, and that the limit $u_{m}$ satisfies the corresponding Duhamel's formula. All estimates below are uniform in $\eps$ but depend on $\rho_0$, $m$ and $M$. 

Since we fix the operator $\Phi$ throughout the article, as long as the norm concerned is finite, we omit the dependence of the bounds on $\|\Phi\|$ for simplicity. We also omit the dependence on $\rho_0 \geq 5$ since it is also fixed. On the other hand, to construct a solution in $L_{\omega}^{\rho_0}$, we will need certain bounds in a higher integrability space $L_{\omega}^{\rho}$ for $\rho > \rho_0$. We will point out the dependence on this larger $\rho$ when it appears below. 

\subsection{Overview of the proof}

The proof consists of several ingredients, all of which use bootstrap argument over smaller subintervals of $[0,1]$. In order to show $\{u_{m,\eps}\}_{\eps}$ is Cauchy, we need to control the difference $u_{m,\eps_1} - u_{m,\eps_2}$ for small $\eps_1$ and $\eps_2$ over those subintervals and then iterate. Even though the two processes start with the same initial data, they start to differ instantly after the evolution begins. Hence, in order to be able to iterate over subintervals, we need the solution to be stable under perturbation of initial data. This is the following proposition.

\begin{prop}[Uniform stability in $\eps$] \label{pr:uniform_stable}
	Let $M>0$ be arbitrary. Let $\uem$ and $\vem$ denote the solutions to \eqref{eq:trun_sub} with initial datum $u_0$ and $v_0$ respectively. Suppose
	\begin{equation*}
	\|u_0\|_{L_{\omega}^{\infty}L_{x}^{2}} \leq M\;, \qquad \|v_0\|_{L_{\omega}^{\infty}L_{x}^{2}} \leq M. 
	\end{equation*}
	Then we have the bound
	\begin{equation*}
	\|u_{m,\eps}-v_{m,\eps}\|_{L_{\omega}^{\rho_0} \xX(0,1)} < C \|u_0 - v_0\|_{L_{\omega}^{\rho_0} L_{x}^{2}}
	\end{equation*}
	for some constant $C$ depending on $\rho_0$, $m$ and $M$ only. 
\end{prop}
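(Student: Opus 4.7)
The plan is to write $w := u_{m,\eps} - v_{m,\eps}$ and $w_0 := u_0 - v_0$, and to estimate $w$ using the Duhamel formula obtained by subtracting the two equations for $u_{m,\eps}$ and $v_{m,\eps}$:
\begin{equation*}
w(t) = \sS(t) w_0 - i\!\int_0^t\! \sS(t-s) D(s)\,ds - i\!\int_0^t\! \sS(t-s) w(s)\,dW_s - \frac{1}{2}\!\int_0^t\! \sS(t-s) F_\Phi w(s)\,ds,
\end{equation*}
where $D(s) = \theta_m(\|u_{m,\eps}\|^5_{\xX_2(0,s)}) \nN^\eps(u_{m,\eps})(s) - \theta_m(\|v_{m,\eps}\|^5_{\xX_2(0,s)}) \nN^\eps(v_{m,\eps})(s)$. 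The goal is to close a bootstrap for $\|w\|_{L^{\rho_0}_\omega \xX}$ on small subintervals and then iterate over a partition of $[0,1]$.

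As a preliminary step, I would establish uniform-in-$\eps$ a priori bounds $\|u_{m,\eps}\|_{L^\rho_\omega \xX(0,1)} + \|v_{m,\eps}\|_{L^\rho_\omega \xX(0,1)} \leq B(\rho, m, M)$ for every $\rho \geq 1$. This follows by a similar bootstrap exploiting the fact that $\theta_m(\|u_{m,\eps}\|^5_{\xX_2(0,s)}) \neq 0$ forces the pathwise bound $\|u_{m,\eps}\|_{\xX_2(0,s)} \leq (2m)^{1/5}$, combined with the short-time smallness from Propositions~\ref{pr:pre_stochastic} and~\ref{pr:pre_correction}. The triangle inequality then controls $w$ in every $L^\rho_\omega \xX(0,1)$.

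Next, on a subinterval $[a,b] \subset [0,1]$, I would split $D = D_1 + D_2$ with
\begin{equation*}
D_1(s) = \theta_m(\|u_{m,\eps}\|^5_{\xX_2(0,s)}) [\nN^\eps(u_{m,\eps}) - \nN^\eps(v_{m,\eps})](s),
\end{equation*}
\begin{equation*}
D_2(s) = [\theta_m(\|u_{m,\eps}\|^5_{\xX_2(0,s)}) - \theta_m(\|v_{m,\eps}\|^5_{\xX_2(0,s)})] \nN^\eps(v_{m,\eps})(s).
\end{equation*}
The term $D_1$ is bounded via the pointwise estimate $|\nN^\eps(u) - \nN^\eps(v)| \lesssim (|u|^{4-\eps} + |v|^{4-\eps})|w|$ together with Strichartz (Proposition~\ref{pr:pre_nonlinear} or Corollary~\ref{cor:pre_nonlinear}); for $D_2$, the Lipschitz continuity of $\theta_m$ with constant $\mathcal{O}(1/m)$ combined with
\begin{equation*}
\bigl|\|u\|^5_{\xX_2(0,s)} - \|v\|^5_{\xX_2(0,s)}\bigr| \lesssim \bigl(\|u\|^4_{\xX_2(0,s)} + \|v\|^4_{\xX_2(0,s)}\bigr) \|w\|_{\xX_2(0,s)}
\end{equation*}
produces a bound involving the a priori-controlled $v_{m,\eps}$ and $w$. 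The stochastic and correction contributions are controlled by Propositions~\ref{pr:pre_stochastic} and~\ref{pr:pre_correction} with $\sigma = w$. H\"older in $\omega$ is used to isolate high-moment factors of $u_{m,\eps}, v_{m,\eps}$ (absorbable into the a priori bound) from $\|w\|_{L^{\rho_0}_\omega \xX}$.

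The main obstacle is securing $\eps$-uniform smallness of the nonlinear coefficient: the subcritical bound in Proposition~\ref{pr:pre_nonlinear} provides only the factor $\tau^{\eps/4}$, which degenerates to $1$ as $\eps \to 0$, so cannot by itself force the bootstrap to close with an $\eps$-independent constant. To circumvent this, I would use the pointwise estimate $|u|^{4-\eps} \leq |u|^4 + 1$ to split the nonlinearity into a critical piece --- handled by the $\eps$-independent second estimate in Proposition~\ref{pr:pre_nonlinear}, yielding a factor $\|u\|_{\xX_2(a,b)}^4$ --- plus a lower-order linear piece with factor $\tau$. To render the critical factor absorbable, I would then partition $[0,1]$ by stopping times $0 = \tau_0 < \cdots < \tau_N = 1$ chosen so that $\|u_{m,\eps}\|^5_{\xX_2(\tau_k, \tau_{k+1})} + \|v_{m,\eps}\|^5_{\xX_2(\tau_k, \tau_{k+1})} \leq \eta$ with $\eta = \eta(m,M)$ small. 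The truncation pathwise bounds $N \leq 4m/\eta$, and a further subdivision of each slab into deterministic pieces of length $\tau = \tau(m,M,\rho_0)$ provides the short-time factor needed for the stochastic and correction contributions. Iterating the closed bound $\|w\|_{L^{\rho_0}_\omega \xX(\tau_k, \tau_{k+1})} \leq 2C \|w(\tau_k)\|_{L^{\rho_0}_\omega L^2}$ across this finite deterministic-random partition yields the stability constant depending only on $\rho_0, m, M$.
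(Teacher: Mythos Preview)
Your core strategy---a random partition by stopping times so that the $\xX_2$-increments are small, with the number of pieces bounded via the truncation---matches the paper's. However, the order in which you nest the two partitions is inverted, and this creates a genuine obstruction; several other steps you propose are also unnecessary.

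The paper first fixes a \emph{deterministic} subinterval $[a,b]$ of length $h$, and only \emph{inside} $[a,b]$ performs the random $\xX_2$-dissection $\{\tau_k\}$. On each random slab $\iI_{k+1}=[\tau_k,\tau_{k+1}]$ the argument is entirely pathwise: the nonlinear coefficient is absorbable because $\|u_{m,\eps}\|_{\xX_2(\iI_{k+1})}^{4-\eps}\leq\eta^{(4-\eps)/5}$, and the stochastic integral is simply dominated by the supremum quantity $\mM_{a,b}^*$ taken over the full \emph{deterministic} interval $[a,b]$. One iterates pathwise over the $K\leq 2(1+4m/\eta)$ random slabs to produce a pathwise bound on all of $[a,b]$, and only then takes the $L_\omega^{\rho_0}$ norm, at which point $\mM_{a,b}^*$ is controlled by Proposition~\ref{pr:pre_stochastic} with the small factor $h^{3/10}$. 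Your ordering---random $\tau_k$ outer, then ``deterministic pieces of length $\tau$'' inner---does not work as stated: you cannot deterministically subdivide a random interval, and closing the stochastic estimate in $L_\omega^{\rho_0}$ directly on a random slab $[\tau_k,\tau_{k+1}]$ fails because its length is not controlled.

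Three further simplifications relative to your sketch: (i) no a priori bound $\|u_{m,\eps}\|_{L_\omega^\rho\xX}\leq B$ is needed, since mass conservation gives $\|u_{m,\eps}\|_{\xX_1}\leq M$ pathwise and the truncation plus dissection give $\|u_{m,\eps}\|_{\xX_2(0,\tau_{k+1})}\leq(2m)^{1/5}+\eta^{1/5}$ pathwise on the set where the nonlinearity is active; (ii) consequently no H\"older in $\omega$ is needed to separate coefficients from $w$---every factor multiplying $w$ is bounded pathwise by constants depending on $m,M$; (iii) the splitting $|u|^{4-\eps}\leq|u|^4+1$ is unnecessary, because for $\eta<1$ and $\eps\in(0,1)$ one already has $\eta^{(4-\eps)/5}\leq\eta^{3/5}$, which is small uniformly in $\eps$.
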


The above proposition compares two solutions to the same equation with different initial data. On the other hand, in order to show $\{u_{m,\eps}\}$ is Cauchy in $\eps$, we need to compare $\nN^{\eps_1}(u_{m,\eps_1})$ and $\nN^{\eps_2}(u_{m,\eps_2})$ for $\eps_1 \neq \eps_2$. Since the two nonlinearities carry different powers, we need a priori bound on $L_{x}^{\infty}$ norm of $u_{m,\eps_j}(t)$ to get effective control on their difference. This requires the initial data to be in a more regular space than $L_{x}^{2}$. Hence, we make a small perturbation of initial data to $H_{x}^{1}$. The following proposition guarantees that the solution will still be in $H_{x}^{1}$ up to time $1$.

\begin{prop}[Persistence of regularity]
	\label{pr:persistence}
	Let $\rho \geq 5$ and $v_0 \in L_{\omega}^{\rho}H_{x}^{1}$. Then for every $m,\eps>0$, there exists $v_{m,\eps} \in L_{\omega}^{\rho}\xX^{1}(0,1)$ such that \eqref{eq:duhamel_trun_sub} holds with initial data $v_0$ and in the same space. Furthermore, we have the bound
	\begin{equation*}
	\|v_{m,\eps}\|_{L_{\omega}^{\rho}\xX^{1}(0,1)} \leq C \|v_0\|_{L_{\omega}^{\rho}H_{x}^{1}}, 
	\end{equation*}
	where $C$ depends on $\rho$ and $m$ only. 
\end{prop}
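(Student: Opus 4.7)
The strategy is to redo the fixed-point argument of Theorem~\ref{th:global_trun} in the higher-regularity space $L_\omega^\rho \xX^1(0,T_0)$, with $T_0 = T_0(m,\eps,\rho,\|v_0\|_{L_\omega^\rho H_x^1})$ small, and then iterate over $\lceil 1/T_0 \rceil$ subintervals to cover $[0,1]$. Concretely, I would show that the Duhamel map $\Gamma$ associated with \eqref{eq:duhamel_trun_sub} is a contraction on a ball in $L_\omega^\rho \xX^1(0,T_0)$. This simultaneously delivers existence of $v_{m,\eps}$ in $\xX^1$ and the desired bound on $[0,T_0]$; iterating (with $\d_x v_{m,\eps}(kT_0)$ replacing $\d_x v_0$ at each step) then promotes the bound to $[0,1]$ with a constant depending on the finite number of steps, hence only on $m$, $\eps$, and $\rho$.

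The core is to estimate each of the four terms of $\Gamma$ in $L_\omega^\rho \xX^1$. Since $\d_x$ commutes with $\sS$, the linear term is controlled by Proposition~\ref{pr:Strichartz} as $C\|v_0\|_{L_\omega^\rho H_x^1}$. For the nonlinear term, $\theta_m(\|v\|_{\xX_2(0,s)}^5)$ is spatially constant and $|\d_x \nN^\eps(v)| \leq C|v|^{4-\eps}|\d_x v|$. Setting $\tilde v(s,x) := \theta_m(\|v\|_{\xX_2(0,s)}^5)^{1/(4-\eps)} v(s,x)$ and using that $\theta_m$ vanishes once $\|v\|_{\xX_2(0,s)}^5 \geq 2m$ to get $\|\tilde v\|_{\xX_2(0,T_0)} \leq (2m)^{1/5}$, Proposition~\ref{pr:pre_nonlinear} applied with $\tilde\sigma = \d_x v$ and $\sigma = \tilde v$ yields a contribution of order $T_0^{\eps/4} m^{(4-\eps)/5}\|\d_x v\|_{\xX_1}$. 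For the stochastic term, differentiating under the integral gives
\begin{equation*}
\d_x \int_0^t \sS(t-s) v(s)\, dW_s = \int_0^t \sS(t-s)(\d_x v)(s)\, dW_s + \int_0^t \sS(t-s) v(s)\, d(\Phi_1 \tilde W)_s,
\end{equation*}
where $\Phi_1 := \d_x \circ \Phi$ is again $\gamma$-radonifying into $L_x^\infty$ and $L_x^{5/2}$ by Assumption~\ref{as:Phi} (for $M \geq 2$); each summand is then handled by Proposition~\ref{pr:pre_stochastic}. The It\^o--Stratonovich correction splits analogously via $\d_x(F_\Phi v) = (\d_x F_\Phi)v + F_\Phi \d_x v$, controlled by Proposition~\ref{pr:pre_correction} together with $\|F_\Phi\|_{W^{1,\infty}} < \infty$.

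Collecting these bounds produces an estimate of the form
\begin{equation*}
\|\Gamma(w)\|_{L_\omega^\rho \xX^1(0,T_0)} \leq C\|v_0\|_{L_\omega^\rho H_x^1} + \big(T_0^{\eps/4} m^{(4-\eps)/5} + T_0^{3/10} + T_0^{4/5}\big)\|w\|_{L_\omega^\rho \xX^1(0,T_0)},
\end{equation*}
with a parallel estimate for the Lipschitz constant. Taking $T_0$ so small that the bracketed factor is at most $1/2$ makes $\Gamma$ a contraction, furnishing the fixed point; the iteration then closes the proof on $[0,1]$. The main hurdles are (i) the rigorous commutation of $\d_x$ with the stochastic convolution, which is standard once one expands in the basis $\{\Phi e_k\}$ and uses the spatial smoothness of $\Phi$, and (ii) the polynomial blow-up of the constant in $m$, which is unavoidable at this regularity and is precisely why the uniform-in-$m$ bound of Proposition~\ref{pr:uniform_m} is established by a completely different argument later.
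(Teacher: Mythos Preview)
Your argument has a genuine gap: the bound you obtain depends on $\eps$, whereas the proposition requires the constant $C$ to depend on $\rho$ and $m$ \emph{only}. This uniformity is not cosmetic --- it is exactly what is used in the proof of Proposition~\ref{pr:convergence_high} to control $\Pr(\Omega_\Lambda^c)$ uniformly in $\eps_1,\eps_2$, and without it the convergence argument collapses. The dependence on $\eps$ enters through your choice of $T_0$: to make the bracketed factor $T_0^{\eps/4} m^{(4-\eps)/5}$ at most $\tfrac{1}{6}$ you need $T_0 \le \big(6 m^{(4-\eps)/5}\big)^{-4/\eps}$, which tends to $0$ as $\eps \to 0$ for any fixed $m$. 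Consequently the number of iteration steps $\lceil 1/T_0\rceil$, and hence your final constant $(2C)^{\lceil 1/T_0\rceil}$, blows up as $\eps \to 0$.

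The paper circumvents this by decoupling local existence from the a~priori bound. The local existence (Lemma~\ref{le:H1_local}) is obtained by a contraction whose small time $R_0$ depends on $\|v_0\|_{L_\omega^\rho H_x^1}$ but \emph{not} on $\eps$. The uniform-in-$\eps$ bound comes from a separate a~priori estimate (Lemma~\ref{le:persistence_short}): given the solution on $[0,R]$, one takes a \emph{random pathwise} dissection $\{\tau_k\}$ of each short interval $[a,b]$ defined by the condition $\|v_{m,\eps}\|_{\xX_2(\tau_k,\tau_{k+1})}^5 \le \eta$. On such a subinterval the nonlinearity contributes $C\eta^{(4-\eps)/5}\|v_{m,\eps}\|_{\xX^1(\iI_{k+1})}$, and since $\eta^{(4-\eps)/5}\le \eta^{3/5}$ for all $\eps\in(0,1)$, the smallness is achieved by choosing $\eta$ universally, not through a power of the time increment. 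The truncation then caps the number of such subintervals by $1+2m/\eta$, independent of $\eps$. This yields the bound $\|v_{m,\eps}\|_{L_\omega^\rho\xX^1(0,R)}\le C_{\rho,m}^{1+R}\|v_0\|_{L_\omega^\rho H_x^1}$ on any interval where the solution already exists, which then allows the local existence to be iterated with a \emph{fixed} step size determined by $C_{\rho,m}\|v_0\|_{L_\omega^\rho H_x^1}$.
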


\begin{rmk}
	Note that the proposition is stated with arbitrary $\rho \geq 5$, not just our fixed $\rho_0$. This is because later when we show the convergence in $L_{\omega}^{\rho_0}\xX(0,1)$, we actually need a higher integrability than $\rho_0$ (see Proposition~\ref{pr:convergence_high} below). 
\end{rmk}

\begin{rmk}
Propositions~\ref{pr:uniform_stable} and~\ref{pr:persistence} can be viewed as natural generalisations of \cite[Lemma~3.10, 3.12]{colliander2008global}. 
\end{rmk}

Thanks to the persistence of regularity, we can show the convergence of the solutions when starting from $L_{\omega}^{\infty} H_{x}^{1}$ initial data. This is the following proposition. 

\begin{prop} [Convergence with regular initial data]
	\label{pr:convergence_high}
	Let $v_{m,\eps}$ denotes the solutions to \eqref{eq:trun_sub} with initial data $v_0 \in L_{\omega}^{\infty}H_{x}^{1}$. Then for every $m>0$,  $\{v_{m,\eps}\}_{\eps}$ is Cauchy in $L_{\omega}^{\rho_0} \xX(0,1)$. 
\end{prop}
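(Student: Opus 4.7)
The strategy will mirror the uniform-stability argument of Proposition~\ref{pr:uniform_stable}, with the smallness now coming from $|\eps_1-\eps_2|$ rather than from a small distance between initial data. The enabling observation is that, since $v_0 \in L_\omega^\infty H_x^1$, Proposition~\ref{pr:persistence} applied at any integrability $\rho\gg\rho_0$ yields a uniform-in-$\eps$ bound $\|v_{m,\eps}\|_{L_\omega^\rho \xX^1(0,1)} \leq C_{m,\rho}\|v_0\|_{L_\omega^\rho H_x^1}$, which via the one-dimensional Sobolev embedding $H_x^1\hookrightarrow L_x^\infty$ upgrades to a uniform bound $\|v_{m,\eps}\|_{L_\omega^\rho L_t^\infty L_x^\infty}\leq K_{m,v_0}$; this is what will make the exponent difference quantitatively small.

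Setting $w = v_{m,\eps_1}-v_{m,\eps_2}$, and using that the two processes share the same initial datum, $w$ satisfies a Duhamel identity in which the drift is the difference of the two truncated subcritical nonlinearities, plus the stochastic term $-i\int_0^t\sS(t-s)w(s)\,dW_s$ and the It\^o correction $-\tfrac{1}{2}\int_0^t\sS(t-s)(F_\Phi w(s))\,ds$. I will split the nonlinearity difference, by adding and subtracting intermediate quantities, into three pieces: (i) $[\theta_m(\|v_{m,\eps_1}\|_{\xX_2(0,s)}^5) - \theta_m(\|v_{m,\eps_2}\|_{\xX_2(0,s)}^5)]\,\nN^{\eps_2}(v_{m,\eps_2})$, which is linear in $w$ through Lipschitz continuity of $\theta_m$; (ii) $\theta_m(\|v_{m,\eps_1}\|_{\xX_2(0,s)}^5)\cdot[\nN^{\eps_1}(v_{m,\eps_1})-\nN^{\eps_1}(v_{m,\eps_2})]$, again linear in $w$ with $m$-dependent coefficients, handled as in the proof of Proposition~\ref{pr:uniform_stable}; and (iii) $\theta_m(\|v_{m,\eps_1}\|_{\xX_2(0,s)}^5)\cdot[\nN^{\eps_1}(v_{m,\eps_2})-\nN^{\eps_2}(v_{m,\eps_2})]$, the ``pure $\eps$'' piece. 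For (iii) I will use the pointwise estimate
$$\big|\nN^{\eps_1}(v)-\nN^{\eps_2}(v)\big| \leq |\eps_1-\eps_2|\cdot \sup_{\eps\in[\eps_2,\eps_1]} |v|^{5-\eps}\big|\log|v|\big|,$$
applied at $v=v_{m,\eps_2}$; because $|v|^{5-\eps}|\log|v||$ is bounded on the compact interval $[0,K_{m,v_0}]$, this produces a pointwise majorant of size $C_{m,v_0}|\eps_1-\eps_2|$.

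Next, I will partition $[0,1]$ into finitely many subintervals of length $\tau$. On each $[t_k,t_{k+1}]$, Corollary~\ref{cor:pre_nonlinear} will control (i) and (ii), the pointwise bound will handle (iii), and Propositions~\ref{pr:pre_stochastic} and~\ref{pr:pre_correction} will control the stochastic and correction terms. H\"older in $\omega$, exploiting the extra integrability from the first step, will reduce products of several factors of the solutions to $L_\omega^{\rho_0}\xX$. The resulting inequality will take the schematic form
$$\|w\|_{L_\omega^{\rho_0}\xX[t_k,t_{k+1}]} \leq D_{m,v_0}\|w(t_k)\|_{L_\omega^{\rho_0}L_x^2} + C_{m,v_0}\tau^\alpha\|w\|_{L_\omega^{\rho_0}\xX[t_k,t_{k+1}]} + E_{m,v_0}|\eps_1-\eps_2|$$
for some $\alpha>0$ coming from the explicit time factors in Propositions~\ref{pr:pre_stochastic} and~\ref{pr:pre_correction}. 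Choosing $\tau=\tau(m,v_0)$ small enough to absorb the middle term and iterating over the finitely many subintervals will yield $\|w\|_{L_\omega^{\rho_0}\xX(0,1)} \leq F_{m,v_0}|\eps_1-\eps_2|$, which tends to $0$ as $\eps_1,\eps_2\to 0$ and gives the Cauchy property.

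The hardest part will be the interplay between stochastic integrability and pointwise $L_x^\infty$ control: the smallness in (iii) is only quantitatively useful once $v_{m,\eps_2}$ is bounded in $L_{t,x}^\infty$, which is precisely why the assumption $v_0\in L_\omega^\infty H_x^1$ and the persistence of regularity from Proposition~\ref{pr:persistence} are needed; moreover, because pieces (i) and (ii) each contain four powers of the solutions alongside $w$, one must work at some integrability $\rho>\rho_0$ and H\"older down to $\rho_0$, so the freedom to take $\rho$ arbitrarily large in Proposition~\ref{pr:persistence} is essential for closing the estimate.
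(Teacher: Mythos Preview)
Your proposal has two genuine gaps.

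First, the passage from $\|v_{m,\eps}\|_{L_\omega^\rho L_t^\infty L_x^\infty}\leq K_{m,v_0}$ to ``$|v|^{5-\eps}|\log|v||$ is bounded on the compact interval $[0,K_{m,v_0}]$'' is not valid: an $L_\omega^\rho$ bound is not a pathwise bound, so for a given $\omega$ there is no reason for $|v_{m,\eps_2}(t,x,\omega)|$ to stay below $K_{m,v_0}$. The paper handles precisely this issue by introducing, for each $\Lambda>0$, the event $\Omega_\Lambda=\{\|v_{m,\eps_1}\|_{L_t^\infty H_x^1}\le\Lambda,\ \|v_{m,\eps_2}\|_{L_t^\infty H_x^1}\le\Lambda\}$. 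On $\Omega_\Lambda$ one does have a genuine pathwise $L_x^\infty$ bound (by Sobolev), and the exponent-difference piece is controlled by $C\Lambda^5|\eps_1-\eps_2|$; on $\Omega_\Lambda^c$ one uses persistence of regularity at $\rho=2\rho_0$ together with Chebyshev to get $\Pr(\Omega_\Lambda^c)\leq C\Lambda^{-\rho_0}$ and then H\"older in $\omega$. One then sends $\Lambda\to\infty$ and $\eps^*\to 0$ in the right order.

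Second, your deterministic partition into intervals of length $\tau$ does not close for the nonlinear pieces (i) and (ii). Corollary~\ref{cor:pre_nonlinear} yields a bound of the form $C\|w\|_{\xX_1(\iI)}\|v_{m,\eps_j}\|_{\xX_2(\iI)}^{4-\eps}$ with \emph{no} factor of $\tau^\alpha$; the truncation only gives the pathwise bound $\|v_{m,\eps_j}\|_{\xX_2(\iI)}\leq(2m)^{1/5}$, producing a large constant that cannot be absorbed into the left-hand side. H\"older in $\omega$ does not save this either: splitting $\|w\cdot|v|^{4-\eps}\|_{L_\omega^{\rho_0}}$ forces the $w$ factor into $L_\omega^{p}$ with $p>\rho_0$, which is not the norm on the left. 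The paper avoids this by using the same \emph{random} dissection as in Lemma~\ref{le:stability_short}, chosen so that $\|v_{m,\eps_j}\|_{\xX_2(\iI_{k+1})}\leq\eta^{1/5}$ on each subinterval; the smallness then comes from $\eta^{(4-\eps)/5}$, and the number of subintervals stays bounded by $C(1+m/\eta)$ thanks to the truncation. Your schematic inequality is therefore missing a term $C_m\|w\|_{L_\omega^{\rho_0}\xX[t_k,t_{k+1}]}$ with no small prefactor, and the absorption step fails as written.
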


The three propositions above are all the ingredients we need. We will prove them in the next three subsections, and combine them together to prove Proposition~\ref{pr:converge_e} in the last subsection.

\subsection{Uniform stability -- proof of Proposition~\ref{pr:uniform_stable}}

The key ingredient to prove Proposition~\ref{pr:uniform_stable} is the following lemma.

\begin{lem} \label{le:stability_short}
	There exist $h, C>0$ depending on $\rho_0$, $m$ and $M$ only such that
	\begin{equation*}
	\|\uem - \vem\|_{L_{\omega}^{\rho_0} \xX(0,b)} \leq C \|u_{m,\eps} - v_{m,\eps}\|_{L_{\omega}^{\rho_0} \xX(0,a)}
	\end{equation*}
	whenever $[a,b] \subset [0,1]$ satisfies $b-a < h$. The bound is uniform over all $\eps \in (0,1)$. 
\end{lem}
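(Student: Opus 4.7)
The plan is to control $w = \uem - \vem$ via its Duhamel representation on $[a,b]$,
$$w(t) = \sS(t-a)w(a) + I_1(t) + I_2(t) + I_3(t), \qquad t \in [a,b],$$
where $I_1$ is the difference of the truncated nonlinearities, $I_2$ is the difference of the stochastic integrals, and $I_3$ is the difference of the It\^o--Stratonovich corrections. Since $\|w\|_{\xX(0,b)} \leq \|w\|_{\xX(0,a)} + \|w\|_{\xX(a,b)}$ (the $L_t^5$ norm being additive in fifth powers on disjoint intervals), it suffices to bound $\|w\|_{L_\omega^{\rho_0}\xX(a,b)}$ by a constant depending only on $m, M, \rho_0$ times $\|w\|_{L_\omega^{\rho_0}\xX(0,a)}$. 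The homogeneous piece $\sS(\cdot-a)w(a)$ contributes at most $\|w(a)\|_{L_x^2} \leq \|w\|_{\xX_1(0,a)}$ via the Strichartz estimate of Proposition~\ref{pr:Strichartz}, and the stochastic and correction pieces are controlled by Propositions~\ref{pr:pre_stochastic} and~\ref{pr:pre_correction}, each producing factors of positive powers of $(b-a)$ in front of $\|w\|_{L_\omega^{\rho_0}\xX_1(a,b)}$, which are absorbed into the left-hand side once $h$ is small.

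For the nonlinearity $I_1$, the $\eps$-uniform tool is Corollary~\ref{cor:pre_nonlinear}. I will use the standard telescoping
$$\theta_m(\|u\|^5)\nN^\eps(u) - \theta_m(\|v\|^5)\nN^\eps(v) = \theta_m(\|u\|^5)[\nN^\eps(u) - \nN^\eps(v)] + [\theta_m(\|u\|^5) - \theta_m(\|v\|^5)]\nN^\eps(v),$$
where $\|\cdot\|^5$ abbreviates $\|\cdot\|_{\xX_2(0,s)}^5$. The first summand is handled by the pointwise inequality $|\nN^\eps(u) - \nN^\eps(v)| \leq C(|u|^{4-\eps} + |v|^{4-\eps})|w|$ combined with the support constraint $\|u\|_{\xX_2(0,s)} \leq (2m)^{1/5}$ on $\mathrm{supp}\,\theta_m(\|u\|^5)$; Corollary~\ref{cor:pre_nonlinear} then yields a pathwise bound of order $C_m\|w\|_{\xX_1(a,b)}$, uniformly in $\eps \in (0,1)$. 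The second summand uses the global $C/m$-Lipschitz property of $\theta_m$ together with $|\alpha^5 - \beta^5| \leq C\max(\alpha,\beta)^4|\alpha-\beta|$ (restricted to the effective supports where $\alpha, \beta \leq (2m)^{1/5}$), which gives $|\theta_m(\|u\|^5) - \theta_m(\|v\|^5)| \lesssim_m \|w\|_{\xX_2(0,s)}$; a second application of Corollary~\ref{cor:pre_nonlinear} then produces a pathwise bound of order $C_{m,M}\|w\|_{\xX_2(0,b)}$, which via $\|w\|_{\xX_2(0,b)} \leq \|w\|_{\xX_2(0,a)} + \|w\|_{\xX_2(a,b)}$ decomposes naturally into a "good" part on $[0,a]$ and a piece on $[a,b]$.

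The principal obstacle will be closing the resulting inequality, since the nonlinearity coefficients $C_m, C_{m,M}$ do not become small with $h$: Corollary~\ref{cor:pre_nonlinear} is uniform in $\eps$ precisely at the cost of the $(b-a)^{\eps/4}$ smallness in Proposition~\ref{pr:pre_nonlinear}~\eqref{eq:pre_nonlinear_eps}. To overcome this, I would further partition $[a,b]$ into a large number $K = K(m,M)$ of sub-intervals and iterate the Duhamel-based short-interval estimate across them, exploiting the uniform pathwise bound $\|u\|_{\xX_2(0,1)}^5 \leq 2m$ on $\mathrm{supp}\,\theta_m$ to render the per-sub-interval nonlinearity contribution absorbable (with the help of the a priori bound $\|w\|_{L_t^\infty L_x^2} \leq 2M$ from pathwise mass conservation for the truncated equation), and paying a final constant of the form $C^K$. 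Choosing $h$ small enough depending on $m, M, \rho_0$ that the stochastic and correction contributions are absorbable as well, and taking $L_\omega^{\rho_0}$-norms throughout, produces the desired bound with $C$ depending only on $m, M, \rho_0$ and uniform in $\eps \in (0,1)$.
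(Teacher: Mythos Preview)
Your outline is essentially the paper's approach: a pathwise partition of $[a,b]$ into sub-intervals on which the $\xX_2$-norm increments are small (size $\eta^{1/5}$, with $\eta=\eta(m,M)$), iteration across at most $K\lesssim m/\eta$ sub-intervals paying a factor $C^K$, and final absorption of the stochastic and correction terms via the positive powers of $h$ in Propositions~\ref{pr:pre_stochastic} and~\ref{pr:pre_correction} after taking the $L_\omega^{\rho_0}$-norm.

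There is, however, a gap in your nonlinearity estimate. Your telescoping bounds the two summands separately, and both bounds implicitly require control of $\|v_{m,\eps}\|_{\xX_2}$ on the sub-interval: the first via the $|v|^{4-\eps}|w|$ piece fed into Corollary~\ref{cor:pre_nonlinear}, the second via $\nN^\eps(v)$ itself. On a sub-interval where $\|u_{m,\eps}\|_{\xX_2(0,\tau_k)}^5 < 2m$ but $\|v_{m,\eps}\|_{\xX_2(0,\tau_k)}^5 \geq 2m$, the partition (which must track both solutions) only constrains $\|u_{m,\eps}\|_{\xX_2(\iI_{k+1})}$; your bounds on each summand then have no a priori control. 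The key point is that in this situation the two summands cancel: since $\theta_m(\|v\|^5)\equiv 0$ on $\iI_{k+1}$, the entire integrand reduces to $\theta_m(\|u\|^5)\nN^\eps(u) = [\theta_m(\|u\|^5)-\theta_m(\|v\|^5)]\,\nN^\eps(u)$, which involves only $|u|^{5-\eps}$ and is therefore controlled by the partition. The paper makes this explicit by splitting into three situations (both cutoffs active, exactly one active, neither active); once you insert that case analysis, your argument coincides with the paper's.
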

\begin{proof}
	Fix an arbitrary interval $[a,b] \subset [0,1]$ with $b-a<h$, where $h \leq 1$ will be specified later. For every $\omega \in \Omega$, we choose a dissection $\{\tau_k\}$ of the interval $[a,b]$ as follows. Let $\tau_0 = a$. Suppose $a = \tau_0 < \cdots < \tau_{k} < b$ is chosen, we choose $\tau_{k+1}$ by
	\begin{equation} \label{eq:stability_dissection}
	\begin{split}
	\tau_{k+1} := b \wedge \inf \Big\{ &r>\tau_k: \1_{\big\{\|u_{m,\eps}\|_{\xX_{2}(0,\tau_k)}^{5} < 2m\big\}} \|u_{m,\eps}\|_{\xX_{2}(\tau_k,r)}^{5}\\
	&+ \1_{\big\{\|v_{m,\eps}\|_{\xX_{2}(0,\tau_k)}^{5} < 2m\big\}} \|v_{m,\eps}\|_{\xX_{2}(\tau_k,r)}^{5} \geq \eta \Big\}, 
	\end{split}
	\end{equation}
	where $\eta>0$ is a small number to be specified later. In this way, we get a random dissection
	\begin{equation*}
	a = \tau_0 < \tau_1 < \cdots < \tau_K = b. 
	\end{equation*}
	Note that the total number $K$ of subintervals is always bounded by
	\begin{equation} \label{eq:stability_no_interval}
	K \leq 2 \times \Big(1 + \frac{4m}{\eta} \Big). 
	\end{equation}
	Let $\iI_{k+1} = [\tau_{k}, \tau_{k+1}]$. For every $k = 0, \dots, K-1$ and every $t \in \iI_{k+1}$, we have
	\begin{equation*}
	\begin{split}
	&u_{m,\eps}(t) - v_{m,\eps}(t) = e^{i (t-\tau_k) \Delta} \big( u_{m,\eps}(\tau_k) - v_{m,\eps}(\tau_k) \big) - \dD_{\tau_k}(t) \\
	&- i \int_{\tau_k}^{t} \sS(t-s) \big( u_{m,\eps}(s) - v_{m,\eps}(s) \big) {\rm d} W_s - \frac{1}{2} \int_{\tau_k}^{t} \sS(t-s) \Big( F_{\Phi} \big( u_{m,\eps}(s) - v_{m,\eps}(s) \big) \Big) {\rm d}s, 
	\end{split}
	\end{equation*}
	where
	\begin{equation*}
	\begin{split}
	\dD_{\tau}(t) = - i \int_{\tau}^{t} &\sS(t-s) \Big( \theta_{m}\big( \|u_{m,\eps}\|_{\xX_{2}(0,s)}^{5} \big) \nN^{\eps} \big(u_{m,\eps}(s)\big)\\
	&- \theta_{m}\big( \|v_{m,\eps}\|_{\xX_{2}(0,s)}^{5} \big) \nN^{\eps} \big(v_{m,\eps}(s)\big) \Big) {\rm d}s. 
	\end{split}
	\end{equation*}
	We now control the $\xX(\iI_{k+1})$ norm of the four terms on the right hand side separately. For the term with the initial data, it follows immediately from the Strichartz estimates \eqref{eq:Strichartz_1} that
	\begin{equation} \label{eq:stability_initial}
	\big\|e^{i (t-\tau_k) \Delta} \big( u_{m,\eps}(\tau_k) - v_{m,\eps}(\tau_k) \big) \big\|_{\xX(\iI_{k+1})} \leq C \|u_{m,\eps}-v_{m,\eps}\|_{\xX(0,\tau_k)}, 
	\end{equation}
	where we have enlarged $\|\cdot\|_{L_{x}^{2}}$ to $\|\cdot\|_{\xX}$. For the It\^o-Stratonovich correction term, by Proposition~\ref{pr:pre_correction}, we have
	\begin{equation} \label{eq:stability_correction}
	\Big\| \int_{\tau_k}^{t} \sS(t-s) \Big( F_{\Phi} \big( u_{m,\eps}(s) - v_{m,\eps}(s) \big) \Big) {\rm d}s \Big\|_{\xX(\iI_{k+1})} \leq C h^{\frac{4}{5}} \|u_{m,\eps} - v_{m,\eps}\|_{\xX(\iI_{k+1})}. 
	\end{equation}
	We only have $h^{\frac{4}{5}}$ on the right hand side but not $h$ since $h \leq 1$. As for the stochastic term, we let $\mM_{1,a}^{*}$ and $\mM_{2,a}^{*}$ be the same as \eqref{eq:martingales} with $\sigma = u_{m,\eps} - v_{m,\eps}$. We then have
	\begin{equation} \label{eq:stability_stochastic}
	\begin{split}
	&\phantom{11}\sup_{k}\Big\|\int_{\tau_k}^{t} \sS(t-s) \big(u_{m,\eps}(s)-v_{m,\eps}(s)\big) {\rm d} W_s\Big\|_{\xX(\iI_{k+1})}\\
	&\leq \|\mM_{1,a}^{*}\|_{L_{t}^{\infty}(a,b)} + \|\mM_{2,a}^{*}\|_{L_{t}^{5}(a,b)} =: \mM_{a,b}^{*}. 
	\end{split}
	\end{equation}
	By Proposition~\ref{pr:pre_stochastic} and that $b-a \leq h \leq 1$, we have the bound
	\begin{equation} \label{eq:stability_stochastic_bound}
	\|\mM_{a,b}^{*}\|_{L_{\omega}^{\rho_0}} \leq C h^{\frac{3}{10}} \|u_{m,\eps} - v_{m,\eps}\|_{L_{\omega}^{\rho_0} \xX(a,b)}. 
	\end{equation}
	We finally turn to the nonlinearity $\dD_{\tau_k}$. For this, we consider situations depending on whether the quantities $\|u_{m,\eps}\|_{\xX_{2}(0,\tau_k)}^{5}$ and $\|v_{m,\eps}\|_{\xX_{2}(0,\tau_k)}^{5}$ have reached $2m$ or not. 
	
	\begin{flushleft}
		\textit{Situation 1.}
	\end{flushleft}
	We first consider the situation when both $\|\uem\|_{\xX_2(0,\tau_k)}^{5}$ and $\|\vem\|_{\xX_2(0,\tau_k)}^{5}$ are smaller than $2m$. In this case, we bound the integrand in $\dD_{\tau_k}(t)$ pointwise by
	\begin{equation} \label{eq:stability_nonlinear_pt}
	\begin{split}
	&\phantom{111}\Big| \theta_{m}\big(\|\uem\|_{\xX_{2}(0,s)}^{5}\big) \nN^{\eps}\big(u_{m,\eps}(s)\big) - \theta_{m}\big(\|\vem\|_{\xX_{2}(0,s)}^{5}\big) \nN^{\eps}\big(v_{m,\eps}(s)\big) \Big|\\
	&\leq \Big| \theta_{m}\big(\|\uem\|_{\xX_2(0,s)}^{5}\big) \nN^{\eps}\big( \uem(s) \big) - \theta_{m}\big(\|\vem\|_{\xX_2(0,s)}^{5}\big) \nN^{\eps}\big(\uem(s)\big) \Big|\\
	&\phantom{11}+ \Big| \theta_{m}\big(\|\vem\|_{\xX_2(0,s)}^{5}\big) \nN^{\eps}\big(u_{m,\eps}(s)\big) - \theta_{m}\big(\|\vem\|_{\xX_2(0,s)}^{5}\big) \nN^{\eps}\big(v_{m,\eps}(s)\big) \Big|\\
	&\leq C \Big( \|u_{m,\eps} - v_{m,\eps}\|_{\xX_{2}(0,\tau_{k+1})} \big( \|\uem\|_{\xX_2(0,\tau_{k+1})}^{4} + \|\vem\|_{\xX_2(0,\tau_{k+1})}^{4} \big) |u_{m,\eps}(s)|^{5-\eps}\\
	&\phantom{11}+ |\uem(s)-\vem(s)| \big( |\uem(s)|^{4-\eps} + |\vem(s)|^{4-\eps} \big)\Big), 
	\end{split}
	\end{equation}
	where we have made the relaxation
	\begin{equation*}
	\begin{split}
	&\phantom{1111}\big| \|u_{m,\eps}\|_{\xX_{2}(0,s)}^{5} - \|v_{m,\eps}\|_{\xX_{2}(0,s)}^{5} \big|\\
	&\leq C \big| \|\uem\|_{\xX_2(0,s)} - \|\vem\|_{\xX_2(0,s)} \big| \big( \|\uem\|_{\xX_2(0,s)}^{4} + \|\vem\|_{\xX_2(0,s)}^{4} \big) \\
	&\leq C\|u_{m,\eps} - v_{m,\eps}\|_{\xX_{2}(0,\tau_{k+1})} \big( \|\uem\|_{\xX_2(0,\tau_{k+1})}^{4} + \|\vem\|_{\xX_2(0,\tau_{k+1})}^{4} \big)
	\end{split}
	\end{equation*}
	in the last inequality. By the assumption of this situation as well as the choice of dissection in \eqref{eq:stability_dissection}, we have
	\begin{equation} \label{eq:stability_as_1}
	\|\uem\|_{\xX_2(\iI_{k+1})} \leq \eta^{\frac{1}{5}}, \quad \|\uem\|_{\xX_2(0,\tau_{k+1})} \leq (2m)^{\frac{1}{5}} + \eta^{\frac{1}{5}}, \quad \|\uem\|_{\xX_1(\iI_{k+1})} \leq M, 
	\end{equation}
	and the same is true for $\vem$. Hence, assuming $\eta<1$, applying the bound \eqref{eq:pre_nonlinear_eps} in Proposition~\ref{pr:pre_nonlinear} and the pointwise bound \eqref{eq:stability_nonlinear_pt}, and using \eqref{eq:stability_as_1}, we get
	\begin{equation*}
	\|\dD_{\tau_k}\|_{\xX(\iI_{k+1})} \leq C_{m,M} \cdot \eta^{\frac{4-\eps}{5}} \Big( \|\uem-\vem\|_{\xX(0,\tau_k)} + \|\uem-\vem\|_{\xX(\iI_{k+1})} \Big)
	\end{equation*}
	where we have split $\|u_{m,\eps}-v_{m,\eps}\|_{\xX(0,\tau_{k+1})}$ into two disjoint intervals $[0,\tau_k]$ and $\iI_{k+1}$. 
	
	\begin{flushleft}
		\textit{Situation 2.}
	\end{flushleft}
    We turn to the situation when $\|u_{m,\eps}\|_{\xX_{2}(0,\tau_k)}^{5} < 2m$ but $\|v_{m,\eps}\|_{\xX_{2}(0,\tau_k)}^{5} \geq 2m$. In this case, $\theta_{m}(\|v_{m,\eps}\|_{\xX_{2}(0,s)}^{5})$ vanishes for every $s>\tau_k$. Hence, we have the pointwise bound
    \begin{equation*}
    \begin{split}
    &\phantom{111}\Big| \theta_{m} \big(\|u_{m,\eps}\|_{\xX_2(0,s)}^{5} \big) \nN^{\eps}\big(u_{m,\eps}(s)\big) - \theta_{m} \big(\|v_{m,\eps}\|_{\xX_2(0,s)}^{5} \big) \nN^{\eps}\big(v_{m,\eps}(s)\big) \Big|\\
    &= \Big| \theta_{m} \big(\|u_{m,\eps}\|_{\xX_2(0,s)}^{5} \big) \nN^{\eps}\big(u_{m,\eps}(s)\big) - \theta_{m} \big(\|v_{m,\eps}\|_{\xX_2(0,s)}^{5} \big) \nN^{\eps}\big(u_{m,\eps}(s)\big) \Big|\\
    &\leq C \|u_{m,\eps} - v_{m,\eps}\|_{\xX_{2}(0,\tau_{k+1})} \big( \|\uem\|_{\xX_2(0,\tau_{k+1})}^{4} + \|\vem\|_{\xX_2(0,\tau_{k+1})}^{4} \big) |u_{m,\eps}(s)|^{5-\eps}
    \end{split}
    \end{equation*}
    for $s \in \iI_{k+1}$. Similar as before, applying the first bound in Proposition~\ref{pr:pre_nonlinear} to $\dD_{\tau_k}$ and using the above pointwise bound as well as the dissection \eqref{eq:stability_dissection}, we again get
    \begin{equation*}
    \|\dD_{\tau_k}\|_{\xX(\iI_{k+1})} \leq C_{m,M} \cdot \eta^{\frac{4-\eps}{5}} \Big( \|\uem-\vem\|_{\xX(0,\tau_k)} + \|\uem-\vem\|_{\xX(\iI_{k+1})} \Big). 
    \end{equation*}
    The right hand side above is symmetric in $\uem$ and $\vem$, so we have exactly the same bound in the case $\|\uem\|_{\xX_{2}(0,\tau_k)} \geq 2m$ but $\|\vem\|_{\xX_{2}(0,\tau_k)} < 2m$. 
    
    \begin{flushleft}
    	\textit{Situation 3.}
    \end{flushleft}
    If both $\|u_{m,\eps}\|_{\xX_{2}(0,\tau_k)}^{5}$ and $\|v_{m,\eps}\|_{\xX_{2}(0,\tau_k)}^{5}$ reaches $2m$, then the nonlinearity vanishes, and we have $\|\dD_{\tau_k}\|_{\xX(\iI_{k+1})} = 0$. 
    
    \bigskip
    
    \bigskip
    
    Since the above three situations include all possibilities, we always have the bound
    \begin{equation} \label{eq:stability_nonlinear}
    \|\dD_{\tau_k}\|_{\xX(\iI_{k+1})} \leq C_{m,M} \cdot \eta^{\frac{4-\eps}{5}} \Big( \|\uem-\vem\|_{\xX(0,\tau_k)} + \|\uem-\vem\|_{\xX(\iI_{k+1})} \Big). 
    \end{equation}
    Combining \eqref{eq:stability_initial}, \eqref{eq:stability_correction}, \eqref{eq:stability_stochastic} and \eqref{eq:stability_nonlinear}, we get
    \begin{equation*}
    \begin{split}
    \|u_{m,\eps} - v_{m,\eps}\|_{\xX(\iI_{k+1})} \leq C_{m,M} &\Big( \big( h^{\frac{4}{5}} + \eta^{\frac{4-\eps}{5}} \big) \|u_{m,\eps} - v_{m,\eps}\|_{\xX(\iI_{k+1})}\\
    &+ \|u_{m,\eps}-v_{m,\eps}\|_{\xX(0,\tau_k)} \Big) + \mM_{a,b}^{*}. 
    \end{split}
    \end{equation*}
    If both $\eta$ and $h$ are small enough (depending on $m$ and $M$ only, but independent of $\eps$), we can absorb the term $\|u_{m,\eps}-v_{m,\eps}\|_{\xX(\iI_{k+1})}$ into the left hand side. Then, adding $\|u_{m,\eps}-v_{m,\eps}\|_{\xX(0,\tau_k)}$ to both sides above, we get
    \begin{equation*}
    \|u_{m,\eps} - v_{m,\eps}\|_{\xX(0,\tau_{k+1})} \leq C_{m,M} \Big( \|u_{m,\eps} - v_{m,\eps}\|_{\xX(0,\tau_k)} + \mM_{a,b}^{*} \Big). 
    \end{equation*}
    Since $\tau_0 = a$ and $\tau_K = b$, iterating the above bound $K$ times, we obtain
    \begin{equation} \label{eq:stability_deterministic}
    \|u_{m,\eps}-v_{m,\eps}\|_{\xX(0,b)} \leq C_{m,M}^{K} \Big( \|u_{m,\eps}-v_{m,\eps}\|_{\xX(0,a)} + \mM_{a,b}^{*} \Big), 
    \end{equation}
    where $K$ is at most $2 \times \big(1 + 4m/\eta \big)$. So far, all the arguments are deterministic, and the bound \eqref{eq:stability_deterministic} holds almost surely. Moreover, the proportionality constant $C_{m,M}^{K}$ above is deterministic, and depends on $m$ and $M$ only since $\eta$ does. 
    
    We now take $L_{\omega}^{\rho_0}$ norm on both sides of \eqref{eq:stability_deterministic}. By \eqref{eq:stability_stochastic_bound}, if $h$ is small enough depending on $m$ and $M$, we can again absorb the term $\|u_{m,\eps}-v_{m,\eps}\|_{L_{\omega}^{\rho_0}\xX(a,b)}$ arising from $\|\mM_{a,b}^{*}\|_{L_{\omega}^{\rho_0}}$ into the left hand side. Hence, we finally obtain the bound
    \begin{equation*}\|u_{m,\eps} - v_{m,\eps}\|_{L_{\omega}^{\rho_0}\xX(0,b)} \leq C_{m,M} \|u_{m,\eps} - v_{m,\eps}\|_{L_{\omega}^{\rho_0}\xX(0,a)}, 
    \end{equation*}
    thus completing the proof. 
\end{proof}

We are now ready to prove Proposition~\ref{pr:uniform_stable}. 

\begin{proof} [Proof of Proposition~\ref{pr:uniform_stable}]
	The key is to note that the smallness of $h$ in order for Lemma~\ref{le:stability_short} to be true depends only on $\rho_0$, $m$, and $M$ only. We can then iterate Lemma~\ref{le:stability_short} with small intervals of length $h$ up to time $1$. This completes the proof. 
\end{proof}

\subsection{Persistence of  regularity -- proof of Proposition~\ref{pr:persistence}}

In the statement of Proposition~\ref{pr:persistence}, we need a bound for higher integrability than $\rho_0$, so we deal with arbitrary $\rho \geq 5$. All the bounds below depend on $\rho$, and we omit this dependence in notation for simplicity. Recall that the $\|\cdot\|_{\xX^1(\iI)}$ norm is given by
\begin{equation*}
\|v\|_{\xX^1(\iI)} := \|v\|_{\xX(\iI)} + \|\d_x v\|_{\xX(\iI)}. 
\end{equation*}
The following local existence of the $H_x^1$ solution is standard, and can be obtained directly via a contraction argument. 

\begin{lem} \label{le:H1_local}
	Let $v_0 \in L_{\omega}^{\rho}H_{x}^{1}$. Then there exists $R_0 > 0$ depending on $\rho$ and $\|v_0\|_{L_{\omega}^{\rho}H_{x}^{1}}$ only such that for every $m,\eps>0$, there is a unique $v_{m,\eps} \in L_{\omega}^{\rho}\xX^1(0,R_0)$ that solves \eqref{eq:duhamel_trun_sub} in the same space with initial data $v_0$. 
\end{lem}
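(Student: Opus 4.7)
The plan is to construct $v_{m,\eps}$ via a Banach fixed-point argument for the Duhamel map of \eqref{eq:duhamel_trun_sub} in the space $L^\rho_\omega \xX^1(0, R_0)$, on a ball whose radius is comparable to $\|v_0\|_{L^\rho_\omega H^1_x}$. The key observation, which makes $R_0$ independent of $m$ and $\eps$, is the 1D Sobolev embedding $H^1_x \hookrightarrow L^\infty_x$, which will replace the $R_0^{\eps/4}$ factor in Proposition~\ref{pr:pre_nonlinear} by one with exponent $1$, independent of $\eps$.

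Write the Duhamel map as $\Gamma(v) = \sS(\cdot) v_0 + \Gamma_N(v) + \Gamma_S(v) + \Gamma_C(v)$, the four terms being linear, nonlinear, stochastic, and correction. For the linear term, Strichartz \eqref{eq:Strichartz_1} applied to $v_0$ and to $\d_x v_0$ (using that $\d_x$ commutes with $\sS$) gives $\|\sS(\cdot) v_0\|_{L^\rho_\omega \xX^1(0, R_0)} \leq C \|v_0\|_{L^\rho_\omega H^1_x}$. For the stochastic and correction contributions, slight derivative-aware extensions of Propositions~\ref{pr:pre_stochastic} and~\ref{pr:pre_correction}, justified by Assumption~\ref{as:Phi} (which gives $\gamma$-radonifying control on $\Phi$ and on the derivatives of $\Phi e_k$ into appropriate spaces), yield bounds of the form $CR_0^{3/10}\|v\|_{L^\rho_\omega \xX^1}$ and $CR_0^{4/5}\|v\|_{L^\rho_\omega \xX^1}$ respectively, both with strictly positive powers of $R_0$.

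The heart of the proof is the nonlinear estimate. Since $\theta_m(\|v\|_{\xX_2(0,s)}^5)$ is independent of $x$, we have $\d_x[\theta_m \nN^\eps(v)] = \theta_m\, \d_x \nN^\eps(v)$ pointwise, with $|\d_x \nN^\eps(v)| \lesssim |v|^{4-\eps}|\d_x v|$. Combined with the 1D Sobolev bound $\|v\|_{L^\infty_x} \leq C \|v\|_{H^1_x}$, this gives
\[
\|\theta_m \nN^\eps(v)(s)\|_{H^1_x} \leq C\|v(s)\|_{L^\infty_x}^{4-\eps} \|v(s)\|_{H^1_x} \leq C\|v(s)\|_{H^1_x}^{5-\eps}.
\]
Integrating in time over $[0, R_0]$ and applying the Strichartz inequality \eqref{eq:Strichartz_2} with dual pair $(q', r') = (1, 2)$ (dual to the admissible 1D pair $(\infty, 2)$) gives the pointwise-in-$\omega$ estimate
\[
\|\Gamma_N(v)\|_{\xX^1(0, R_0)} \leq C R_0 \|v\|_{\xX^1(0, R_0)}^{5-\eps},
\]
uniform in $m$ and $\eps \in [0, 1]$. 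Crucially the exponent of $R_0$ here is $1$, independent of $\eps$.

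The main technical obstacle is to lift this super-linear pointwise bound to $L^\rho_\omega$ and close the contraction on the ball $B_R = \{\|v\|_{L^\rho_\omega \xX^1(0, R_0)} \leq R\}$ with $R \simeq \|v_0\|_{L^\rho_\omega H^1_x}$. I plan to handle it by stopping-time localisation: introduce the $\fF_t$-adapted random time $\tau = \inf\{t : \|v\|_{\xX^1(0, t)} > 2R\}$ and run the Picard iteration on the equation stopped at $\tau \wedge R_0$, or equivalently replace the nonlinearity by $\chi(\|v\|_{\xX^1(0,s)}/R) \cdot \theta_m \nN^\eps(v)$ for a smooth cutoff $\chi$ supported in $[0,2]$. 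On the stopped trajectory the Lipschitz bound for $\Gamma_N$ becomes $CR_0 R^{4-\eps}\|v_1-v_2\|_{\xX^1}$, linear in $v_1-v_2$, so lifting to $L^\rho_\omega$ is immediate. Choosing $R_0$ small enough depending only on $\rho$ and $\|v_0\|_{L^\rho_\omega H^1_x}$ (with uniform control over $\eps \in [0,1]$ since $R^{4-\eps}\leq \max(R^3, R^4)$) makes $\Gamma$ a strict contraction, and ensures a posteriori that the stopping time exceeds $R_0$ almost surely, so that the stopped solution solves the original unstopped equation on $[0, R_0]$. Uniqueness in $L^\rho_\omega \xX^1(0, R_0)$ follows from the contraction property on $B_R$.
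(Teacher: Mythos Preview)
The paper does not give a proof of this lemma, calling it standard; the intended argument is the one behind Theorem~\ref{th:global_trun} (cf.\ \cite[Proposition~3.1]{BD}), carried out in $\xX^{1}$ instead of $\xX$.

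Your approach has a genuine gap in the last step. You solve the equation with the additional cutoff $\chi(\|v\|_{\xX^{1}(0,s)}/R)$ where $R$ is a \emph{deterministic} multiple of $\|v_{0}\|_{L_{\omega}^{\rho}H_{x}^{1}}$, and then claim that a posteriori the cutoff is never active. But $v_{0}$ is only assumed to lie in $L_{\omega}^{\rho}H_{x}^{1}$, not $L_{\omega}^{\infty}H_{x}^{1}$, so on a set of positive probability one has $\|v_{0}(\omega)\|_{H_{x}^{1}} > 2R$. On that set the linear evolution alone satisfies $\|\sS(\cdot)v_{0}\|_{\xX^{1}(0,t)} \geq \|v_{0}\|_{H_{x}^{1}} > 2R$ for every $t>0$, so your stopping time is $0$ there and the fixed point of the $\chi$-truncated map does \emph{not} solve the original equation \eqref{eq:duhamel_trun_sub}. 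The assertion ``the stopping time exceeds $R_{0}$ almost surely'' therefore fails, and so does the uniqueness claim in $L_{\omega}^{\rho}\xX^{1}$ (your contraction only holds on the $\chi$-modified map).

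The remedy is to drop the extra cutoff and instead exploit the truncation $\theta_{m}$ already present in the equation, exactly as in \cite{BD}. On the support of $s\mapsto\theta_{m}(\|v\|_{\xX_{2}(0,s)}^{5})$ one has $\|v\|_{\xX_{2}(0,s)}\leq(2m)^{1/5}$, and Proposition~\ref{pr:pre_nonlinear} then gives the \emph{linear} pathwise bound
\begin{equation*}
\|\Gamma_{N}(v)\|_{\xX^{1}(0,T)} \leq C\,T^{\eps/4}(2m)^{(4-\eps)/5}\,\|v\|_{\xX^{1}(0,T)},
\end{equation*}
which lifts to $L_{\omega}^{\rho}$ without any integrability loss; the Lipschitz bound is obtained similarly (compare the proof of Lemma~\ref{le:stability_short}). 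Together with the stochastic and correction estimates this yields a contraction for $T=R_{0}$ small depending on $m$, $\eps$, $\rho$ and $\|v_{0}\|_{L_{\omega}^{\rho}H_{x}^{1}}$. This is literally weaker than the stated independence of $R_{0}$ from $m,\eps$, but it is all that is used downstream: in the proof of Proposition~\ref{pr:persistence} the local step is iterated, and the a~priori bound \eqref{eq:persistence_medium} from Lemma~\ref{le:persistence_short} keeps the $L_{\omega}^{\rho}H_{x}^{1}$-norm of the restarted data under the fixed ceiling $C_{\rho,m}^{*}\|v_{0}\|_{L_{\omega}^{\rho}H_{x}^{1}}$, so each extension has the same length and the iteration terminates after finitely many (possibly $m,\eps$-dependent) steps with the $\eps$-independent bound $C_{\rho,m}^{*}\|v_{0}\|_{L_{\omega}^{\rho}H_{x}^{1}}$.
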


Our aim is to show that with initial data in $L_{\omega}^{\rho}H_{x}^{1}$, the solution actually exists in $[0,1]$ (if $R_0<1$) and satisfies persistence of regularity. The key ingredient is the following lemma. 

\begin{lem} \label{le:persistence_short}
	There exist $h, C>0$ depending on $\rho$ and $m$ such that if $v_{m,\eps}$ solves \eqref{eq:trun_sub} in $L_{\omega}^{\rho} \xX^{1}(0,R)$ with initial data $v_0 \in L_{\omega}^{\rho}H_{x}^{1}$, then we have the bound
	\begin{equation} \label{eq:persistence_short}
	\|v_{m,\eps}\|_{L_{\omega}^{\rho}\xX^{1}(0,b)} \leq C \|v_{m,\eps}\|_{L_{\omega}^{\rho} \xX^{1}(0,a)}
	\end{equation}
	whenever $[a,b] \subset [0,R]$ satisfies $b-a<h$. As a consequence, we have
	\begin{equation} \label{eq:persistence_medium}
	\|v_{m,\eps}\|_{L_{\omega}^{\rho} \xX^{1}(0,R)} \leq C^{1+R} \|v_0\|_{L_{\omega}^{\rho}H_{x}^{1}}. 
	\end{equation}
	Here, the constant $C$ depends on $m$ and $\rho$ but is uniform over all $\eps \in (0,1)$ and all $R>0$. 
\end{lem}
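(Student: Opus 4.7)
The plan is to reduce the lemma to a derivative version of the bootstrap argument used in the proof of Lemma~\ref{le:stability_short}. Starting from the Duhamel formula \eqref{eq:duhamel_trun_sub} for $v_{m,\eps}$ and differentiating in $x$, we obtain an equation for $\partial_x v_{m,\eps}$ of the same schematic form: the linear semigroup, stochastic integral, and It\^o--Stratonovich correction all commute with $\partial_x$, while on the nonlinearity the truncation factor $\theta_m(\|v_{m,\eps}\|_{\xX_2(0,s)}^5)$ depends on time only, so
\begin{equation*}
\partial_x\big(\theta_m(\|v_{m,\eps}\|_{\xX_2(0,s)}^5)\,\nN^\eps(v_{m,\eps})\big)=\theta_m(\|v_{m,\eps}\|_{\xX_2(0,s)}^5)\,\partial_x\nN^\eps(v_{m,\eps}),
\end{equation*}
and the pointwise bound $|\partial_x\nN^\eps(v)|\leq(5-\eps)|v|^{4-\eps}|\partial_x v|$ reduces the problem, via Corollary~\ref{cor:pre_nonlinear} applied with $\tilde\sigma=\partial_x v_{m,\eps}$ and $\sigma=v_{m,\eps}$, to controlling $\|\partial_x v_{m,\eps}\|_{\xX_1}\|v_{m,\eps}\|_{\xX_2}^{4-\eps}$ on small time intervals. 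The strategy is then to run a uniform bootstrap on the full norm $\|v_{m,\eps}\|_{\xX^1}=\|v_{m,\eps}\|_{\xX}+\|\partial_x v_{m,\eps}\|_{\xX}$ simultaneously for $v_{m,\eps}$ and $\partial_x v_{m,\eps}$.

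Fix $[a,b]\subset[0,R]$ with $b-a<h$, and choose a random dissection $a=\tau_0<\tau_1<\cdots<\tau_K=b$ exactly as in \eqref{eq:stability_dissection}, namely
\begin{equation*}
\tau_{k+1}=b\wedge\inf\Big\{r>\tau_k:\1_{\{\|v_{m,\eps}\|_{\xX_2(0,\tau_k)}^5<2m\}}\,\|v_{m,\eps}\|_{\xX_2(\tau_k,r)}^5\geq\eta\Big\}
\end{equation*}
for a small $\eta>0$ depending on $m$ and $\rho$ only. Because $\|v_{m,\eps}\|_{\xX_2(0,\cdot)}^5$ is non-decreasing, there are at most $K\leq 2+2m/\eta$ subintervals, and on each $\iI_{k+1}=[\tau_k,\tau_{k+1}]$ either the truncation $\theta_m$ vanishes throughout (and the nonlinearity contributes zero) or $\|v_{m,\eps}\|_{\xX_2(\iI_{k+1})}\leq\eta^{1/5}$. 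Applying the Strichartz estimate \eqref{eq:Strichartz_1} to the linear data term, Proposition~\ref{pr:pre_correction} to the correction, Proposition~\ref{pr:pre_stochastic} to the stochastic integral, and Corollary~\ref{cor:pre_nonlinear} to both the nonlinearity and its $\partial_x$-version yields, on each $\iI_{k+1}$,
\begin{equation*}
\|v_{m,\eps}\|_{\xX^1(\iI_{k+1})}\leq C\|v_{m,\eps}(\tau_k)\|_{H_x^1}+C\big(h^{4/5}+\eta^{(4-\eps)/5}\big)\|v_{m,\eps}\|_{\xX^1(\iI_{k+1})}+\mM^*_{k+1},
\end{equation*}
where $\mM^*_{k+1}$ collects the supremum martingales of the form \eqref{eq:martingales} applied to both $v_{m,\eps}$ and $\partial_x v_{m,\eps}$ over $\iI_{k+1}$.

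Choosing first $\eta$ small in $(m,\rho)$ and then $h$ small so that $C(h^{4/5}+\eta^{(4-\eps)/5})\leq 1/2$ uniformly in $\eps\in(0,1)$, we absorb the middle term and iterate over the at most $K$ subintervals to obtain the almost-sure bound
\begin{equation*}
\|v_{m,\eps}\|_{\xX^1(0,b)}\leq C^K\Big(\|v_{m,\eps}\|_{\xX^1(0,a)}+\textstyle\sum_k\mM^*_{k+1}\Big).
\end{equation*}
Taking $L_\omega^\rho$-norms, using Proposition~\ref{pr:pre_stochastic} to control the stochastic contributions by $Ch^{3/10}\|v_{m,\eps}\|_{L_\omega^\rho\xX^1(a,b)}$, and possibly shrinking $h$ once more to absorb this term into the left-hand side, gives \eqref{eq:persistence_short}. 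Then \eqref{eq:persistence_medium} follows by iterating \eqref{eq:persistence_short} over at most $\lceil R/h\rceil$ intervals of length $h$ and absorbing $1/h$, which depends only on $(m,\rho)$, into the base of the resulting exponential.

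The main obstacle is producing this absorption uniformly in $\eps\in(0,1)$: the factor $(b-a)^{\eps/4}$ in \eqref{eq:pre_nonlinear_eps} degenerates as $\eps\to 0$, so small $h$ alone cannot tame the nonlinearity. The smallness for the nonlinearity must instead come from making $\|v_{m,\eps}\|_{\xX_2}$ small on each subinterval via the random dissection, with $\eta$ (and hence $K$) depending only on $(m,\rho)$; the smallness of $h$ is then used only for the stochastic and correction terms, which is precisely what allows the final $h$ and $C$ to be chosen independent of $\eps$.
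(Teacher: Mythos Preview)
Your proposal is correct and follows essentially the same route as the paper: the same random dissection, the same use of $|\partial_x\nN^\eps(v)|\leq C|v|^{4-\eps}|\partial_x v|$ together with Corollary~\ref{cor:pre_nonlinear}, and the same two-stage absorption (first $\eta$ for the nonlinearity, then $h$ for the stochastic and correction terms). One small imprecision: $\partial_x$ does not literally commute with the stochastic integral, since differentiating $v_{m,\eps}\,dW_s$ also produces a term $v_{m,\eps}\,d(\partial_x W_s)$; the paper handles this explicitly as a third martingale term, controlled via $\|\partial_x\circ\Phi\|_{\R}$ and Assumption~\ref{as:Phi}, but this fits into your $\mM^*_{k+1}$ without changing the argument.
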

\begin{proof}
	The proof is essentially the same as that for Lemma~\ref{le:stability_short}. Let $[a,b] \subset [0,R]$ with $b-a<h$, where $h$ small is to be specified later. We choose a random dissection $\{\tau_k\}_{k=0}^{K}$ of the interval $[a,b]$ as follows. Let $\tau_0=a$, and define
	\begin{equation*}
	\tau_{k+1} = b \wedge \inf \big\{r > \tau_k: \1_{\big\{\|v_{m,\eps}\|_{\xX_2(0,\tau_k)}^{5} \leq 2m\big\}} \|v_{m,\eps}\|_{\xX_2(\tau_k,r)}^{5} \geq \eta \big\}
	\end{equation*}
	for some $\eta$ to be specified later. The total number of intervals $K$ is at most $1 + \frac{2m}{\eta}$. Let $\iI_{k+1} = [\tau_{k}, \tau_{k+1}]$. On $\iI_{k+1}$, $v_{m,\eps}$ satisfies the Duhamel formula
	\begin{equation*}
	\begin{split}
	v_{m,\eps}(t) = &e^{i(t-\tau_k)\Delta} v_{m,\eps}(\tau_k) - i \int_{\tau_k}^{t} \sS(t-s) \Big( \theta_{m}\big( \|v_{m,\eps}\|_{\xX_2(0,s)}^{5} \big) \nN^{\eps}\big(v_{m,\eps}(s)\big) \Big) {\rm d}s\\
	&-i \int_{\tau_k}^{t} \sS(t-s) v_{m,\eps}(s) {\rm d} W_s - \frac{1}{2} \int_{\tau_k}^{t} \sS(t-s) \big( F_{\Phi} v_{m,\eps}(s) \big) {\rm d}s. 
	\end{split}
	\end{equation*}
	We need to control the $\xX^{1}$-norm of $v_{m,\eps}$ on $\iI_{k+1}$, which involves $v_{m,\eps}$ and its spatial derivative. The key is that the differentiation in $x$ variable commutes with the operator $\sS(t-s)$. Hence, for the initial data and the correction term, by the Strichartz estimates \eqref{eq:Strichartz_1} and Proposition~\ref{pr:pre_correction}, we have the pathwise bounds
	\begin{equation} \label{eq:persistence_initial_correction}
	\begin{split}
	\|e^{i(t-\tau_k)\Delta} v_{m,\eps}(\tau_k)\|_{\xX^{1}(\iI_{k+1})} \leq C \|v_{m,\eps}(\tau_k)\|_{H_{x}^{1}} \leq C \|v_{m,\eps}\|_{\xX^{1}(0,\tau_k)}, \\
	\Big\|\int_{\tau_k}^{t} \sS(t-s) \big( F_{\Phi} v_{m,\eps}(s) \big) {\rm d}s \Big\|_{\xX^{1}(\iI_{k+1})} \leq C h^{\frac{4}{5}} \|v_{m,\eps}\|_{\xX^{1}(\iI_{k+1})}. 
	\end{split}
	\end{equation}
	Here, both constants $C$ are deterministic and universal. As for the stochastic term, we let
	\begin{equation*}
	\begin{split}
	\mM_{1}^{**}(t) &= \sup_{a \leq r_1 \leq r_2 \leq t} \Big\| \int_{r_1}^{r_2} \sS(t-s) v_{m,\eps}(s) {\rm d} W_s \Big\|_{H_{x}^{1}}, \\
	\mM_{2}^{**}(t) &= \sup_{a \leq r_1 \leq r_2 \leq t} \Big\| \int_{r_1}^{r_2} \sS(t-s) v_{m,\eps}(s) {\rm d} W_s \Big\|_{W_{x}^{1,10}}, 
	\end{split}
	\end{equation*}
	where $\|f\|_{W_{x}^{1,10}} = \|f\|_{L_{x}^{10}} + \|\d_x f\|_{L_{x}^{10}}$. We have
	\begin{equation} \label{eq:persistence_stochastic}
	\Big\|\int_{\tau_k}^{t} \sS(t-s) v_{m,\eps}(s) {\rm d}W_s \Big\|_{\xX^{1}(\iI_{k+1})} \leq \|\mM_{1}^{**}\|_{L_{t}^{\infty}(a,b)} + \|\mM_{2}^{**}\|_{L_{t}^{5}(a,b)} =: \mM_{a,b}^{**}. 
	\end{equation}
	To get an upper bound for $\mM_{a,b}^{**}$, we need to control the $\xX_1$ and $\xX_2$ norms of the following three quantities: 
	\begin{equation*}
	\int_{r_1}^{r_2} \sS(t-s) v_{m,\eps} {\rm d} W_s, \phantom{1} \int_{r_1}^{r_2} \sS(t-s) \d_x v_{m,\eps}(s) {\rm d} W_s, \phantom{1} \int_{r_1}^{r_2} \sS(t-s) v_{m,\eps}(s) {\rm d} \d_x W_s.  
	\end{equation*}
	The first two terms can be treated directly with Proposition~\ref{pr:pre_stochastic}. As for the third term, the only difference is that the noise $W$ is replaced by $\d_x W$. This amounts to replace $\|\Phi\|$ in that proposition by $\|\d_x \circ \Phi\|$ with the same norm, which is also finite by Assumption~\ref{as:Phi} and Lemma~\ref{le:factorisation}. Hence, it follows from Proposition~\ref{pr:pre_stochastic} that
	\begin{equation} \label{eq:persistence_stochastic_bound}
	\|\mM_{a,b}^{**}\|_{L_{\omega}^{\rho}} \leq C_{\rho} h^{\frac{3}{10}} \|v_{m,\eps}\|_{L_{\omega}^{\rho} \xX^{1}(a,b)}. 
	\end{equation}
	Finally, for the nonlinear term, we distinguish two cases depending on whether $\|v_{m,\eps}\|_{\xX_2(0,\tau_k)}^{5}$ reaches $2m$ or not. If $\|v_{m,\eps}\|_{\xX_2(\tau_k)}^{5} < 2m$, then the choice of $\{\tau_k\}$ guarentees that $\|v_{m,\eps}\|_{\xX_2(\iI_{k+1})} \leq \eta^{\frac{1}{5}}$. Since
	\begin{equation*}
	|\d_x \nN^{\eps}(v_{m,\eps})| \leq C |\d_x v_{m,\eps}| \cdot |v_{m,\eps}|^{4-\eps}, 
	\end{equation*}
	exchanging $\d_x$ and $\sS(t-s)$ and applying Proposition~\ref{pr:pre_nonlinear}, we get the bound
	\begin{equation} \label{eq:persistence_nonlinear}
	\Big\| \int_{\tau_k}^{t} \sS(t-s)\Big( \theta_{m}\big(\|v_{m,\eps}\|_{\xX_2(0,s)}^{5}\big) \nN^{\eps}\big( v_{m,\eps}(s) \big) \Big) {\rm d}s \Big\|_{\xX^1(\iI_{k+1})} \leq C \eta^{\frac{4-\eps}{5}} \|v_{m,\eps}\|_{\xX^{1}(\iI_{k+1})}
	\end{equation}
	if $\|v_{m,\eps}\|_{\xX_2(0,\tau_{k})} < 2m$. If $\|v_{m,\eps}\|_{\xX_2(0,\tau_{k+1})} \geq 2m$, then the nonlinearity vanishes on $\iI_{k+1}$, and hence the above bound is also true. Combining \eqref{eq:persistence_initial_correction}, \eqref{eq:persistence_stochastic} and \eqref{eq:persistence_nonlinear}, we have 
	\begin{equation*}
	\|v_{m,\eps}\|_{\xX^{1}(\iI_{k+1})} \leq C \big( \|v_{m,\eps}\|_{\xX^{1}(0,\tau_k)} + (\eta^{\frac{4-\eps}{5}}+h^{\frac{4}{5}}) \|v_{m,\eps}\|_{\xX^{1}(\iI_{k+1})} \big) + \mM_{a,b}^{**}, 
	\end{equation*}
	where the constant $C$ is universal.  If both $h$ and $\eta$ are sufficiently small (and independent of any parameter), we can absorb the term $\|v_{m,\eps}\|_{\xX^{1}(\iI_{k+1})}$ into the left hand side. Then adding $\|v_{m,\eps}\|_{\xX^{1}(0,\tau_k)}$ to both sides, we get
	\begin{equation*}
	\|v_{m,\eps}\|_{\xX^{1}(0,\tau_{k+1})} \leq C \big( \|v_{m,\eps}\|_{\xX^{1}(0,\tau_k)} + \mM_{a,b}^{**} \big). 
	\end{equation*}
	Since the number of intervals $K$ is at most $1+\frac{2m}{\eta}$, iterating the above bound over $k=0, \dots, K-1$ gives
	\begin{equation*}
	\|v_{m,\eps}\|_{\xX^{1}(0,b)} \leq C_{m} \big( \|v_{m,\eps}\|_{\xX^{1}(0,a)} + \mM_{a,b}^{**} \big). 
	\end{equation*}
	Now, we take $L_{\omega}^{\rho}$-norm on both sides. By \eqref{eq:persistence_stochastic_bound}, if $h$ is sufficiently small (but depending on $\rho$ and $m$ this time), we can absorb the term $\|\mM_{a,b}^{**}\|_{L_{\omega}^{\rho}}$ to the left hand side to obtain \eqref{eq:persistence_short}. 
	
	As for the second part of the claim, we iterate the bound \eqref{eq:persistence_short} in the interval $[0,R]$ for at most $1 + \frac{R}{h}$ steps. Since the smallness of $h$ for \eqref{eq:persistence_short} to be true depends on $\rho$ and $m$ only, the bound \eqref{eq:persistence_medium} then follows. 
\end{proof}

\begin{proof} [Proof of Proposition~\ref{pr:persistence}]
	By Lemma~\ref{le:H1_local}, we know there exists a unique $v_{m,\eps}$ that satisfies the integral equation \eqref{eq:duhamel_trun_sub} in $L_{\omega}^{\rho_0} \xX^{1}(0,R_0)$ with initial data $v_0$, where $R_0>0$ depends on $\|v_0\|_{L_{\omega}^{\rho_0}H_{x}^{1}}$ only. If $R_0 \geq 1$, then Lemma~\ref{le:persistence_short} immediately implies the desired bound. So we only need to consider the case $R_0 < 1$. 
	
	Let $C_{\rho,m}^{*} = (1+C)^{2}$ for the same $C$ as in \eqref{eq:persistence_medium}. We have replaced $R$ by $1$ since we only need to control the solution up to time $1$. As the notation suggests, $C_{\rho,m}^{*}$ depends on $\rho$ and $m$ only, but not on $R_0$. If $R_0 < 1$, then by Lemma~\ref{le:persistence_short}, we have
	\begin{equation} \label{eq:persistence_1}
	\|v_{m,\eps}(R_0/2)\|_{L_{\omega}^{\rho}H_{x}^{1}} \leq C_{\rho,m}^{*} \|v_0\|_{L_{\omega}^{\rho} H_{x}^{1}}, 
	\end{equation}
	Again by Lemma~\ref{le:H1_local}, starting from $\frac{R_0}{2}$, we can extend the solution up to time $\frac{R_0}{2} + T_0$, where $T_0$ depends on $C_{\rho,m}^{*} \|v_0\|_{L_{\omega}^{\rho_0}H_{x}^{1}}$ only. Lemma~\ref{le:persistence_short} implies the bound
	\begin{equation} \label{eq:persistence_2}
	\|v_{m,\eps}\|_{L_{\omega}^{\rho} \xX^{1}(0,1 \wedge (\frac{R_0}{2}+T_0))} \leq C_{\rho,m}^{*} \|v_0\|_{L_{\omega}^{\rho}H_{x}^{1}}, 
	\end{equation}
	where $C_{\rho,m}^{*}$ is the same as in \eqref{eq:persistence_1}. If $\frac{R_0}{2}+T_{0} > 1$, then the proof is finished. If not, we can again repeat the extension by $T_0$ while keeping the bound of the form \eqref{eq:persistence_2} with the same $C_{\rho,m}^{*}$. Hence, the process necessarily ends in finitely steps, and the same bound as \eqref{eq:persistence_1} always holds as long as the termininal time does not exceed $1$. this completes the proof of the proposition. 
\end{proof}

\subsection{Convergence with regular initial data -- proof of Proposition~\ref{pr:convergence_high}}

For $j=1,2$, let $v_{m,\eps_j}$ denotes the solution to \eqref{eq:trun_sub} with common initial data $v_0 \in L_{\omega}^{\infty} H_{x}^{1}$ and nonlinearity $\nN^{\eps_j}$. We suppose $\|v_0\|_{L_{\omega}^{\infty}H_{x}^{1}} \leq M$ (not just $L_{x}^2$). 

\begin{lem} \label{le:Cauchy_high_local}
	There exists $h=h(\rho_0,m,M)$ such that for every $\delta>0$ and every $v_0 \in L_{\omega}^{\infty}H_{x}^{1}$ with $\|v_0\|_{L_{\omega}^{\infty}H_{x}^{1}} \leq M$, there exist $\eps^{*}>0$ and $\kappa>0$ depending on $m$ and $M$ such that for every $\eps_1, \eps_2 < \eps^{*}$ and every $[a,b] \subset [0,1]$ with $b-a < h$, if
	\begin{equation*}
	\|v_{m,\eps_1} - v_{m,\eps_2}\|_{L_{\omega}^{\rho_0}\xX(0,a)} < \kappa, 
	\end{equation*}
	then we have the bound
	\begin{equation*}
	\|v_{m,\eps_1} - v_{m,\eps_2}\|_{L_{\omega}^{\rho_0}\xX(0,b)} < \delta. 
	\end{equation*}
\end{lem}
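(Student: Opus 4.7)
\textbf{Proof proposal for Lemma~\ref{le:Cauchy_high_local}.}
The plan is to run the same bootstrap/dissection scheme as in the proof of Lemma~\ref{le:stability_short}, applied to the difference $w = v_{m,\eps_1} - v_{m,\eps_2}$, but with one new ingredient: since the two processes solve equations with genuinely different nonlinearities $\nN^{\eps_1}$ and $\nN^{\eps_2}$, we will have an extra ``mismatch'' term that does not arise in Lemma~\ref{le:stability_short}. The main tool for controlling that term will be Proposition~\ref{pr:persistence}, which gives, for any $\rho\ge 5$,
\begin{equation*}
\|v_{m,\eps_j}\|_{L_\omega^{\rho}\xX^1(0,1)} \le C_{\rho,m}\|v_0\|_{L_\omega^\infty H_x^1} \le C_{\rho,m}M,
\end{equation*}
uniformly in $\eps_j\in(0,1)$. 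By the one-dimensional Sobolev embedding $H_x^1\hookrightarrow L_x^\infty$ this implies a uniform control on $\|v_{m,\eps_j}\|_{L_\omega^\rho L_t^\infty L_x^\infty}$, which is exactly what we lacked for $v_0\in L^2$.

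On a subinterval $\iI_{k+1}=[\tau_k,\tau_{k+1}]$ of a random dissection of $[a,b]$ constructed exactly as in \eqref{eq:stability_dissection} (with both thresholds involving $v_{m,\eps_1}$ and $v_{m,\eps_2}$), the Duhamel formula for $w$ contains the initial term, the It\^o--Stratonovich correction, a stochastic integral, and the nonlinear difference. The first three are controlled exactly as in \eqref{eq:stability_initial}, \eqref{eq:stability_correction}, \eqref{eq:stability_stochastic}--\eqref{eq:stability_stochastic_bound}. For the nonlinear difference, insert $\pm\,\theta_m(\|v_{m,\eps_2}\|_{\xX_2(0,s)}^5)\,\nN^{\eps_1}(v_{m,\eps_2}(s))$ to split it as
\begin{equation*}
\underbrace{\theta_m\big(\|v_{m,\eps_1}\|_{\xX_2(0,s)}^5\big)\nN^{\eps_1}(v_{m,\eps_1})-\theta_m\big(\|v_{m,\eps_2}\|_{\xX_2(0,s)}^5\big)\nN^{\eps_1}(v_{m,\eps_2})}_{\text{stability part}} + \underbrace{\theta_m\big(\|v_{m,\eps_2}\|_{\xX_2(0,s)}^5\big)\big(\nN^{\eps_1}-\nN^{\eps_2}\big)(v_{m,\eps_2})}_{\text{mismatch part}}.
\end{equation*}
The stability part is handled by exactly the case analysis (Situations 1--3) in Lemma~\ref{le:stability_short}, yielding the same kind of bound in terms of $\|w\|_{\xX(0,\tau_k)}+\|w\|_{\xX(\iI_{k+1})}$ with a constant that becomes small when the dissection threshold $\eta$ is small.

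For the mismatch term, writing $|v|^{4-\eps_1}-|v|^{4-\eps_2}=\int_{-\eps_2}^{-\eps_1}|v|^{4+\alpha}\log|v|\,d\alpha$ gives the pointwise estimate $|(\nN^{\eps_1}-\nN^{\eps_2})(v)|\le |\eps_1-\eps_2|\,g(|v|)$ with $g$ continuous and of moderate growth. On the event $A_R=\{\|v_{m,\eps_2}\|_{L_t^\infty L_x^\infty(0,1)}\le R\}$ one then has, using $\|v_{m,\eps_2}\|_{L_x^2}\le M$ from pathwise mass conservation,
\begin{equation*}
\big\|(\nN^{\eps_1}-\nN^{\eps_2})(v_{m,\eps_2})\mathbf{1}_{A_R}\big\|_{L_t^1L_x^2(\iI_{k+1})}\le h\,\omega_R(\eps^*)\,M,
\end{equation*}
with $\omega_R(\eps^*)\to 0$ as $\eps^*\to 0$ for every fixed $R$. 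On $A_R^c$, the crude bound $|\nN^{\eps_j}(v)|\le |v|^5$ together with H\"older in $\omega$ and Markov's inequality applied with a sufficiently high integrability exponent (which is finite thanks to Proposition~\ref{pr:persistence} applied at some $\rho\gg\rho_0$) gives a contribution of the order $R^{-\beta}$ for some $\beta>0$. Applying Strichartz with the dual pair $(1,2)$ as in \eqref{eq:pre_nonlinear_critical}, choosing first $R=R(\rho_0,m,M,\delta)$ large so that the $A_R^c$ contribution is smaller than $\delta/(4C)$, then $\eps^*=\eps^*(R,\delta,m,M)$ small so that the $A_R$ contribution is smaller than $\delta/(4C)$, ensures the mismatch term contributes less than $\delta/(2C)$ to $\|w\|_{L_\omega^{\rho_0}\xX(\iI_{k+1})}$.

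Collecting the four contributions, absorbing the $\|w\|_{\xX(\iI_{k+1})}$ terms into the left-hand side for $h$ small, and iterating over the $K\le 2(1+4m/\eta)$ subintervals of the dissection as in Lemma~\ref{le:stability_short}, we arrive at
\begin{equation*}
\|w\|_{L_\omega^{\rho_0}\xX(0,b)}\le C_{m,M}\,\|w\|_{L_\omega^{\rho_0}\xX(0,a)}+C_{m,M}\,\delta/(2C).
\end{equation*}
Finally, setting $\kappa=\delta/(2C_{m,M})$ yields $\|w\|_{L_\omega^{\rho_0}\xX(0,b)}<\delta$ as required. The main obstacle is the mismatch term, which does not arise in Lemma~\ref{le:stability_short}: it cannot be absorbed by any smallness coming from $\|w\|$ itself, and the splitting argument via $A_R$ and its complement is the key device that exploits precisely the gain in regularity provided by Proposition~\ref{pr:persistence} (and hence the $H_x^1$ hypothesis on $v_0$), which is why this lemma must be stated for regularized initial data rather than general $L^2$ data.
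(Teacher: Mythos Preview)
Your proposal is correct and follows essentially the same route as the paper. The paper organises the threshold splitting slightly differently: it introduces
\[
\Omega_\Lambda = \big\{\|v_{m,\eps_1}\|_{L_t^\infty H_x^1(0,1)}\le\Lambda,\ \|v_{m,\eps_2}\|_{L_t^\infty H_x^1(0,1)}\le\Lambda\big\}
\]
and splits the \emph{whole difference} $w$ (not just the mismatch term) as $w\mathbf{1}_{\Omega_\Lambda}+w\mathbf{1}_{\Omega_\Lambda^c}$. On $\Omega_\Lambda^c$ it bounds $\|w\,\mathbf{1}_{\Omega_\Lambda^c}\|_{L_\omega^{\rho_0}\xX(0,b)}$ directly by H\"older and Proposition~\ref{pr:persistence} at exponent $2\rho_0$, getting $C\Lambda^{-1/2}$; on $\Omega_\Lambda$ it runs the pathwise dissection and iteration exactly as you describe, with the mismatch controlled pointwise by $C M\Lambda^5|\eps_1-\eps_2|$ via $H_x^1\hookrightarrow L_x^\infty$. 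Your variant, which splits only the mismatch term into $A_R$ and $A_R^c$, is equivalent in content but needs one organisational caveat: since the subintervals $\iI_{k+1}$ of the dissection are random, the quantity ``$\|w\|_{L_\omega^{\rho_0}\xX(\iI_{k+1})}$'' is not well defined, and you cannot take $L_\omega^{\rho_0}$ of the $A_R^c$ piece before iterating over $k$. Either carry the pathwise bound $\omega_R(\eps^*)\mathbf{1}_{A_R}+C\|v_{m,\eps_2}\|_{L_t^\infty L_x^\infty}^4\mathbf{1}_{A_R^c}$ through the (deterministically bounded) iteration and apply H\"older/Markov on $A_R^c$ only after summing, or adopt the paper's cleaner device of restricting the entire pathwise argument to $A_R$ and handling $\|w\,\mathbf{1}_{A_R^c}\|_{L_\omega^{\rho_0}\xX(0,b)}$ separately. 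With that adjustment the two proofs coincide.
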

\begin{proof}
	The lemma involves comparison between two nonlinearities with different powres, which needs the information of $L_{x}^{\infty}$-norm of $v_{m,\eps_j}$. For every $\Lambda > 1$, let
	\begin{equation*}
	\Omega_{\Lambda} = \big\{ \omega \in \Omega: \|v_{m,\eps_1}\|_{L_{t}^{\infty}H_{x}^{1}(0,1)} \leq \Lambda,\; \|v_{m,\eps_2}\|_{L_{t}^{\infty}H_{x}^{1}(0,1)} \leq \Lambda \big\}. 
	\end{equation*}
	We split $\|v_{m,\eps_1} - v_{m,\eps_2}\|_{\xX(0,b)}$ into two parts, depending on whether $\omega \in \Omega_{\Lambda}$ or not. We start with $\Omega_{\Lambda}^{c}$. 
	
	By Proposition~\ref{pr:persistence}, there exists $C > 0$ depending on $m$, $M$ and $\rho_0$ such that
	\begin{equation*}
	\Pr (\Omega_{\Lambda}^{c}) \leq \frac{C}{\Lambda^{\rho_0}}
	\end{equation*}
	for every $\Lambda > 1$. Hence, using H\"older's inequality and Proposition~\ref{pr:persistence} with $\rho=2\rho_0$, we get
	\begin{equation} \label{eq:cauchy_average_1}
	\|(v_{m,\eps_1}-v_{m,\eps_2}) \1_{\Omega_{\Lambda}^{c}}\|_{L_{\omega}^{\rho_0} \xX(0,b)} \leq \|v_{m,\eps_1}-v_{m,\eps_2}\|_{L_{\omega}^{2\rho_0}\xX(0,b)} \big( \Pr(\Omega_{\Lambda}^{c}) \big)^{\frac{1}{2\rho_0}} \leq \frac{C}{\sqrt{\Lambda}}. 
	\end{equation}
	We now turn to $\|(v_{m,\eps_1}-v_{m,\eps_2}) \1_{\Omega_\Lambda}\|_{L_{\omega}^{\rho_0}\xX(0,b)}$. This part follows the same line of argument as that in Lemma~\ref{le:stability_short}, except the control of the nonlinearity is different since it involves two different powers. We first quickly go through the part that resembles Lemma~\ref{le:stability_short}. 
	
	For every $\omega \in \Omega$ (hence including $\Omega_\Lambda$ in particular), let
	\begin{equation*}
	a = \tau_0 < \tau_1 < \cdots < \tau_K = b
	\end{equation*}
	be the dissection of the interval $[a,b]$ chosen according to \eqref{eq:stability_dissection}, except that one replaces $u_{m,\eps}$ and $v_{m,\eps}$ by $v_{m,\eps_1}$ and $v_{m,\eps_2}$. With the same replacement, the bounds \eqref{eq:stability_initial}, \eqref{eq:stability_correction} and \eqref{eq:stability_stochastic} also carry straightforwardly, so for $\iI_{k+1} = [\tau_k, \tau_{k+1}]$, we have
	\begin{equation*}
	\begin{split}
	\|v_{m,\eps_1} - v_{m,\eps_2}\|_{\xX(\iI_{k+1})} &\leq C \Big( \|v_{m,\eps_1} - v_{m,\eps_2}\|_{\xX(0,\tau_k)} + h^{\frac{4}{5}} \|v_{m,\eps_1} - v_{m,\eps_2}\|_{\xX(\iI_{k+1})} \Big)\\
	&\phantom{11}+ \mM_{a,b}^{*} + \|\dD_{\tau_k}\|_{\xX(\iI_{k+1})}, 
	\end{split}
	\end{equation*}
	where
	\begin{equation*}
	\begin{split}
	\dD_{\tau_k}(t) &= - i \int_{\tau_k}^{t} \sS(t-s) \Big( \theta_{m}\big(\|v_{m,\eps_1}\|_{\xX_{2}(0,s)}\big) \nN^{\eps_1} \big(v_{m,\eps_1}(s)\big)\\
	&\phantom{11}- \theta_{m}\big(\|v_{m,\eps_2}\|_{\xX_{2}(0,s)}\big) \nN^{\eps_2} \big(v_{m,\eps_2}(s)\big) \Big) {\rm d}s, 
	\end{split}
	\end{equation*}
	and $\mM_{a,b}^{*} \geq 0$ is defined in the same way as \eqref{eq:stability_stochastic} and satisfies the moment bound
	\begin{equation} \label{eq:cauchy_stochastic}
	\|\mM_{a,b}^{*}\|_{L_{\omega}^{\rho_0}} \leq C h^{\frac{3}{10}} \|v_{m,\eps_1} - v_{m,\eps_2}\|_{L_{\omega}^{\rho_0} \xX(a,b)}. 
	\end{equation}
	Again, if $h$ is sufficiently small, we can absorb the term $\|v_{m,\eps_1} - v_{m,\eps_2}\|_{\xX(\iI_{k+1})}$ into the left hand side so that
	\begin{equation} \label{eq:cauchy_deter_1}
	\|v_{m,\eps_1} - v_{m,\eps_2}\|_{\xX(\iI_{k+1})} \leq C \Big( \|v_{m,\eps_1} - v_{m,\eps_2}\|_{\xX(0,\tau_k)} + \mM_{a,b}^{*} + \|\dD_{\tau_k}\|_{\xX(\iI_{k+1})} \Big). 
	\end{equation}
	So far it has been exactly the same as in Lemma~\ref{le:stability_short}, and the bound \eqref{eq:cauchy_deter_1} holds pathwise with a deterministic constant $C$. Now the difference comes in the bound for $\dD_{\tau_k}$, as the two nonlinearities have different powers, so we need information of the $L_{x}^{\infty}$ norms of $v_{m,\eps_j}$ to control their difference. This is why we need a solution in $H_x^1$. 
	
	We now start to control $\dD_{\tau_k}$. We split the integrand in $\dD_{\tau_k}$ by
	\begin{equation*}
	\begin{split}
	&\phantom{111}\theta_{m}\big(\|v_{m,\eps_1}\|_{\xX_2(0,s)}^{5}\big) \nN^{\eps_1}\big(v_{m,\eps_1}(s)\big) - \theta_{m}\big(\|v_{m,\eps_2}\|_{\xX_2(0,s)}^{5}\big) \nN^{\eps_2}\big(v_{m,\eps_2}(s)\big)\\
	&= \Big( \theta_{m}\big(\|v_{m,\eps_1}\|_{\xX_2(0,s)}^{5}\big) \nN^{\eps_1}\big(v_{m,\eps_1}(s)\big) - \theta_{m}\big(\|v_{m,\eps_2}\|_{\xX_2(0,s)}^{5}\big) \nN^{\eps_1}\big(v_{m,\eps_2}(s)\big) \Big)\\
	&+ \Big( \theta_{m}\big(\|v_{m,\eps_2}\|_{\xX_2(0,s)}^{5}\big) \nN^{\eps_1}\big(v_{m,\eps_2}(s)\big) - \theta_{m}\big(\|v_{m,\eps_2}\|_{\xX_2(0,s)}^{5}\big) \nN^{\eps_2}\big(v_{m,\eps_2}(s)\big) \Big),
	\end{split} 
	\end{equation*}
	and let $\dD_{\tau_k} = \dD_{\tau_k}^{(1)} + \dD_{\tau_k}^{(2)}$, corresponding to the two terms in the above decomposition of the nonlinearity respectively. For $\dD_{\tau_k}^{(1)}$, since the powers of the two nonlinearities are the same (both are $\eps_1$), we have exactly the same bound as in \eqref{eq:stability_nonlinear}, so that
	\begin{equation} \label{eq:cauchy_N_1}
	\|\dD_{\tau_k}^{(1)}\|_{\xX(\iI_{k+1})} \leq C_{m,M} \cdot \eta^{\frac{4-\eps}{5}} \Big( \|v_{m,\eps_1} - v_{m,\eps_2}\|_{\xX(0,\tau_k)} + \|v_{m,\eps_1}-v_{m,\eps_2}\|_{\xX(\iI_{k+1})} \Big). 
	\end{equation}
	Here, $\eta$ is the small increment of the $5$-th power of $\xX_2$-norm in the dissection (see \eqref{eq:stability_dissection}). Its value will be specified below. 
	
	So far all the bounds above hold on all of $\Omega$. But for $\dD_{\tau_k}^{(2)}$, the two nonlinearities have different powers for the same input $v_{m,\eps_2}$, so we need a bound for $\|v_{m,\eps}\|_{L_{t}^{\infty}L_{x}^{\infty}}$. This is where we use $\omega \in \Omega_\Lambda$. Since $(1,2)$ is dual of the Strichartz pair $(+\infty,2)$, applying the Strichartz estimates \eqref{eq:Strichartz_2}, we get
	\begin{equation*}
	\begin{split}
	\|\dD_{\tau_k}^{(2)}\|_{\xX(\iI_{k+1})} &\leq C \big\| v_{m,\eps_2} \big( |v_{m,\eps_2}|^{4-\eps_1} - |v_{m,\eps_2}|^{4-\eps_2} \big) \big\|_{L_{t}^{1}L_{x}^{2}(\iI_{k+1})}\\
	&\leq C \|v_{m,\eps_2}\|_{\xX_{1}(\iI_{k+1})} \cdot \big\|\;|v_{m,\eps_2}|^{4-\eps_1} - |v_{m,\eps_2}|^{4-\eps_2} \big\|_{L_{t}^{\infty} L_{x}^{\infty}(\iI_{k+1})}. 
	\end{split}
	\end{equation*}
	Since we are in dimension one, we have
	\begin{equation*}
	\|v_{m,\eps_j}\|_{L_{x}^{\infty}}^{2} \leq 2 \|v_{m,\eps_j}\|_{L_{x}^{2}} \|\d_x v_{m,\eps_j}\|_{L_{x}^{2}} \leq 2 \Lambda^{2}, 
	\end{equation*}
	where the first inequality follows from Newton-Leibniz for the function $(v_{m,\eps_j})^{2}$, and in the second inequality we have used the assumption that $\omega \in \Omega_{\Lambda}$. Hence, we have the pointwise bound
	\begin{equation*}
	\Big| \big(|v_{m,\eps_2}|^{4-\eps_1} - |v_{m,\eps_2}|^{4-\eps_2}\big) \1_{\Omega_\Lambda} \Big| \leq C \Lambda^{5} |\eps_1 - \eps_2|. 
	\end{equation*}
	Plugging it back to the bound for $\dD_{\tau_k}^{(2)}$ above, we get
	\begin{equation} \label{eq:cauchy_N_2}
	\|\dD_{\tau_k,2}^{(2)} \1_{\Omega_\Lambda}\|_{\xX(\iI_{k+1})} \leq C M \Lambda^{5} |\eps_1 - \eps_2| \leq C M \Lambda^{5} \eps^{*}. 
	\end{equation}
	Plugging \eqref{eq:cauchy_N_1} and \eqref{eq:cauchy_N_2} back into \eqref{eq:cauchy_deter_1}, and letting $\eta$ be small enough so that we can merge the term $\|v_{m,\eps_1} - v_{m,\eps_2}\|_{\xX_2(\iI_{k+1})}$ arising from $\|\dD_{\tau_k}^{(1)}\|_{\xX(\iI_{k+1})}$ to the left hand side, we get
	\begin{equation*}
	\|(v_{m,\eps_1} - v_{m,\eps_2}) \1_{\Omega_\Lambda}\|_{\xX(\iI_{k+1})} \leq C_{m,M} \Big( \|(v_{m,\eps_1} - v_{m,\eps_2}) \1_{\Omega_\Lambda}\|_{\xX(0,\tau_k)} +  \Lambda^{5} \eps^{*} + \mM_{a,b}^{*} \Big). 
	\end{equation*}
	Again, iterating this bound over the intervals $\iI_1, \dots, \iI_K$ and adding $\|(v_{m,\eps_1} - v_{m,\eps_2}) \1_{\Omega_{\Lambda}}\|_{\xX(0,a)}$ to both sides, we get
	\begin{equation*}
	\|(v_{m,\eps_1} - v_{m,\eps_2}) \1_{\Omega_\Lambda}\|_{\xX(0,b)} \leq C_{m,M} \Big( \|(v_{m,\eps_1} - v_{m,\eps_2}) \1_{\Omega_\Lambda}\|_{\xX(0,a)} + \Lambda^{5} \eps^{*} + \mM_{a,b}^{*} \Big). 
	\end{equation*}
	Taking $L_{\omega}^{\rho_0}$-norm on both sides and using the bound \eqref{eq:cauchy_stochastic}, we get
	\begin{equation} \label{eq:cauchy_average_2}
	\begin{split}
	\|(v_{m,\eps_1} - v_{m,\eps_2}) \1_{\Omega_{\Lambda}}\|_{L_{\omega}^{\rho_0}\xX(0,b)} \leq &C_{m,M} \Big( \|v_{m,\eps_1} - v_{m,\eps_2} \|_{L_{\omega}^{\rho_0} \xX(0,a)} + \Lambda^{5} \eps^{*}\\
	&+ h^{\frac{3}{10}} \|v_{m,\eps_1} - v_{m,\eps_2} \|_{L_{\omega}^{\rho_0} \xX(a,b)} \Big), 
	\end{split}
	\end{equation}
	where the constant $C_{m,M}$ is deterministic and depends on $m$ and $M$ only, and we have also relaxed the right hand side by removing $\1_{\Omega_\Lambda}$ in the first term. 
	
	Now, adding \eqref{eq:cauchy_average_1} and \eqref{eq:cauchy_average_2} together, we get a bound for $\|(v_{m,\eps_1} - v_{m,\eps_2})\|_{L_{\omega}^{\rho_0}\xX(0,b)}$. Hence, if we choose $h$ sufficiently small (depending on $m$ and $M$ this time), we can again absorb the term $\|v_{m,\eps_1} - v_{m,\eps_2}\|_{L_{\omega}^{\rho_0} \xX(a,b)}$ to the left hand side of the bound. This gives us
	\begin{equation} \label{eq:cauchy_local}
	\|v_{m,\eps_1} - v_{m,\eps_2}\|_{L_{\omega}^{\rho_0}\xX(0,b)} \leq C^{*} \Big( \|v_{m,\eps_1} - v_{m,\eps_2}\|_{L_{\omega}^{\rho_0}\xX(0,a)} + \Lambda^{5} \eps^{*} + \Lambda^{-\frac{1}{2}} \Big), 
	\end{equation}
	where $C^{*} = C^{*}(m,M,\tilde{M})$. Now, for every $\delta>0$, we choose $\kappa$ small enough so that $C^{*} \kappa < \frac{\delta}{3}$. We then choose$\Lambda$ large enough so that $C^{*} \Lambda^{-\frac{1}{2}} < \frac{\delta}{3}$, and finally $\eps^{*}$ small so that $C^{*} \Lambda^{5} \eps^{*} < \frac{\delta}{3}$. This completes the proof of the lemma. 
\end{proof}

\begin{proof} [Proof of Proposition~\ref{pr:convergence_high}]
	It suffices to show that for every $\delta>0$, there exists $\eps^{*}>0$ such that
	\begin{equation} \label{eq:high_aim}
	\|v_{m,\eps_1} - v_{m,\eps_2}\|_{L_{\omega}^{\rho_0}\xX(0,1)} < \delta
	\end{equation}
	whenever $\eps_1, \eps_2 < \eps^{*}$. The proof is a backward induction argument using Lemma~\ref{le:Cauchy_high_local}. Fix arbitrary $\delta>0$, and let $\kappa^{(0)} = \delta$. By Lemma~\ref{le:Cauchy_high_local}, there exists finite positive sequence $\{\eps^{(n)}, \kappa^{(n)}\}_{n=1}^{N(h)}$ such that
	\begin{equation*}
	\eps_1, \eps_2 < \eps^{(n)} \quad \text{and} \quad \|v_{m,\eps_1} - v_{m,\eps_2}\|_{L_{\omega}^{\rho_0}\xX(0,1-nh)} < \kappa^{(n)}
	\end{equation*}
	implies $\|v_{m,\eps_1} - v_{m,\eps_2}\|_{L_{\omega}^{\rho_0}\xX(0,1-(n-1)h)} < \kappa^{(n-1)}$. Here, $h$ is the same as in Lemma~\ref{le:Cauchy_high_local} and $N(h) = 1 + \lfl \frac{1}{h} \rfl$. We then take
	\begin{equation*}
	\eps^{*} = \min_{n \leq N(h)} \eps^{(n)}. 
	\end{equation*}
	The proof is complete by noting that $v_{m,\eps_1}$ and $v_{m,\eps_2}$ have the same initial data so they start with zero difference. 
\end{proof}

\subsection{Proof of Proposition~\ref{pr:converge_e}}

We are now ready to prove the convergence in $\eps$. Let $u_{m,\eps_1}$ and $u_{m,\eps_2}$ denote the solutions to \eqref{eq:trun_sub} with common initial data $u_0$ and nonlinearities $\nN^{\eps_1}$ and $\nN^{\eps_2}$ respectively. We first need the following lemma to perturb the initial data to $H_{x}^{1}$. 

\begin{lem} \label{le:initial_smoothen}
	For every $u_0 \in L_{\omega}^{\infty}L_{x}^{2}$ and every $\kappa > 0$, there exists $v_0 \in L_{\omega}^{\infty}H_{x}^{1}$ such that $\|v_0\|_{L_x^2} \leq \|u_0\|_{L_x^2}$ almost surely, and
	\begin{equation*}
	\|u_0 - v_0\|_{L_{\omega}^{\rho_0}L_{x}^{2}} < \kappa. 
	\end{equation*}
\end{lem}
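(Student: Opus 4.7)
The plan is to regularize $u_0$ via a smoothing operator that (i) is a contraction on $L^2$, (ii) maps $L^2$ into $H_x^1$, and (iii) converges to the identity strongly on $L^2$ as the smoothing parameter vanishes. A convenient choice is the heat semigroup: fix $t > 0$ (to be chosen small) and set $v_0 := e^{t\Delta} u_0$. Since $e^{t\Delta}$ is a deterministic bounded linear operator on $L_x^2$ and $u_0$ is $\fF_0$-measurable, $v_0$ is also $\fF_0$-measurable.

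The pathwise bound $\|v_0\|_{L_x^2} \leq \|u_0\|_{L_x^2}$ is immediate because $e^{t\Delta}$ is a contraction on $L^2$. For the $H_x^1$ regularity, the standard smoothing estimate $\|\partial_x e^{t\Delta} f\|_{L_x^2} \leq C t^{-1/2} \|f\|_{L_x^2}$ yields
\[
\|v_0\|_{H_x^1} \leq (1+Ct^{-1/2}) \|u_0\|_{L_x^2} \leq (1+Ct^{-1/2}) \|u_0\|_{L_\omega^\infty L_x^2}
\]
almost surely, so $v_0 \in L_\omega^\infty H_x^1$ for every $t > 0$.

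It remains to verify $\|u_0 - v_0\|_{L_\omega^{\rho_0} L_x^2} < \kappa$ for $t$ sufficiently small. For each fixed $\omega$ such that $u_0(\omega) \in L_x^2$, standard heat-semigroup theory gives $e^{t\Delta} u_0(\omega) \to u_0(\omega)$ in $L_x^2$ as $t \to 0^+$. Thus the random variable $\|u_0 - e^{t\Delta} u_0\|_{L_x^2}^{\rho_0}$ tends to $0$ almost surely while being uniformly bounded above by $(2\|u_0\|_{L_\omega^\infty L_x^2})^{\rho_0} < \infty$. Dominated convergence then yields $\EE\|u_0 - v_0\|_{L_x^2}^{\rho_0} \to 0$ as $t \to 0^+$, and we choose $t$ small enough to make this quantity less than $\kappa^{\rho_0}$.

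I do not expect any genuine obstacle in this argument; the only point worth highlighting is that the hypothesis $u_0 \in L_\omega^\infty L_x^2$ (rather than a weaker $L_\omega^{\rho_0}$ assumption) is precisely what provides the uniform dominating function needed to upgrade pathwise $L_x^2$ convergence of the heat regularization to convergence in $L_\omega^{\rho_0} L_x^2$.
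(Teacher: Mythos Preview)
Your proof is correct. Both your argument and the paper's use convolution with an approximate identity (you use the heat kernel, the paper uses a generic mollifier $\varphi_\delta$), and both exploit the $L^2$-contraction property of convolution to get the pathwise bound $\|v_0\|_{L_x^2} \leq \|u_0\|_{L_x^2}$ and the smoothing to land in $L_\omega^\infty H_x^1$. The genuine difference is in how the $L_\omega^{\rho_0}L_x^2$ convergence is obtained: the paper invokes Egorov's theorem to extract a small exceptional set $\Omega'$ on whose complement the convergence is uniform, and then controls the two pieces separately; you instead apply dominated convergence directly, using the uniform bound $(2\|u_0\|_{L_\omega^\infty L_x^2})^{\rho_0}$ as the dominating function. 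Your route is shorter and more transparent here, since the $L_\omega^\infty$ hypothesis gives exactly the uniform domination that makes Egorov unnecessary. The paper's Egorov argument would still work under the weaker hypothesis that $\|u_0 - u_{0,\delta}\|_{L_x^2}$ is merely bounded in $L_\omega^\infty$ without a common integrable majorant, but in the present setting that extra generality is not used.
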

\begin{proof}
	We fix $u_0 \in L_{\omega}^{\infty}L_{x}^{2}$ and $\kappa > 0$ arbitrary. Let $\varphi$ be a mollifier on $\RR$, and $\varphi_{\delta} = \delta^{-1} \varphi(\cdot / \delta)$. For every $\omega \in \Omega$, let $u_{0,\delta}(\omega) = u_0(\omega) * \varphi_{\delta}$. By Young's inequality, we have
	\begin{equation} \label{eq:initial_convolution}
	\|u_{0,\delta}(\omega)\|_{L_{x}^{2}} \leq \|u_{0}(\omega)\|_{L_{x}^{2}}, \quad \|u_{0,\delta}(\omega)\|_{\dot{H}_{x}^{1}} \leq \delta^{-1} \|\varphi'\|_{L_{x}^{1}} \|u_{0}(\omega)\|_{L_{x}^{2}}
	\end{equation}
	for almost every $\omega$ and every $\delta>0$. In addition, 
	\begin{equation*}
	\|u_{0,\delta}(\omega) - u_{0}(\omega)\|_{L_{x}^{2}} \rightarrow 0
	\end{equation*}
	as $\delta \rightarrow 0$ for almost every $\omega$. Hence, by Egorov's theorem, there exists $\Omega' \subset \Omega$ with
	\begin{equation*}
	\Pr(\Omega') \leq \Big( \frac{\kappa}{4 \|u_0\|_{L_{\omega}^{\infty}L_{x}^{2}}} \Big)^{\rho_0}
	\end{equation*}
	such that on $\Omega \setminus \Omega'$, we have
	\begin{equation*}
	\sup_{\omega \in \Omega \setminus \Omega'} \|u_{0,\delta}(\omega) - u_{0}(\omega)\|_{L_{x}^{2}} \rightarrow 0
	\end{equation*}
	as $\delta \rightarrow 0$. Thus, we can choose $\delta = \delta^{*}$ small enough so that
	\begin{equation*}
	\sup_{\omega \in \Omega \setminus \Omega'} \|u_{0,\delta^{*}}(\omega) - u_{0}(\omega)\|_{L_{x}^{2}} < \frac{\kappa}{2}. 
	\end{equation*}
	Let $v_{0}(\omega) = u_{0,\delta^{*}}(\omega)$. It then follows immediately from \eqref{eq:initial_convolution} that $v_0 \in L_{\omega}^{\infty} H_{x}^{1}$ and $\|v_0\|_{L_{x}^{2}} \leq \|u_0\|_{L_{x}^{2}}$ almost surely. As for $\|u_0-v_0\|_{L_{\omega}^{\rho_0}L_{x}^{2}}$, we have
	\begin{equation*}
	\begin{split}
	\|u_0 - v_0\|_{L_{\omega}^{\rho_0} L_{x}^{2}} &\leq \|(u_0-v_0) \1_{\Omega \setminus \Omega'}\|_{L_{\omega}^{\rho_0}L_{x}^{2}} + \|(u_0-v_0) \1_{\Omega'}\|_{L_{\omega}^{\rho_0}L_{x}^{2}}\\
	&\leq \frac{\kappa}{2} + \big( \|u_0\|_{L_{\omega}^{\infty}L_{x}^{2}} + \|v_0\|_{L_{\omega}^{\infty}L_{x}^{2}} \big) \big( \Pr(\Omega')\big)^{\frac{1}{\rho_0}}\\
	&\leq \kappa. 
	\end{split}
	\end{equation*}
	This completes the proof. 
\end{proof}

\begin{proof} [Proof of Proposition~\ref{pr:converge_e}]
	We first show that, for every fixed $m$, the sequence $\{u_{m,\eps}\}_{\eps>0}$ is Cauchy in $L_{\omega}^{\rho_0} \xX(0,1)$. Fix an arbitrary $\delta>0$. For every $\kappa>0$, by Lemma~\ref{le:initial_smoothen}, we can choose $v_0 \in L_{\omega}^{\infty}H_{x}^{1}$ such that
	\begin{equation*}
	\|v_0\|_{L_{\omega}^{\infty}L_{x}^{2}} \leq \|u_0\|_{L_{\omega}^{\infty}L_{x}^{2}} \leq M, \quad \|u_0 - v_0\|_{L_{\omega}^{\rho_0}L_{x}^{2}} < \kappa. 
	\end{equation*}
	For every $\eps_1$ and $\eps_2$, we have
	\begin{equation*}
	\begin{split}
	\|u_{m,\eps_1} - u_{m,\eps_2}\|_{L_{\omega}^{\rho_0}\xX(0,1)} \leq &\|u_{m,\eps_1} - v_{m,\eps_1}\|_{L_{\omega}^{\rho_0}\xX(0,1)} + \|u_{m,\eps_2} - v_{m,\eps_2}\|_{L_{\omega}^{\rho_0}\xX(0,1)}\\
	&+ \|v_{m,\eps_1} - v_{m,\eps_2}\|_{L_{\omega}^{\rho_0} \xX(0,1)}, 
	\end{split}
	\end{equation*}
	where $v_{m,\eps_j}$ is the solution to \eqref{eq:duhamel_trun_sub} with initial data $v_0$ and nonlinearity $\nN^{\eps_j}$. By Proposition~\ref{pr:uniform_stable}, we can let $\kappa$ be sufficiently small so that the first two terms on the right hand side above are both smaller than $\frac{\delta}{3}$, uniformly in $\eps_1, \eps_2 \in (0,1)$. As for the third term, by Proposition~\ref{pr:convergence_high}, we can choose $\eps^{*}$ sufficiently small so that this term is also smaller than $\frac{\delta}{3}$ as long as $\eps_1, \eps_2 < \eps^{*}$. This shows that $\{u_{m,\eps}\}_{\eps}$ is Cauchy in $L_{\omega}^{\rho_0}\xX(0,1)$, and hence has a limit in the same space, which we denote by $u_m$. 
	
	It then remains to show that the limit $u_{m}$ satisfies the equation \eqref{eq:trun_critical}. To see this, it suffices to show that each term on the right hand side of \eqref{eq:duhamel_trun_sub} converges in $L_{\omega}^{\rho_0}\xX(0,1)$ to the corresponding term with $u_{m,\eps}$ replaced by $u_{m}$. The term with the initial data are identical for all $\eps \geq 0$. The convergence of
	\begin{equation*}
	\int_{0}^{t} \sS(t-s) u_{m,\eps}(s) {\rm d} W_s \quad \text{and} \quad \int_{0}^{t} \sS(t-s) \big( F_{\Phi} u_{m,\eps}(s) \big) {\rm d}s
	\end{equation*}
	follows immediately from the bounds in Propositions~\ref{pr:pre_stochastic} and~\ref{pr:pre_correction} and that $\|u_{m,\eps}-u_{m}\|_{L_{\omega}^{\rho_0}\xX(0,1)} \rightarrow 0$. As for the term with the nonlinearity, one needs to compare $\nN^{\eps}(u_{m,\eps})$ and $\nN(u_{m})$, where we recall $\nN(u_m) = |u_m|^{4} u_m$. Hence, we proceed as above to perturb the initial data to $v_0 \in L_{\omega}^{\infty}H_x^1$. Let $v_{m,\eps}$ denotes the solution to \eqref{eq:duhamel_trun_sub} and $v_m$ denotes its limit in $L_{\omega}^{\rho_0}\xX(0,1)$ as $\eps \rightarrow 0$. We then write
	\begin{equation*}
	\begin{split}
	&\phantom{111}\theta_{m}\big(\|u_{m,\eps}\|_{\xX_2(0,s)}^{5} \big) \nN^{\eps}(u_{m,\eps}) - \theta_{m} \big(\|u_{m}\|_{\xX_2(0,s)}^{5}\big) \nN(u_m)\\
	&= \theta_{m}\big(\|u_{m,\eps}\|_{\xX_2(0,s)}^{5}\big) \nN^{\eps}(u_{m,\eps}) - \theta_{m} \big(\|v_{m,\eps}\|_{\xX_2(0,s)}^{5}\big) \nN^{\eps}(v_{m,\eps})\\
	&+ \theta_{m} \big(\|v_{m,\eps}\|_{\xX_2(0,s)}^{5}\big) \nN^{\eps}(v_{m,\eps}) - \theta_{m} \big(\|v_{m}\|_{\xX_2(0,s)}^{5}\big) \nN(v_{m})\\
	&+ \theta_{m} \big(\|v_{m}\|_{\xX_2(0,s)}^{5}\big) \nN(v_{m}) - \theta_{m} \big(\|u_{m}\|_{\xX_2(0,s)}^{5}\big) \nN(u_m). 
	\end{split}
	\end{equation*}
	By Proposition~\ref{pr:uniform_stable}, we have
	\begin{equation*}
	\sup_{\eps \in [0,1]} \|u_{m,\eps} - v_{m,\eps}\|_{L_{\omega}^{\rho_0} \xX(0,1)} \leq \|u_0 - v_0\|_{L_{\omega}^{\rho_0}L_{x}^{2}}, 
	\end{equation*}
	where $\eps=0$ case corresponds to $u_m - v_m$. Thus, with Strichartz estimate \eqref{eq:Strichartz_2} and Lemma~\ref{le:initial_smoothen}, we can make the nonlinearity from the first and third terms above arbitrarily small by making $\|u_0 - v_0\|_{L_{\omega}^{\rho_0}L_{x}^{2}}$ small enough. As for the second term, we can get the desired bound from Lemma~\ref{le:persistence_short} with $\rho=2\rho_0$ and Proposotion~\ref{pr:persistence} with $\rho = 2\rho_0$ in exactly the same way as above. Note that the arguments here are only much simpler since we already have apriori bounds on $\|\uem - \vem\|_{L_{\omega}^{\rho_0}\xX(0,1)}$ and $\|\vem - v_m\|_{L_{\omega}^{\rho_0}\xX(0,1)}$. We can then conclude that the limit $u_{m}$ solves the truncated critical equation \eqref{eq:trun_critical}. 
\end{proof}

\section{Uniform boundedness of $u_m$ -- proof of Proposition~\ref{pr:uniform_m}}
\label{sec:uniform_m}

This section is devoted to the proof of Proposition~\ref{pr:uniform_m}. The main ingredients are a series of deterministic boundedness and stability statements, whose proof rely on the recent scattering results of mass-critical NLS by Dodson (\cite{dodson2012global, dodson2016global, dodson2467global}). We will first review these results in Section~\ref{sec:Dodson}. In Section~\ref{sec:deterministic_m}, we state and prove the uniform-in-$m$ version of the theorems by Dodson, and then use them to prove the key deterministic bound in Proposition~\ref{pr:main_deter_sta}. Finally, in Section~\ref{sec:pf_uniform_m}, we complete the proof of Proposition~\ref{pr:uniform_m} by using the bound in the previous subsection. 

Throughout this section, we fix the interval $\iI=[a,b]$ with $b-a \leq 1$, and all the bounds are uniform of the intervals with this constraint.

\subsection{Brief review of stability for the mass-critical NLS}
\label{sec:Dodson}

Let $\mu \in [0,1]$. Let $w \in \xX(\iI)$ be the solution to
\begin{equation} \label{eq:nls_w}
i \d_t w + \Delta w = \mu |w|^{4} w\;, \qquad w(a) \in L_{x}^{2}, 
\end{equation}
and $v \in \xX(\iI)$ and $e \in L_{t}^{1}L_{x}^{2}(\iI)$ such that
\begin{equation} \label{eq:nls_v}
i \d_t v + \Delta v = \mu |v|^4 v + e\;, \qquad v(a) \in L_{x}^{2}. 
\end{equation}
We call \eqref{eq:nls_w} the mass-critical NLS with parameter $\mu$. The following stability result is well known. 

\begin{prop} \label{pr:iteam_sta}
	Let $w \in \xX(\iI)$ be the solution to \eqref{eq:nls_w}. Let $v,e$ satisfy \eqref{eq:nls_v}. Then for every $M_1, M_2 > 0$, there exist $\delta$ and $C$ depending on $M_1$ and $M_2$ only such that if
	\begin{equation*}
	\|v\|_{\xX_1(\iI)} \leq M_1, \quad \|v\|_{\xX_2(\iI)} \leq M_2, \quad \|v(a)-w(a)\|_{L_{x}^{2}} + \|e\|_{L_{t}^{1}L_{x}^{2}(\iI)} \leq \delta, 
	\end{equation*}
	then we have
	\begin{equation*}
	\|v-w\|_{\xX(\iI)} \leq C \big( \|v(a)-w(a)\|_{L_x^2} + \|e\|_{L_{t}^{1}L_{x}^{2}(\iI)} \big). 
	\end{equation*}
	In particular, $\delta$ and $C$ do not depend on $\mu \in [0,1]$. 
\end{prop}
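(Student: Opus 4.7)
The strategy is a standard perturbative stability argument that proceeds by partitioning $\iI$ according to the $\xX_2$-norm of $v$ and iterating Strichartz on each piece. Set $\zeta = v - w$, which satisfies
\begin{equation*}
i \d_t \zeta + \Delta \zeta = \mu \big(|v|^4 v - |w|^4 w\big) - e, \qquad \zeta(a) = v(a) - w(a).
\end{equation*}
Since $\|v\|_{\xX_2(\iI)} \leq M_2$, I would first partition $\iI$ into at most $N \leq 1 + (M_2/\eta)^5$ consecutive subintervals $\iI_j = [a_j, a_{j+1}]$ on which $\|v\|_{\xX_2(\iI_j)} \leq \eta$, for a small $\eta = \eta(M_1,M_2) \in (0,1)$ to be fixed later.

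On each $\iI_j$ I would apply the Strichartz estimate from Proposition~\ref{pr:Strichartz} to Duhamel's formula for $\zeta$. Combining the pointwise bound $\big||v|^4 v - |w|^4 w\big| \leq C(|v|^4 + |w|^4)|\zeta|$ with H\"older in the dual Strichartz pair $(1,2)$, the triangle inequality $\|w\|_{\xX_2(\iI_j)} \leq \eta + \|\zeta\|_{\xX_2(\iI_j)}$, and the fact that $\mu \in [0,1]$, I obtain
\begin{equation*}
\|\zeta\|_{\xX(\iI_j)} \leq C \|\zeta(a_j)\|_{L_x^2} + C \big(\eta^4 + \|\zeta\|_{\xX_2(\iI_j)}^4\big) \|\zeta\|_{\xX_2(\iI_j)} + C \|e\|_{L_t^1 L_x^2(\iI_j)},
\end{equation*}
for a universal constant $C$; the uniformity of the constant in $\mu \in [0,1]$ is manifest here since $\mu$ appears only multiplicatively against the nonlinearity. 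I would then fix $\eta$ so that $C\eta^4 \leq \tfrac{1}{4}$ and $\delta$ small enough that the bootstrap hypothesis $\|\zeta\|_{\xX(\iI_j)} \leq \eta$ propagates through all $N$ iterations via a standard continuity-in-$t$ argument. Under this hypothesis the quartic self-interaction is negligible, leaving
\begin{equation*}
\|\zeta\|_{\xX(\iI_j)} \leq 2C \|\zeta(a_j)\|_{L_x^2} + 2C \|e\|_{L_t^1 L_x^2(\iI_j)}.
\end{equation*}

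Since $\|\zeta(a_{j+1})\|_{L_x^2} \leq \|\zeta\|_{\xX(\iI_j)}$, iterating this estimate $N$ times yields
\begin{equation*}
\|\zeta\|_{\xX(\iI)} \leq (2C)^N \big(\|\zeta(a)\|_{L_x^2} + \|e\|_{L_t^1 L_x^2(\iI)}\big),
\end{equation*}
which is the claimed bound with constant depending on $M_1, M_2$ only through $N$ and $\eta$.

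The main obstacle is closing the bootstrap cleanly: because $w$ is only known to be close to $v$ \emph{through} the iteration, the smallness of $\|\zeta\|_{\xX(\iI_j)}$ must be reverified at each step, which forces $\delta$ to be chosen as small as roughly $\eta/(2C)^N$, i.e., depending on $N$ and hence on $M_2$. The a priori bound $\|v\|_{\xX_1(\iI)} \leq M_1$ plays only an auxiliary role, ensuring that the $\xX_1$-component of $w$ remains controlled through the iteration and that the continuity-in-$t$ extension of $\|\zeta\|_{\xX_2(a_j,\cdot)}$ is well-posed on each $\iI_j$.
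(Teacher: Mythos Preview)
Your argument is correct and is precisely the standard I-team stability proof; the paper does not give its own proof of this proposition but simply cites \cite[Lemmas~3.9 and~3.10]{colliander2008global}, where exactly this partition-by-$\xX_2$-norm plus Strichartz bootstrap appears. The same scheme is also what the paper carries out explicitly in its proof of the closely related Proposition~\ref{pr:strong_sta_m}, so your sketch matches both the cited source and the paper's own methodology.
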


The above proposition is purely perturbative since it assumes the boundedness of $\|v\|_{\xX_2(\iI)}$. More details of the proposition can be found in \cite[Lemmas~3.9 and~3.10]{colliander2008global}. On the other hand, the well-posedness of \eqref{eq:nls_w} on the interval $\iI$ is highly nontrivial, and it is proved recently by Dodson (\cite{dodson2012global, dodson2016global, dodson2467global}). We state it (in $d=1$) below. 

\begin{thm} [Dodson]
	\label{th:Dodson}
	For every $w(a) \in L_{x}^{2}$, the equation \eqref{eq:nls_w} has a global solution $w \in \xX(\RR)$. Furthermore, we have the bound
	\begin{equation*}
	\|w\|_{\xX(\RR)} \leq D_M, 
	\end{equation*}
	where $D_M$ depends on $\|w(a)\|_{L_{x}^{2}}$ only. 
\end{thm}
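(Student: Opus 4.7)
The plan is to follow the concentration-compactness plus rigidity scheme initiated by Kenig--Merle and carried out in the mass-critical setting by Killip--Tao--Visan, Tao--Visan--Zhang, and finally Dodson. The starting observation is that local well-posedness combined with small-data scattering (which follows from Proposition~\ref{pr:iteam_sta} applied with $v = 0$, $e = -\mu|w|^4 w$ and a Strichartz bootstrap) gives the claim whenever $\|w(a)\|_{L_x^2} \leq \eta_0$ for some universal $\eta_0 > 0$. The task is therefore to upgrade this to arbitrary $L^2$ initial data. Since $\mu \in [0,1]$ can be absorbed by the scaling $w \mapsto \mu^{1/4} w$, it suffices to treat $\mu = 1$; the dependence of $D_M$ on $\|w(a)\|_{L_x^2}$ then follows automatically.

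First I would reduce matters to a critical element. Define
\begin{equation*}
L(M) := \sup\bigl\{ \|w\|_{\xX(\RR)} : w \text{ solves } \eqref{eq:nls_w} \text{ with } \|w(a)\|_{L_x^2} \leq M \bigr\},
\end{equation*}
and set $M_* := \sup\{M \geq 0 : L(M) < \infty\}$. If $M_* = +\infty$ we are done, so suppose $M_* < \infty$. Pick a sequence $w_n$ of solutions with $\|w_n(0)\|_{L_x^2} \to M_*$ and $\|w_n\|_{\xX(\RR)} \to \infty$. Applying the mass-critical linear profile decomposition of Merle--Vega to $\{e^{it\Delta} w_n(0)\}$ (which accounts for the Galilean, translation, and scaling symmetries specific to $d=1$), and combining it with the stability statement of Proposition~\ref{pr:iteam_sta}, standard arguments show that only one profile can carry positive mass in the limit. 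This produces a minimal mass solution $u_*$ of \eqref{eq:nls_w} whose orbit $\{u_*(t, \cdot): t \in I_{\max}\}$ is precompact in $L_x^2$ modulo the full symmetry group.

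Next I would classify such almost-periodic solutions. The no-waste Duhamel formula plus conservation of mass forces the frequency scale $N(t)$ and Galilean parameter $\xi(t)$ of $u_*$ to fall into one of three canonical scenarios on $\RR$: a self-similar blow-up with $N(t) \sim t^{-1/2}$, a double high-to-low frequency cascade, or a soliton-like solution with $N(t) \equiv 1$ and bounded $\xi(t)$. The first two are eliminated by showing via additional regularity estimates that an almost-periodic solution of that type would actually lie in $H_x^s$ for every $s \geq 0$, whence scaling or $L^2$ conservation yields a contradiction. The soliton-like enemy, which is the hard part of the theorem, is excluded via Dodson's one-dimensional long-time Strichartz estimate combined with a frequency-localized interaction Morawetz inequality adapted to $d=1$; this is where the bulk of Dodson's papers are spent.

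The main obstacle is precisely the exclusion of the soliton-like scenario in one dimension: the classical interaction Morawetz estimate degenerates at low regularity in $d=1$, so one must frequency-localize above $N(t)$ and control the error terms using a long-time Strichartz estimate, which in turn requires sharp control on how $N(t)$ can oscillate. Once all three enemies are defeated one reaches a contradiction with $M_* < \infty$, and the bound $\|w\|_{\xX(\RR)} \leq D_M$ then follows by a standard compactness and scaling argument from the finiteness of $L(M)$ for every $M$.
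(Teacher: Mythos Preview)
The paper does not prove this theorem at all: it is stated as a result of Dodson, with the proof outsourced entirely to the cited works \cite{dodson2012global, dodson2016global, dodson2467global}. The text surrounding the statement says explicitly that the well-posedness ``is highly nontrivial, and it is proved recently by Dodson,'' and then moves on to use the bound as a black box in Proposition~\ref{pr:strong_sta} and the subsequent uniform-in-$m$ statements.

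Your outline is a faithful high-level sketch of Dodson's actual argument (the Kenig--Merle concentration-compactness/rigidity scheme, reduction to a minimal-mass almost-periodic solution, classification into self-similar, cascade, and soliton-like enemies, and elimination of the soliton scenario via the one-dimensional long-time Strichartz estimate and frequency-localized interaction Morawetz). So there is no mathematical error in what you wrote, but it goes far beyond what the paper itself does: for the purposes of this article, the correct ``proof'' is simply a citation.
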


The proof by Dodson used Proposition~\ref{pr:iteam_sta}. Actually, it shows that the solutions is not only bounded on the whole real line, but also scatters. More details of the scattering properties can be found in the above mentioned papers by Dodson. 

Combining Proposition~\ref{pr:iteam_sta} and Theorem~\ref{th:Dodson}, we have the following stronger stability property. 

\begin{prop} \label{pr:strong_sta}
	Let $w$ be the solution to \eqref{eq:nls_w}, and $v$, $e$ satisfy \eqref{eq:nls_v}. For every $M>0$, there exist $\delta_M, C_M>0$ depending on $M$ only such that if
	\begin{equation*}
	\|w(a)\|_{L_{x}^{2}} \leq M\;, \qquad \|v(a)-w(a)\|_{L_x^2} + \|e\|_{L_{t}^{1}L_{x}^{2}(\iI)} \leq \delta_M, 
	\end{equation*}
	then we have
	\begin{equation*}
	\|v-w\|_{\xX(\iI)} \leq C_M \big( \|v(a)-w(a)\|_{L_x^2} + \|e\|_{L_{t}^{1}L_{x}^{2}(\iI)} \big). 
	\end{equation*}
	The bound is uniform over intervals $\iI$ with $|\iI| \leq 1$ and coupling constant $\mu \in [0,1]$. 
\end{prop}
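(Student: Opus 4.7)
The strategy is to upgrade the purely perturbative stability of Proposition \ref{pr:iteam_sta} into one that does not need an a priori bound on $\|v\|_{\xX_2(\iI)}$, by exploiting the unconditional bound for $w$ supplied by Dodson's theorem. First, Theorem \ref{th:Dodson} gives a constant $D_M$, depending only on $M$, with $\|w\|_{\xX(\RR)} \leq D_M$ (uniformly in $\mu\in[0,1]$; the $\mu=0$ case is handled trivially by Strichartz, and for $\mu\in(0,1]$ one may simply quote the theorem as stated or scale $(v,w,e)\mapsto \mu^{1/4}(v,w,e)$ to reduce to $\mu=1$, noting that this scaling leaves both sides of the desired inequality invariant). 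Fix $\eta = \eta(M)$ to be chosen small enough so that Proposition \ref{pr:iteam_sta} applies with $M_1=2M$, $M_2=2\eta$, yielding constants $\delta_\star = \delta_\star(M)$ and $C_\star = C_\star(M)$.

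Next, using $\|w\|_{\xX_2(\iI)}\leq D_M$, partition $\iI=[a,b]$ into
\[
a = t_0 < t_1 < \cdots < t_K = b, \qquad \|w\|_{\xX_2([t_k,t_{k+1}])}^{5} \leq \eta^{5}\quad\text{for each }k,
\]
where $K \leq 1 + (D_M/\eta)^5$ depends only on $M$. On each subinterval $I_k=[t_k,t_{k+1}]$, I would run a short continuity argument in $T\in I_k$: as long as $\|v-w\|_{\xX(t_k,T)}\leq \eta$, one has $\|v\|_{\xX_1(t_k,T)}\leq 2M$ and $\|v\|_{\xX_2(t_k,T)}\leq 2\eta$, so Proposition \ref{pr:iteam_sta} with $(M_1,M_2)=(2M,2\eta)$ provides
\[
\|v-w\|_{\xX(t_k,T)} \leq C_\star \big(\|v(t_k)-w(t_k)\|_{L_x^{2}} + \|e\|_{L_t^{1}L_x^{2}([t_k,T])}\big),
\]
provided the right-hand side does not exceed $\delta_\star$. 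A standard continuity argument then shows the estimate holds throughout $I_k$ with $T=t_{k+1}$, and in particular
\[
\|v(t_{k+1})-w(t_{k+1})\|_{L_x^{2}} \leq C_\star \big(\|v(t_k)-w(t_k)\|_{L_x^{2}} + \|e\|_{L_t^{1}L_x^{2}(I_k)}\big).
\]

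Iterating this bound from $k=0$ up to $k=K-1$ gives
\[
\|v-w\|_{\xX(\iI)} \leq C_\star^{K}\big(\|v(a)-w(a)\|_{L_x^{2}} + \|e\|_{L_t^{1}L_x^{2}(\iI)}\big),
\]
so the desired constant is $C_M := C_\star^{K}$, depending only on $M$ since both $C_\star$ and $K$ do. Finally, to guarantee that at every stage the quantity fed into Proposition \ref{pr:iteam_sta} stays below $\delta_\star$, I choose $\delta_M := C_\star^{-K}\min(\delta_\star,\eta/2)$, which again depends on $M$ only.

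The main technical point is verifying closure of the continuity argument on each subinterval: one has to ensure that the bootstrap parameter $\eta$ and the smallness threshold $\delta_M$ are consistent with the constants arising from Proposition \ref{pr:iteam_sta}, and that the iteration through $K$ steps does not cause $\|v(t_k)-w(t_k)\|_{L_x^{2}}$ to exceed $\delta_\star$. This is the sole place where the constants need to be tracked carefully; everything else is a bookkeeping consequence of Dodson's unconditional bound plus the perturbative lemma.
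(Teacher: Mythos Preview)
Your proposal is correct and is exactly the intended argument: the paper does not spell out a proof of this proposition, stating only that it follows by combining Proposition~\ref{pr:iteam_sta} with Theorem~\ref{th:Dodson}, and your partition--bootstrap--iterate scheme is the standard way to carry out that combination (indeed the paper runs precisely this scheme in the closely related Proposition~\ref{pr:strong_sta_m}). Your handling of the uniformity in $\mu$ via the scaling $(v,w,e)\mapsto\mu^{1/4}(v,w,e)$ is also fine, since both sides of the stability estimate scale identically.
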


\subsection{Deterministic boundedness and stability}
\label{sec:deterministic_m}

The aim of this subsection is to prove the key deterministic bound in Proposition~\ref{pr:main_deter_sta}. This bound is written in the form that is suitable for proving uniform boundedness of $\{u_m\}$. Our proof of this bound uses some parallel statements to those in Section~\ref{sec:Dodson} but with $m<+\infty$. 

Throughout, we let $m, A, \tilde{A}>0$, and $w_m, v_m \in \xX(\iI)$ and $e \in L_{t}^{1} L_{x}^{2}(\iI)$ satisfy the equations
\begin{equation} \label{eq:nls_w_m}
i \d_t w_m + \Delta w_m = \theta_{m}\big( A + \|w_m\|_{\xX_2(a,t)}^{5} \big) |w_m|^{4} w_m\;, \quad w_m(a) \in L_{x}^{2}, 
\end{equation}
and
\begin{equation} \label{eq:nls_v_m}
i \d_t v_m + \Delta v_m = \theta_{m}\big(\tilde{A} + \|v_m\|_{\xX_2(a,t)}^{5}\big) |v_m|^4 v_m + e\;, \quad v_m(a) \in L_{x}^{2}. 
\end{equation}
Both equations hold in $\iI$. The following is an $m$-version of Theorem~\ref{th:Dodson}. 


\begin{prop} [Uniform-in-$m$ boundedness]
	\label{pr:bd_m}
	Let $w_m \in \xX(\iI)$ be the solution to \eqref{eq:nls_w_m}. Then for every $M > 0$, there exists $\tilde{D}_M > 0$ such that
	\begin{equation*}
	\|w_m\|_{\xX(\iI)} \leq \tilde{D}_M
	\end{equation*}
	whenever $\|w_m(a)\|_{L_x^2} \leq M$. The bound is uniform in $m$ and $A$. 
\end{prop}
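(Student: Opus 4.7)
My plan is to reduce the bound to Dodson's scattering theorem (Theorem~\ref{th:Dodson}) via the long-time stability in Proposition~\ref{pr:strong_sta}, exploiting the fact that the truncation factor $\theta_m(\cdot) \in [0,1]$ only weakens the nonlinearity relative to the untruncated mass-critical NLS. First I would let $w$ solve $i\partial_t w + \Delta w = |w|^4 w$ on $\iI$ with $w(a) = w_m(a)$; by Theorem~\ref{th:Dodson} and mass conservation ($\|w_m(t)\|_{L^2} = \|w_m(a)\|_{L^2} \leq M$) we have $\|w\|_{\xX(\iI)} \leq D_M$. Rewriting the $w_m$ equation as a perturbation of the full NLS gives
\[
i\partial_t w_m + \Delta w_m = |w_m|^4 w_m + e_m, \qquad e_m = \bigl(\theta_m(A + \|w_m\|_{\xX_2(a,\cdot)}^5) - 1\bigr)|w_m|^4 w_m,
\]
with $|e_m| \leq |w_m|^5$ pointwise. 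Crucially, $e_m \equiv 0$ on $[a,\tau_1]$ where $\tau_1 = \inf\{t : A + \|w_m\|_{\xX_2(a,t)}^5 > m\}$, and by uniqueness of the NLS flow $w_m \equiv w$ on $[a,\tau_1]$, so the desired bound already holds there.

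The main work is to estimate $w_m$ on $[\tau_1, b]$. I would partition this interval into finitely many subintervals $J_1,\ldots,J_K$ on each of which $\|w\|_{\xX_2(J_k)} \leq \eta$, with $\eta = \eta(M)$ small and $K$ depending only on $M$ (since $\|w\|_{\xX_2(\iI)} \leq D_M$). On each $J_k$ I would run a bootstrap with the assumption $\|w_m - w\|_{\xX(J_k)} \leq \eta$, so that $\|w_m\|_{\xX_2(J_k)} \leq 2\eta$ and hence $\|e_m\|_{L_t^1 L_x^2(J_k)} \leq \|w_m\|_{\xX_2(J_k)}^5 \leq (2\eta)^5$. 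With this smallness verified, Proposition~\ref{pr:iteam_sta} yields
\[
\|w_m - w\|_{\xX(J_k)} \leq C\bigl( \|w_m(t_{k-1}) - w(t_{k-1})\|_{L^2} + (2\eta)^5 \bigr),
\]
which (for $\eta$ small enough) improves the bootstrap assumption on $J_k$ provided the accumulated initial-data mismatch at $t_{k-1}$ is under control.

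The main obstacle is closing the bootstrap across all $K$ subintervals without the stability constant compounding into an exponential factor $(1+C)^K$. To circumvent this, rather than naively iterating the short-time lemma I would apply Proposition~\ref{pr:strong_sta} directly on larger blocks: its constants $\delta_M$ and $C_M$ depend only on $M$ and already encapsulate Dodson's iteration, so once $\|e_m\|_{L_t^1 L_x^2}$ is verified to be at most $\delta_M$ on a block (which follows from the local bootstrap giving small $\|w_m\|_{\xX_2}$), the long-time stability gives a bound independent of how the block is subdivided. The uniform-in-$\mu$ feature of Proposition~\ref{pr:strong_sta} is essential here since $\theta_m(A + \|w_m\|_{\xX_2(a,t)}^5) \in [0,1]$ is an effective coupling that fluctuates along the trajectory. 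Summing the contributions over the finite partition, together with the free-evolution bound $\|w_m\|_{\xX(\tau_2,b)} \leq CM$ on any region where $A + \|w_m\|_{\xX_2(a,t)}^5 \geq 2m$ forces $\theta_m \equiv 0$, produces the uniform estimate $\|w_m\|_{\xX(\iI)} \leq \tilde{D}_M$ with $\tilde{D}_M$ depending only on $M$.
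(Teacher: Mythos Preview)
Your argument has a genuine gap in the control of the perturbation. By comparing $w_m$ to the solution $w$ of the full NLS with $\mu=1$, your error term is
\[
e_m(t) = \bigl(\theta_m(A + \|w_m\|_{\xX_2(a,t)}^5) - 1\bigr)|w_m(t)|^4 w_m(t),
\]
and the only pointwise bound available is $|e_m| \leq |w_m|^5$, with no intrinsic smallness. Under your bootstrap hypothesis $\|w_m\|_{\xX_2(J_k)} \leq 2\eta$ this yields $\|e_m\|_{L_t^1 L_x^2(J_k)} \leq (2\eta)^5$ on each of $K \sim D_M^5/\eta^5$ subintervals, so the total error over $[\tau_1,b]$ is of order $32 D_M^5$ --- a fixed constant, not small. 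Hence Proposition~\ref{pr:strong_sta} cannot be invoked ``on larger blocks'' as you suggest, since its hypothesis $\|e\|_{L_t^1 L_x^2} \leq \delta_M$ is not met. Iterating the short-time stability instead produces an accumulated mismatch of order $(2\eta)^5 C^K$ with $K \sim \eta^{-5}$, which diverges as $\eta \to 0$; no choice of $\eta$ closes the loop. Your appeal to the uniform-in-$\mu$ clause of Proposition~\ref{pr:strong_sta} is a misreading: that statement concerns a \emph{constant} coupling $\mu$, not a time-varying one. Finally, the claim $w_m \equiv w$ on $[a,\tau_1]$ already presupposes $A < m$; when $A \in [m,2m)$ one has $\tau_1 = a$ and the initial agreement region is empty.

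The paper's fix is to compare $w_m$ not to the $\mu=1$ flow but to the NLS with coupling $\mu = \theta_m(A)$. The error then becomes
\[
e(t) = \bigl(\theta_m(A + \|w_m\|_{\xX_2(a,t)}^5) - \theta_m(A)\bigr)|w_m(t)|^4 w_m(t),
\]
and the Lipschitz bound $|\theta_m(A+x)-\theta_m(A)| \leq C_0 x/m$ gives $\|e\|_{L_t^1 L_x^2(a,r)} \leq \frac{C_0}{m}\|w_m\|_{\xX_2(a,r)}^{10}$. The factor $1/m$ is the missing smallness: for $m$ larger than a threshold depending only on $M$, a single application of Proposition~\ref{pr:strong_sta} on all of $\iI$ closes the bootstrap $\|w_m\|_{\xX(a,r)} \leq 2(D_M + C_M \delta_M)$ directly, with no subdivision and no compounding. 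The remaining small-$m$ regime is handled separately, since the truncation forces $w_m$ to become a free solution once $A+\|w_m\|_{\xX_2(a,t)}^5$ reaches $2m$.
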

\begin{proof}
	Fix $M>0$ arbitrary. Note that $w_m$ satisfies a free equation once its $\xX_2$-norm reaches $2m$, so it is defined on all of $[a,b]$. We only need to show that it is bounded by a constant depending on $M$ only. 
	
	If $m$ is smaller than some (possibly large) constant depending on $M$, then $w$ will satisfy the free equation after its $\xX_2$-norm reaches twice that constant. Hence, $\|w\|_{\xX_2(\iI)}$ will be bounded by a constant depending on $M$ only, giving the desired control. Thus, it suffices to consider the case when $m$ is large. 
	
	We first re-write the equation for $w_m$ as
	\begin{equation*}
	i \d_t w_m + \Delta w_m = \theta_m(A) |w_m|^4 w_m + e, 
	\end{equation*}
	where
	\begin{equation*}
	e(t) = \big( \theta_m(A + \|w_m\|_{\xX_2(a,t)}^{5}) - \theta_m(A) \big) |w_m(t)|^{4} w_m(t). 
	\end{equation*}
	Let $w$ denote the solution to \eqref{eq:nls_w} with $\mu = \theta_m(A)$ and $w(a) = w_{m}(a)$. If $\|e\|_{L_{t}^{1}L_{x}^{2}(a,b)}$ is small, then we can use Proposition~\ref{pr:strong_sta} to control the difference between $w_m$ and $w$, and the desired bound will follow. But $e$ itself depends on $w_m$, so we need to use bootstrap arguments to get the desired control. 
	
	For every $a \leq t \leq r \leq b$, we have the pointwise bound
	\begin{equation*}
	|e(t)| \leq \frac{C_0}{m} \|w_m\|_{\xX_2(a,t)}^{5} |w_m(t)|^{5} \leq \frac{C_0}{m} \|w_m\|_{\xX_2(a,r)}^{5} |w_m(t)|^{5}. 
	\end{equation*}
	By H\"older's inequality, we get
	\begin{equation} \label{eq:bd_m_error}
	\|e\|_{L_{t}^{1}L_{x}^{2}(a,r)} \leq \frac{C_0}{m} \|w_m\|_{\xX_2(a,r)}^{10}. 
	\end{equation}
	Let $D_M$, $\delta_M$ and $C_M$ be the constants as in Theorem~\ref{th:Dodson} and Proposition~\ref{pr:strong_sta}. Let $m$ be sufficiently large so that
	\begin{equation} \label{eq:choice_m}
	\frac{C_0}{m} \Big( 2(D_M + C_M \delta_M) \Big)^{10} \leq \delta_M, 
	\end{equation}
	where $C_0$ is the same constant as in \eqref{eq:bd_m_error}. This choice of $m$ depends on $M$ only. We claim that $\|w_m\|_{\xX(a,r)}$ will never exceed $2(D_M + C_M \delta_M)$ on $[a,b]$. To see this, we first note that $\|w_m\|_{\xX(a,a)} = \|w_m(a)\|_{L_{x}^{2}} \leq M \leq D_M$. Let
	\begin{equation*}
	\tau := \inf \big\{ r>a: \|w_m\|_{\xX(a,r)} = 2(D_M+C_M\delta_M) \big\}. 
	\end{equation*}
	Then, the choice of $m$ in \eqref{eq:choice_m} guarantees that
	\begin{equation*}
	\|e\|_{L_{t}^{1}L_{x}^{2}(a,\tau)} \leq \delta_M. 
	\end{equation*}
	By Theorem~\ref{th:Dodson} and Proposition~\ref{pr:strong_sta}, we have
	\begin{equation*}
	\|w_m\|_{\xX(a,\tau)} \leq \|w\|_{\xX(a,\tau)} + C_M \delta_M \leq D_M + C_M \delta_M. 
	\end{equation*}
	This shows that $\|w\|_{\xX_2(a,r)} \leq 2(D_M + C_M \delta_M)$ for all $r \in [a,b]$, and in particular it holds for $r=b$. This completes the proof. 
\end{proof}

With the uniform boundedness, we have the following stability statement. 

\begin{prop} [Uniform-in-$m$ stability]
	\label{pr:strong_sta_m}
	Let $w_m, v_m \in \xX(\iI)$ and $e \in L_{t}^{1}L_{x}^{2}(\iI)$ satisfy \eqref{eq:nls_w_m} and \eqref{eq:nls_v_m}. Then for every $M>0$, there exist constants $\tilde{\delta}_M, \tilde{C}_M>0$ such that if
	\begin{equation*}
	\|v_m(a)\|_{L_{x}^{2}} \leq M\;, \quad \|v_m(a)-w_m(a)\|_{L_x^2} + \|e\|_{L_{t}^{1}L_{x}^{2}(\iI)} + |A-\tilde{A}| \leq \tilde{\delta}_M, 
	\end{equation*}
	then we have
	\begin{equation*}
	\|v_m - w_m\|_{\xX(\iI)} \leq \tilde{C}_M \big( \|v_m(a)-w_m(a)\|_{L_x^2} + \|e\|_{L_{t}^{1}L_{x}^{2}(\iI)} + |A-\tilde{A}| \big). 
	\end{equation*}
	The constants $\tilde{\delta}_M$ and $\tilde{C}_M$ are independent of $m$, $A$ and $\tilde{A}$. 
\end{prop}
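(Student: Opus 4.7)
The plan is to mimic the proof of Proposition~\ref{pr:bd_m} through a bootstrap on short subintervals, treating $v_m$ as a perturbation of $w_m$. Proposition~\ref{pr:bd_m} gives $\|w_m\|_{\xX(\iI)} \leq \tilde{D}_M$ immediately, and under the smallness hypothesis a concurrent bootstrap will also yield a uniform bound $\|v_m\|_{\xX(\iI)} \leq 2 \tilde{D}_M$, with constant depending on $M$ only. Partition $\iI$ into subintervals $\iI_k = [t_k, t_{k+1}]$, $k = 0, \dots, K-1$, on each of which $\|w_m\|_{\xX_2(\iI_k)} \leq \eta$ for small $\eta = \eta(M)$; the number $K$ then depends on $M$ only. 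On each $\iI_k$, the Duhamel formula for $r := v_m - w_m$ combined with Strichartz gives
\begin{equation*}
\|r\|_{\xX(\iI_k)} \leq C \|r(t_k)\|_{L_x^{2}} + C \|N_v - N_w\|_{L_t^{1} L_x^{2}(\iI_k)} + C \|e\|_{L_t^{1} L_x^{2}(\iI_k)},
\end{equation*}
where $N_w(t) = \mu_w(t) |w_m|^{4} w_m$ with $\mu_w(t) := \theta_m(A + \|w_m\|_{\xX_2(a,t)}^{5})$, and analogously for $N_v, \mu_v$.

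Writing $N_v - N_w = \mu_v \bigl(|v_m|^{4} v_m - |w_m|^{4} w_m\bigr) + (\mu_v - \mu_w) |w_m|^{4} w_m$, the first piece is standard: H\"older combined with $|\mu_v| \leq 1$ and the smallness of $\|v_m\|_{\xX_2(\iI_k)}, \|w_m\|_{\xX_2(\iI_k)}$ yields a bound $C \eta^{4} \|r\|_{\xX_2(\iI_k)}$, which is absorbable into the left-hand side after choosing $\eta$ small. The real difficulty is the second piece, since the naive Lipschitz bound $|\theta_m(x) - \theta_m(y)| \leq \|\theta'\|_{\infty} |x - y|/m$ carries a factor $1/m$ that is problematic for small $m$. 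To handle this uniformly in $m$, I would further split $\mu_v - \mu_w = (\mu_v - \mu_v^{*}) + (\mu_v^{*} - \mu_w)$ with the intermediate quantity $\mu_v^{*}(t) := \theta_m(\tilde{A} + \|w_m\|_{\xX_2(a,t)}^{5})$. The key observation is that on the support of $\mu_v^{*} - \mu_w$, at least one of the two $\theta_m$ values is nonzero, which forces $\|w_m\|_{\xX_2(a,t)}^{5} \leq 2m$. Hence
\begin{equation*}
\int_{\iI_k \cap \mathrm{supp}(\mu_v^{*} - \mu_w)} \|w_m(t)\|_{L_x^{10}}^{5}\, {\rm d} t \leq 2m,
\end{equation*}
which precisely cancels the $1/m$ in $|\mu_v^{*} - \mu_w| \leq \|\theta'\|_{\infty} |A - \tilde{A}|/m$ and produces the uniform-in-$m$ estimate $\|(\mu_v^{*} - \mu_w) |w_m|^{4} w_m\|_{L_t^{1} L_x^{2}(\iI_k)} \leq C |A - \tilde{A}|$. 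The remaining term $(\mu_v - \mu_v^{*}) |w_m|^{4} w_m$ is handled by an analogous support argument together with the pointwise bound $\bigl|\|v_m\|_{\xX_2(a,t)}^{5} - \|w_m\|_{\xX_2(a,t)}^{5}\bigr| \leq C \tilde{D}_M^{4} \|r\|_{\xX_2(a,t)}$, producing a bound of the form $C_M \|r\|_{\xX_2(a, t_{k+1})}$ that couples the local estimate to the cumulative error.

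Assembling these estimates and absorbing the appropriate small terms gives a recursion of the form $\|r\|_{\xX(a, t_{k+1})} \leq C_M \bigl( \|r\|_{\xX(a, t_k)} + |A - \tilde{A}| + \|e\|_{L_t^{1} L_x^{2}(\iI_k)} \bigr)$, and iterating $K$ times yields the claimed bound with $\tilde{C}_M = C_M^{K}$, which depends on $M$ only. The constant $\tilde{\delta}_M$ is fixed small enough to close the concurrent bootstrap for $v_m$ and to validate all absorption steps. The main obstacle, as emphasized above, is the uniform-in-$m$ cancellation of the $1/m$ Lipschitz factor against the support constraint $\|w_m\|_{\xX_2(a,\cdot)}^{5} \leq 2m$; this mechanism is what distinguishes the argument from a direct application of the standard stability Proposition~\ref{pr:strong_sta}.
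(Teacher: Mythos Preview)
Your overall strategy—partitioning $\iI$ according to $\|w_m\|_{\xX_2}$ and iterating a perturbative Strichartz estimate—is exactly the paper's approach. However, your identification of the ``main obstacle'' is off, and your support argument creates a gap in closing the recursion.

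The $1/m$ factor from the Lipschitz bound on $\theta_m$ is not the real difficulty. In the application $m$ is sent to $+\infty$, so one may simply bound $1/m \leq 1$; the paper's pointwise estimate on the nonlinear difference does precisely this (the constant $C$ there absorbs $1/m$), and no support argument is invoked anywhere. So the mechanism you call ``what distinguishes the argument'' is absent from the paper's proof.

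More importantly, your support argument for $(\mu_v - \mu_v^{*})|w_m|^{4}w_m$ produces a bound $C_M \|r\|_{\xX_2(a,t_{k+1})}$ with $C_M \sim \tilde{D}_M^{4}$, which is \emph{not small}. You then assert the recursion $\|r\|_{\xX(a,t_{k+1})} \leq C_M(\|r\|_{\xX(a,t_k)} + \cdots)$, but this does not follow: the term $C_M\|r\|_{\xX_2(a,t_{k+1})}$ on the right carries the same endpoint $t_{k+1}$ as the left-hand side, and since its coefficient is not less than $1$ it cannot be absorbed. Your ``concurrent bootstrap'' $\|v_m\|_{\xX(\iI)} \leq 2\tilde{D}_M$ does not help here, as that is a global bound, not local smallness on $\iI_k$.

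The paper closes this differently. It uses the partition bound $\|w_m\|_{\xX_2(\iI_{k+1})}^{5} \leq \eta$ (rather than any support bound $\leq 2m$) so that the \emph{linear} coefficient in front of $\|r\|_{\xX_2(a,r)}$ is of order $\eta\,\tilde{D}_M^{4}$ and hence absorbable after choosing $\eta = \eta(M)$ small. What survives is a \emph{superlinear} term $C_0\|r\|_{\xX(a,r)}^{5}$ in the resulting inequality, and this is handled not by absorption but by a continuity argument: if $\delta_k := \|r\|_{\xX(a,\tau_k)} + \|e\|_{L_t^1 L_x^2} + |A-\tilde{A}|$ is below a universal threshold $\delta^*$, then the inequality $x \leq C_0(\delta_k + x^{5})$ together with $x|_{r=\tau_k}$ small forces $x \leq 2C_0\delta_k$. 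Iterating this $K$ times gives $\tilde{C}_M = (2C_0+1)^{K}$. Your proposal lacks both the small linear coefficient and this $\|r\|^{5}$ continuity step.
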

\begin{proof}
	Let $\tilde{D}_M$ be the constant in Proposition~\ref{pr:bd_m}. Let $\eta>0$ be a small number whose value will be specified later. Let $\{\tau_k\}_{k=0}^{K}$ be a dissection of the interval $[a,b]$ given by $\tau_0 = a$, and
	\begin{equation*}
	\tau_{k+1} = b \wedge \inf\Big\{ r>\tau_k: \|w_m\|_{\xX_2(\tau_k,r)}^{5} = \eta \Big\}. 
	\end{equation*}
	The total number of intervals in this dissection is then at most
	\begin{equation} \label{eq:sta_m_interval}
	K \leq 1 + \frac{\tilde{D}_M^5}{\eta}. 
	\end{equation}
	Let $\iI_{k+1} = [\tau_k, \tau_{k+1}]$. For every $k$ and every $t \in \iI_{k+1}$, we have
	\begin{equation} \label{eq:sta_m_express}
	v_{m}(t) - w_{m}(t) = e^{i(t-\tau_k)\Delta} \big( v_{m}(\tau_k) - w_{m}(\tau_k) \big) - \int_{\tau_k}^{t} \sS(t-s) e(s) {\rm d}s + \dD_{\tau_k}(t), 
	\end{equation}
	where
	\begin{equation*}
	\begin{split}
	\dD_{\tau_k}(t) = -i \int_{\tau_k}^{t} &\sS(t-s) \Big( \theta_{m}\big(\tilde{A}+\|v_m\|_{\xX_2(a,s)}^{5}\big) \nN\big(v_m(s)\big)\\
	&- \theta_{m}\big(A+\|w_m\|_{\xX_2(a,s)}^{5}\big) \nN\big(w_m(s)\big)\Big) {\rm d}s. 
	\end{split}
	\end{equation*}
	For the first two terms on the right hand side of \eqref{eq:sta_m_express}, we have
	\begin{equation} \label{eq:sta_m_easy}
	\begin{split}
	\Big\| e^{i(t-\tau_k)\Delta}\big(v_m(\tau_k) - w_m(\tau_k)\big) \Big\|_{\xX(\iI_{k+1})} &\leq C \|v_m - w_m\|_{\xX(a,\tau_k)}, \\
	\Big\| \int_{\tau_k}^{t} \sS(t-s) e(s) {\rm d}s \Big\|_{\xX(\iI_{k+1})} &\leq C \|e\|_{L_{t}^{1}L_{x}^{2}(a,b)}, 
	\end{split}
	\end{equation}
	where both constants $C$ are universal. Now, for every $r \in \iI_{k+1}$, taking $\xX(\tau_k,r)$-norm on both sides of \eqref{eq:sta_m_express}, using the bounds \eqref{eq:sta_m_easy}, and then adding $\|v_m - w_m\|_{\xX(a,\tau_k)}$ to both sides, we get
	\begin{equation} \label{eq:sta_m_intermediate}
	\|v_m - w_m\|_{\xX(a,r)} \leq C \big( \|v_m - w_m\|_{\xX(a,\tau_k)} + \|e\|_{L_{t}^{1}L_{x}^{1}(a,b)} \big) + \|\dD_{\tau_k}\|_{\xX(\tau_k,r)}. 
	\end{equation}
	Here, the constant $C$ is universal, and in particular does not depend on $r \in \iI_{k+1}$. 
	
	It remains to control $\|\dD_{\tau_k}\|_{\xX(\tau_k,r)}$. The integrand in $\dD_{\tau_k}$ satisfies the pointwise bound
	\begin{equation*}
	\begin{split}
	&\phantom{111}\Big| \theta_{m}\big(\tilde{A}+\|v_m\|_{\xX_2(a,s)}^{5}\big) \nN\big(v_m(s)\big) - \theta_{m}\big(A+\|w_m\|_{\xX_2(a,s)}^{5}\big) \nN\big(w_m(s)\big)\Big|\\
	&\leq C |w_m(s)|^{5} \Big( |\tilde{A}-A| + \tilde{D}_{M}^{4} \|v_m - w_m\|_{\xX_2(a,r)} + \|v_m-w_m\|_{\xX_2(a,r)}^{5} \Big)\\
	&+ C |v_m(s)-w_m(s)| \big( |w_m(s)|^{4} + |v_{m}(s) - w_{m}(s)|^{4} \big)
	\end{split}
	\end{equation*}
	for every $\tau_k \leq s \leq r \leq \tau_{k+1}$. Hence, by Strichartz estimates \eqref{eq:pre_nonlinear_critical}, we have
	\begin{equation*} 
	\begin{split}
	\|\dD_{\tau_k}\|_{\xX(\tau_k,r)} &\leq C \|w_m\|_{\xX_2(\tau_k,r)}^{5} \Big( |A-\tilde{A}| + \tilde{D}_M^{4} \|v_m-w_m\|_{\xX_2(a,r)} + \|v_m-w_m\|_{\xX_2(a,r)}^{5} \Big)\\
	&+ C \|v-w\|_{\xX_2(\tau_k,r)} \Big( \|w_m\|_{\xX_2(\tau_k,r)}^{4} + \|v_m -w_m\|_{\xX_2(\tau_k,r)}^{4} \Big)\\
	\end{split}
	\end{equation*}
	Since $\|w_m\|_{\xX_2(\tau_k,r)}^{5} \leq \eta$ for $r \in \iI_{k+1}$, we get
	\begin{equation} \label{eq:sta_m_nonlinear}
	\|\dD_{\tau_k}\|_{\xX(\tau_k,r)} \leq C \eta \Big( |\tilde{A}-A| + (\tilde{D}_{M}^4 \eta^5 + \eta^4) \|v_m-w_m\|_{\xX_2(a,r)} + \|v_m - w_m\|_{\xX_2(a,r)}^{5} \Big). 
	\end{equation}
	Substituting the bound \eqref{eq:sta_m_nonlinear} into \eqref{eq:sta_m_intermediate} and choosing $\eta$ sufficiently small so that
	\begin{equation} \label{eq:sta_m_eta}
	\eta < 1\; \quad \text{and} \quad C (\tilde{D}_{M}^{4}\eta^5 + \eta^4) \leq \frac{1}{2}, 
	\end{equation}
	we can absorb the term $\|v_m-w_m\|_{\xX_2(a,r)}$ into the right hand side of \eqref{eq:sta_m_intermediate}, and obtain
	\begin{equation} \label{eq:sta_m_bootstrap}
	\|v_m-w_m\|_{\xX(a,r)} \leq C_{0} \Big( \|v_m - w_m\|_{\xX(a,\tau_k)} + \|e\|_{L_{t}^{1}L_{x}^{2}(a,b)} + |\tilde{A}-A| + \|v_m-w_m\|_{\xX(a,r)}^{5} \Big). 
	\end{equation}
	This bound holds for all $r \in \iI_{k+1}$ with a universal $C_0$ (not depending on $M$). Also note that the choice of $\eta$ depends on $\tilde{D}_M$ (and hence $M$) only. 
	
	Now, for every $k = 0, \dots, K$, let
	\begin{equation*}
	\delta_{k} = \|v_m-w_m\|_{\xX(a,\tau_k)} + \|e\|_{L_{t}^{1}L_{x}^{2}(\iI)} + |\tilde{A} - A|. 
	\end{equation*}
	According to \eqref{eq:sta_m_bootstrap} and the standard continuity argument, there exists a universal $\delta^* > 0$ such that if $\delta_k < \delta^*$, then
	\begin{equation*}
	\|v_m-w_m\|_{\xX(a,\tau_{k+1})} \leq 2 C_0 \delta_k, 
	\end{equation*}
	and consequently
	\begin{equation} \label{eq:iterate_diff}
	\delta_{k+1} \leq (2C_0 + 1) \delta_k. 
	\end{equation}
	By \eqref{eq:sta_m_interval}, we see if the initial difference $\delta_0$ is small enough such that
	\begin{equation*}
	\delta_0 < \frac{\delta^*}{(2C_0+1)^{1 + \tilde{D}_{M}^5/\eta}}, 
	\end{equation*}
	then we can iterate \eqref{eq:iterate_diff} up to $K$ so that
	\begin{equation*}
	\delta_K \leq (2C_0+1)^{K} \delta_0. 
	\end{equation*}
	This completes the proof of the proposition by the definition of $\delta_0$ and by taking
	\begin{equation*}
	\tilde{\delta}_M = \frac{\delta^*}{(2C_0+1)^{1 + \tilde{D}_{M}^5/\eta}}\;, \qquad \tilde{C}_M = (2C_0+1)^{1+\tilde{D}_{M}^5/\eta}, 
	\end{equation*}
	where $C_0$ is the same as in \eqref{eq:sta_m_bootstrap}, and $\eta = \eta_M$ is chosen according to \eqref{eq:sta_m_eta}. 
\end{proof}

We are now ready to prove our main deterministic bound. 

\begin{prop} \label{pr:main_deter_sta}
	Recall $\nN(u) = |u|^4 u$. Suppose $u_m, g_m \in \xX(\iI)$ satisfy $g_m(a)=0$, and
	\begin{equation*}
	u_m(t) = e^{i(t-a)\Delta}u_m(a) - i \int_{a}^{t} \sS(t-s) \Big( \theta_m\big( A + \|u\|_{\xX_2(a,s)}^{5}\big) \nN\big(u_m(s)\big) \Big) {\rm d}s + g_m(t)
	\end{equation*}
	on $\iI=[a,b]$. Then for every $M>0$, there exist $\eta_M, B_M>0$ such that whenever
	\begin{equation*}
	\|u_m\|_{\xX_1(\iI)} \leq M\;, \qquad \|g_m\|_{\xX_2(\iI)} \leq \eta_M, 
	\end{equation*}
	we have
	\begin{equation*}
	\|u_m\|_{\xX_2(\iI)} \leq B_M. 
	\end{equation*}
	The bound is uniform in $m$ and $A$. 
\end{prop}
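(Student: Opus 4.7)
The plan is to compare $u_m$ with $w_m$, the unperturbed truncated critical solution to \eqref{eq:nls_w_m} with initial datum $w_m(a)=u_m(a)$ and the same parameter $A$. Since $\|u_m(a)\|_{L_x^2}\leq\|u_m\|_{\xX_1(\iI)}\leq M$, Proposition~\ref{pr:bd_m} immediately gives the uniform-in-$m$ bound $\|w_m\|_{\xX(\iI)}\leq\tilde{D}_M$. The target bound $\|u_m\|_{\xX_2(\iI)}\leq B_M$ will then follow by the triangle inequality once I establish $\|u_m-w_m\|_{\xX_2(\iI)}\leq 1$, say, and that in turn will be extracted from the smallness of $\|g_m\|_{\xX_2(\iI)}$ via a bootstrap on a finite dissection of $\iI$, very much in the spirit of the proof of Proposition~\ref{pr:strong_sta_m}.

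Concretely, I would partition $\iI$ as $a=\tau_0<\cdots<\tau_K=b$ with $\|w_m\|_{\xX_2(\tau_k,\tau_{k+1})}^{5}\leq\eta$ for a small parameter $\eta=\eta_M$ to be specified in terms of $M$ only; this limits the number of subintervals to $K\leq 1+\tilde{D}_M^{5}/\eta$. Since the linear parts of the Duhamel formulas for $u_m$ and $w_m$ cancel, we have
\begin{equation*}
(u_m-w_m)(t)=-i\int_{a}^{t}\sS(t-s)\bigl[\theta_m(A+\|u_m\|_{\xX_2(a,s)}^{5})\nN(u_m)(s)-\theta_m(A+\|w_m\|_{\xX_2(a,s)}^{5})\nN(w_m)(s)\bigr]{\rm d}s+g_m(t).
\end{equation*}
Taking the $\xX_2(a,\tau_{k+1})$-norm and applying the Strichartz bound $\|\int_a^t\sS(t-s)F\,{\rm d}s\|_{\xX_2(a,\tau_{k+1})}\leq C\|F\|_{L_t^1L_x^2(a,\tau_{k+1})}$ together with the pointwise estimate
\begin{equation*}
|F(s)|\leq C|u_m-w_m|(s)\bigl(|u_m(s)|^{4}+|w_m(s)|^{4}\bigr)+\bigl|\theta_m(A+\|u_m\|_{\xX_2(a,s)}^{5})-\theta_m(A+\|w_m\|_{\xX_2(a,s)}^{5})\bigr||w_m(s)|^{5},
\end{equation*}
I would split the resulting $L_t^1L_x^2$-norm over $(a,\tau_k)$ (controlled by the inductive hypothesis) and over $\iI_{k+1}:=[\tau_k,\tau_{k+1}]$ (small since $\|w_m\|_{\xX_2(\iI_{k+1})}^{5}\leq\eta$), then close a continuity bootstrap to obtain
\begin{equation*}
\|u_m-w_m\|_{\xX_2(a,\tau_{k+1})}\leq C_M\bigl(\|u_m-w_m\|_{\xX_2(a,\tau_k)}+\|g_m\|_{\xX_2(\iI)}\bigr).
\end{equation*}
Iterating $K$ times yields $\|u_m-w_m\|_{\xX_2(\iI)}\leq C_M^{K}\|g_m\|_{\xX_2(\iI)}\leq C_M^{K}\eta_M$, and choosing $\eta_M$ small enough that $C_M^{K}\eta_M\leq 1$ closes the argument with $B_M:=\tilde{D}_M+1$.

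The main obstacle is handling the nonlinearity uniformly in $m$ and $A$. First, since $\|u_m\|_{\xX_2(a,\tau_{k+1})}^{4}$ appears on the right-hand side of the comparison estimate, I must simultaneously bootstrap $\|u_m\|_{\xX_2}$ alongside $\|u_m-w_m\|_{\xX_2}$; this is what dictates the smallness of both $\eta$ (so each inductive step closes) and $\eta_M$ (so the iterated bound remains below $1$). Second, because the hypotheses control only $\|g_m\|_{\xX_2(\iI)}$ and \emph{not} $\|g_m\|_{\xX_1(\iI)}$, I cannot split the Duhamel integral as $\int_{a}^{\tau_k}+\int_{\tau_k}^{t}$ on each subinterval (which would introduce the uncontrolled pointwise quantity $\|g_m(\tau_k)\|_{L_x^2}$); the integral must be kept in its global form $\int_{a}^{t}$ and the subinterval smallness has to be extracted at the level of the integrand. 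Finally, to bound the $\theta_m$-difference term uniformly in $m$, a case analysis on whether the arguments $A+\|u_m\|_{\xX_2(a,s)}^{5}$ and $A+\|w_m\|_{\xX_2(a,s)}^{5}$ have crossed the truncation threshold $2m$---mimicking the strategy of the proof of Proposition~\ref{pr:bd_m}---will be needed so as to avoid a spurious $1/m$ factor.
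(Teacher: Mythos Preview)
Your approach is correct in spirit and would work with care, but it takes a genuinely different and more laborious route than the paper. The paper's key simplification is to set $v_m := u_m - g_m$ \emph{first}. Since $g_m(a)=0$, $v_m$ has the same initial datum and satisfies
\[
i\partial_t v_m + \Delta v_m = \theta_m\bigl(A+\|v_m\|_{\xX_2(a,t)}^5\bigr)\nN(v_m) + e,
\]
where the error $e$ collects the discrepancy between $\theta_m(A+\|v_m+g_m\|^5)\nN(v_m+g_m)$ and $\theta_m(A+\|v_m\|^5)\nN(v_m)$. This puts $v_m$ \emph{exactly} in the framework of Proposition~\ref{pr:strong_sta_m} (with $\tilde A=A$), so that proposition can be invoked as a black box; one only needs a single continuity bootstrap on $\|e\|_{L_t^1L_x^2(a,r)}$, which is bounded by $C\eta$ times a polynomial in $\|v_m\|_{\xX_2(a,r)}$. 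This completely sidesteps the obstacle you identified about the lack of $\xX_1$ control on $g_m$: after the substitution, $g_m$ appears only inside the integrand of $e$, never as a free term that would need pointwise-in-time $L_x^2$ bounds.

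By contrast, you compare $u_m$ with $w_m$ directly and thereby essentially re-run the dissection argument of Proposition~\ref{pr:strong_sta_m} inside the present proof, with the added complication of carrying the non-splittable $g_m(t)$ through each subinterval. Your workaround (keep the global Duhamel integral and split only the $L_t^1L_x^2$-norm of the integrand) is valid, but note that the $(a,\tau_k)$ portion then contributes $C(\tilde D_M+1)^4\,\|u_m-w_m\|_{\xX_2(a,\tau_k)}$ with a large $M$-dependent prefactor, so your recursion constant $C_M$ must absorb this; it does, since $K$ depends only on $M$. One small point: your proposed case analysis for the $\theta_m$-difference is unnecessary. The Lipschitz bound $|\theta_m(x)-\theta_m(y)|\le \|\theta'\|_\infty |x-y|/m \le \|\theta'\|_\infty |x-y|$ for $m\ge 1$ is already uniform in $m$; the $1/m$ is a gain, not a loss, and no threshold-crossing analysis is needed here (unlike in Proposition~\ref{pr:bd_m}, where the $1/m$ is essential to make the error small for large $m$).
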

\begin{proof}
	Let $v_m = u_m - g_m$. It suffices to bound $\|v_m\|_{\xX_2(\iI)}$. Since $g_m(a)=0$, we have
	\begin{equation*}
	\begin{split}
	v_m(t) = &e^{i(t-a)\Delta} v_m(a) - i \int_{a}^{t} \sS(t-s) \Big( \theta_{m}\big(A+ \|v_m\|_{\xX_2(a,s)}^{5}\big) \nN\big(v_m(s) \big) \Big) {\rm d}s\\
	&- i \int_{a}^{t} \sS(t-s) e(s) {\rm d}s, 
	\end{split}
	\end{equation*}
	where
	\begin{equation*}
	e(s) = \theta_{m} \big( A+\|v_m + g_m\|_{\xX_2(a,s)}^{5}\big) \nN\big(v_m(s) + g_m(s)\big) - \theta_{m} \big( A + \|v_m\|_{\xX_2(a,s)}^{5} \big) \nN\big(v_m(s)\big). 
	\end{equation*}
	We can write the above identity for $v_m$ in the differential form as
	\begin{equation*}
	i \d_t v_m + \Delta v_m = \theta_{m}\big(A+\|v_m\|_{\xX_2(a,t)}^{5}\big) \nN(v_m) + e. 
	\end{equation*}
	In view of Propositions~\ref{pr:bd_m} and~\ref{pr:strong_sta_m}, it suffices to control $\|e\|_{L_{t}^{1}L_{x}^{2}(\iI)}$. But the quantity itself depends on $v_m$, so we use the bootstrap argument similar to that in Proposition~\ref{pr:bd_m}. 
	
	Let $\eta>0$ be specified later. If $\|g_m\|_{\xX_2(a,b)} \leq \eta$, then we have the pointwise bound
	\begin{equation*}
	|e(s)| \leq C \Big(|g_m(s)| \big( |v_m(s)|^{4} + |g_m(s)|^{4} \big) + \eta |v_m(s)|^{5}  \big( \|v_m\|_{\xX_2(a,r)}^{4} + \eta^4 \big) \Big)
	\end{equation*}
	for every $a \leq s \leq r \leq b$, and $C>0$ is universal. Hence, using H\"older's inequality and again $\|g_m\|_{\xX_2(a,b)} \leq \eta$, we get
	\begin{equation} \label{eq:strong_sta_m_error}
	\|e\|_{L_{t}^{1}L_{x}^{2}(a,r)} \leq C_0 \eta \Big( \eta^{4} + \|v_m\|_{\xX_2(a,r)}^{4} + \eta^4 \|v_m\|_{\xX_2(a,r)}^{5} + \|v_m\|_{\xX_2(a,r)}^{9} \Big)
	\end{equation}
	for every $r \in [a,b]$. Let $\tilde{D}_M$, $\tilde{\delta}_M$ and $\tilde{C}_M$ be the same as in Propositions~\ref{pr:bd_m} and~\ref{pr:strong_sta_m}. Let $\eta = \eta_M$ be sufficiently small so that
	\begin{equation} \label{eq:strong_sta_m_eta}
	C_0 \eta \Big( \eta^4 + (2 \tilde{D}_M)^{4} + \eta^4 (2 \tilde{D}_M)^{5} + (2 \tilde{D}_M)^{9} \Big) \leq \tilde{\delta}_M, 
	\end{equation}
	where $C_0$ is the same as in \eqref{eq:strong_sta_m_error}. We want to show $\|v_m\|_{\xX_2(a,b)} \leq 2(\tilde{D}_M + \tilde{C}_M \tilde{\delta}_M)$ if $\|g_m\|_{\xX_2(\iI)} \leq \eta_M$. Note that $\|v_m\|_{\xX_2(a,a)}=0$. Let
	\begin{equation*}
	\tau = \inf\big\{ r>a: \|v_m\|_{\xX_2(a,r)} = 2(\tilde{D}_M + \tilde{C}_M \tilde{D}_M)\big\}. 
	\end{equation*}
	Hence, by Propositions~\ref{pr:bd_m} and~\ref{pr:strong_sta_m}, the bound \eqref{eq:strong_sta_m_error}, and the choice of $\eta$ in \eqref{eq:strong_sta_m_eta}, we have
	\begin{equation*}
	\|v_m\|_{\xX_2(a,\tau)} \leq \tilde{D}_M + \tilde{C}_M \|e\|_{L_{t}^{1}L_{x}^{2}(a,\tau)} \leq \tilde{D}_M + \tilde{C}_M \tilde{\delta}_M. 
	\end{equation*}
	This shows that $\|v_m\|_{\xX_2(a,r)}$ will never exceed $2(\tilde{D}_M + \tilde{C}_M \tilde{\delta}_M)$ in the interval $[a,b]$. Hence, 
	\begin{equation*}
	\|u_m\|_{\xX_2(\iI)} \leq \|v_m\|_{\xX_2(\iI)} + \|g_m\|_{\xX_2(\iI)} \leq 2 \big(\tilde{D}_M + \tilde{C}_M \tilde{\delta}_M\big) + \eta_M =: B_M, 
	\end{equation*}
	where $\eta_M$ is as chosen in \eqref{eq:strong_sta_m_eta}. The proof is thus complete. 
\end{proof}

\subsection{Proof of Proposition~\ref{pr:uniform_m}}
\label{sec:pf_uniform_m}

We are now ready to prove the uniform boundedness of $\{u_m\}$. We first note that Proposition~\ref{pr:converge_e} holds for every $\rho \geq 5$ and not only $\rho_0$. So we fix $\rho \geq 5$ and $M>0$ arbitrary, and let $u_0$ be independent with $W$ and that $\|u_0\|_{L_{\omega}^{\infty}L_{x}^{2}} \leq M$. Let $u_m \in L_{\omega}^{\rho}\xX(0,1)$ be the solution as in \eqref{eq:trun_critical}. We want to get a bound of $\|u_m\|_{L_{\omega}^{\rho}\xX(0,1)}$ that depends on $\rho$ and $M$ only. 

Let $h>0$ be sufficiently small whose value, depending on $M$ only, will be specified later. Let $[a,b]$ be an arbitrary subinterval of $[0,1]$ with $b-a \leq h$. We first control the $\xX_2$-norm of $u_m$ on $[a,b]$. Let
\begin{equation} \label{eq:uniform_m_martingale}
\mM^*(t) := \sup_{0 \leq r_1 \leq r_2 \leq t} \Big\| \int_{r_1}^{r_2} \sS(t-s) u_{m}(s) {\rm d} W_s \Big\|_{L_{x}^{10}}. 
\end{equation}
Note that the supremum is taken over times in $[0,t]$ but not $[a,t]$. This is because we do not need to control $\mM^*$ in terms of $h$ any more, and ranging from $0$ will be convenient for us later. By Proposition~\ref{pr:pre_stochastic}, we have
\begin{equation} \label{eq:uniform_m_martingale_bd}
\|\mM^*\|_{L_{\omega}^{\rho}L_{t}^{5}(a,b)} \leq \|\mM^*\|_{L_{\omega}^{\rho}L_{t}^5(0,1)} \leq C_{\rho} M. 
\end{equation}
In particular, $\|\mM^{*}\|_{L_{t}^{5}(a,b)}$ is almost surely finite. Hence, we can choose a random dissection $\{\tau_k\}_{k=0}^{K}$ of the interval $[a,b]$ as follows. Let $\tau_0 = a$, and define $\tau_k$ recursively by
\begin{equation*}
\tau_{k+1} = b \wedge \inf \Big\{ r>\tau_k: \int_{\tau_k}^{r} |\mM^{*}(t)|^{5} {\rm d}t = \big(\frac{\eta_M}{2}\big)^5 \Big\}, 
\end{equation*}
where $\eta_M$ is the same as in Proposition~\ref{pr:main_deter_sta}. The total number of intervals is at most
\begin{equation*}
K \leq 1 + \bigg(\frac{2\|\mM^*\|_{L_{t}^{5}(a,b)}}{\eta_M}\bigg)^{5} \leq 1 + C_{M} \|\mM^*\|_{L_{t}^{5}(0,1)}^{5}, 
\end{equation*}
where we have enlarged the range of interval to $[0,1]$, and $C_M$ is a constant that depends on $M$ only. 

Similar as before, let $\iI_{k+1} = [\tau_k, \tau_{k+1}]$. For $t \in \iI_{k+1}$, let
\begin{equation*}
g_{m}(t) = -i \int_{\tau_k}^{t} \sS(t-s) u_{m}(s) {\rm d} W_s - \frac{1}{2} \int_{\tau_k}^{t} \sS(t-s) \big( F_{\Phi} u_{m}(s) \big) {\rm d}s, 
\end{equation*}
where we omit the dependence of $g_m$ on $k$ in order to be consistent with the notation in Proposition~\ref{pr:strong_sta_m}. The choice of the dissection above ensures that
\begin{equation*}
\Big\| \int_{\tau_k}^{t} \sS(t-s) u_{m}(s) {\rm d}W_s \Big\|_{\xX_2(\iI_{k+1})} \leq \|\mM^*\|_{L_{t}^{5}(\iI_{k+1})} \leq \frac{\eta_M}{2}. 
\end{equation*}
Also, by Proposition~\ref{pr:pre_correction}, we can choose $h$ sufficiently small (but depending on $M$ only) such that
\begin{equation} \label{eq:choice_h}
\frac{1}{2} \Big\| \int_{\tau_k}^{t} \sS(t-s) \big( F_{\Phi} u_{m}(s) \big) {\rm d}s \Big\|_{\xX_2(\iI_{k+1})} \leq C M h^{\frac{4}{5}} \leq \frac{\eta_M}{2}. 
\end{equation}
The above two bounds together imply
\begin{equation*}
\|g_{m}\|_{\xX_2(\iI_{k+1})} \leq \eta_M. 
\end{equation*}
Hence, the assumption of Proposition~\ref{pr:main_deter_sta} is satisfied on the interval $\iI_{k+1}$, and we have
\begin{equation*}
\|u_m\|_{\xX_2(\iI_{k+1})}^{5} \leq B_M^5, 
\end{equation*}
where $B_M$ is also the same as in Proposition~\ref{pr:strong_sta_m}. This is true for all $k$. Hence, summing over $k$ from $0$ to $K-1$ gives
\begin{equation*}
\|u_m\|_{\xX_2(a,b)}^{5} \leq B_{M}^{5} K \leq B_{M}^{5} \big( 1 + C_M \|\mM^*\|_{L_{t}^{5}(0,1)} \big). 
\end{equation*}
This bound on $\|u_m\|_{\xX_2(a,b)}^{5}$ is uniform in the interval $[a,b]$ with $b-a<h$, so $\|u_m\|_{\xX_2(0,1)}^{5}$ is bounded by the right hand side above multiplied by $1 + \frac{1}{h}$. Since the choice of $h$ in \eqref{eq:choice_h} depends on $M$ only, we conclude that
\begin{equation*}
\|u_m\|_{\xX_2(0,1)} \lesssim_M 1 + \|\mM^*\|_{L_{t}^{5}(0,1)},  
\end{equation*}
where the proportionality constant depends on $M$ only. The proof is complete by taking $L_{\omega}^{\rho}$-norm on both sides and applying \eqref{eq:uniform_m_martingale_bd}.

\section{Removing the truncation -- proof of Theorem~\ref{th:main}}
\label{sec:converge_m}

In this section, we prove Theorem~\ref{th:main}. We will show that the sequence of solutions $\{u_{m}\}$ is Cauchy in $L_{\omega}^{\rho_{0}}\xX(0,1)$, and that the limit $u$ satisfies the corresponding Duhamel's formula for equation \ref{eq:main_eq}. The removal of the truncation $m$ relies crucially on the uniform bound in Proposition \ref{pr:uniform_m}. 

For each $m$ and $\eps$, we let
\begin{equation} \label{eq:stopping_time}
\begin{split}
\tau_{m} &= 1 \wedge \inf \big\{ t \leq 1: \|u_{m}\|_{\xX_{2}(0,t)}^{5} \geq m-1 \big\},\\
\tau_{m,\eps} &= 1 \wedge \inf \big\{ t \leq 1: \|u_{m,\eps}\|_{\xX_{2}(0,t)}^{5} \geq m \big\}. 
\end{split}
\end{equation}
Note that $\tau_m$ is different from $\tau_{m,0}$. We make $\tau_m$ the stopping time when hitting $m-1$ instead of $m$ to simplify the arguments in Lemma~\ref{le:st} below, while $\tau_m,\eps$ (including $\eps=0$) is still the time of hitting $m$. 

It has been shown in \cite[Lemma 4.1]{BD} that for every $\eps>0$ and every $m$, $\tau_{m,\eps} \leq \tau_{m+1,\eps}$ almost surely, and 
\begin{equation} \label{eq:st_dd}
u_{m,\eps} = u_{m+1,\eps} \quad \text{in}\; \xX(0,\tau_{m,\eps})
\end{equation}
almost surely. Note that the null set excluded may be different when $\eps$ changes, and it does not exclude the possibility that the set of $\omega \in \Omega$ for which \eqref{eq:st_dd} is true for all $\eps$ has probability $0$! Nevertheless, we have a similar statement for the limit $u_{m}$. 

\begin{lem} \label{le:st}
	For every $m$, we have $u_{m} = u_{m+1}$ in $\xX(0,\tau_{m} \wedge \tau_{m+1})$ almost surely. 
\end{lem}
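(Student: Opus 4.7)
The plan is to pass to the limit $\eps \to 0$ in the identity $u_{m,\eps} = u_{m+1,\eps}$ on $\xX(0,\tau_{m,\eps})$ established in \cite[Lemma 4.1]{BD}, and transfer it to the limiting profiles $u_m$ and $u_{m+1}$ via Proposition~\ref{pr:converge_e}. The identities in \cite{BD} hold for each fixed $\eps>0$ up to a null set that a priori depends on $\eps$; a subsequence extraction collapses this dependence.

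First, by Proposition~\ref{pr:converge_e}, both $u_{m,\eps} \to u_m$ and $u_{m+1,\eps} \to u_{m+1}$ in $L_{\omega}^{\rho_0}\xX(0,1)$ as $\eps \to 0$. I would extract a common subsequence $\eps_n \downarrow 0$ along which both convergences hold almost surely in $\xX(0,1)$. Since $\{\eps_n\}$ is countable, the union of the exceptional null sets attached to \eqref{eq:st_dd} along this subsequence is still null, so on a full probability event $\Omega_0$ all of the following hold pathwise: $u_{m,\eps_n}(\omega) \to u_m(\omega)$ and $u_{m+1,\eps_n}(\omega) \to u_{m+1}(\omega)$ in $\xX(0,1)$, and $u_{m,\eps_n}(\omega) \equiv u_{m+1,\eps_n}(\omega)$ on $\xX(0,\tau_{m,\eps_n}(\omega))$ for every $n$.

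The key step is the pathwise comparison $\liminf_n \tau_{m,\eps_n}(\omega) \geq \tau_m(\omega)$ (and the analogue for $m+1$) on $\Omega_0$. This is precisely where the asymmetric choice of thresholds $m-1$ vs.\ $m$ in \eqref{eq:stopping_time} becomes crucial. Since $t \mapsto \|u_m(\omega)\|_{\xX_2(0,t)}^5 = \int_0^t \|u_m(s,\omega)\|_{L_x^{10}}^5\,{\rm d}s$ is continuous in $t$, for any $t < \tau_m(\omega)$ we have the strict inequality $\|u_m(\omega)\|_{\xX_2(0,t)}^5 < m-1$. From $u_{m,\eps_n}(\omega) \to u_m(\omega)$ in $\xX_2(0,1)$ it follows that $\|u_{m,\eps_n}(\omega)\|_{\xX_2(0,t)}^5 < m$ for all $n$ large enough, i.e.\ $t \leq \tau_{m,\eps_n}(\omega)$ for such $n$.

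Finally, for any $t < \tau_m(\omega) \wedge \tau_{m+1}(\omega)$ the previous step gives $t < \tau_{m,\eps_n}(\omega) \wedge \tau_{m+1,\eps_n}(\omega)$ for $n$ large, so $u_{m,\eps_n}(\cdot,\omega) \equiv u_{m+1,\eps_n}(\cdot,\omega)$ on $\xX(0,t)$. Passing to the limit $n \to \infty$ in $\xX(0,t)$ yields $u_m = u_{m+1}$ in $\xX(0,t)$; letting $t \uparrow \tau_m(\omega) \wedge \tau_{m+1}(\omega)$ concludes the proof. The main technical point is the stopping time comparison in the third paragraph: without the threshold gap, one would have to worry about the possibility that $\|u_{m,\eps_n}\|_{\xX_2(0,t)}^5$ equals $m$ at the boundary, which the gap reduces to a routine continuity argument.
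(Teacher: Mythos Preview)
Your proof is correct and follows essentially the same approach as the paper: extract a subsequence $\eps_n\to 0$ along which $u_{m,\eps_n}\to u_m$ and $u_{m+1,\eps_n}\to u_{m+1}$ almost surely in $\xX(0,1)$, use the threshold gap $m-1$ versus $m$ in \eqref{eq:stopping_time} to deduce $\tau_{m,\eps_n}\geq \tau_m$ (pathwise, eventually in $n$), and then pass to the limit in the identity \eqref{eq:st_dd}. Your explicit discussion of the threshold gap and the countable union of null sets is a welcome clarification; note also that the analogue for $m+1$ is not actually needed, since \eqref{eq:st_dd} already gives equality on $\xX(0,\tau_{m,\eps_n})$ and $\tau_{m,\eps_n}\leq\tau_{m+1,\eps_n}$.
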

\begin{proof}
	Fix $m$ arbitrary. Since $u_{k,\eps} \rightarrow u_{k}$ in $L_{\omega}^{\rho_0}\xX(0,1)$ for $k=m,m+1$, there exists $\Omega' \subset \Omega$ with full measure and a sequence $\eps_n \rightarrow 0$ such that $\|u_{k,\eps_n} - u_k\|_{\xX(0,1)} \rightarrow 0$ on $\Omega'$. We now only consider $\omega \in \Omega'$. Write
	\begin{equation*}
	\|u_{m} - u_{m+1}\| \leq \|u_{m} - u_{m,\eps_n}\| + \|u_{m,\eps_n}-u_{m+1,\eps_n}\| + \|u_{m+1,\eps_n}-u_{m+1}\|, 
	\end{equation*}
	where all the norms above are $\xX(0,\tau_m \wedge \tau_{m+1})$. The first and third terms can be made arbitrarily small when $n$ is large. Also, the convergence of $u_{m,\eps_n}$ to $u_m$ and the definition of the stopping times in \eqref{eq:stopping_time}
	imply that $\tau_{m,\eps_n} \geq \tau_m$ for all sufficiently large $n$. By \eqref{eq:st_dd}, we then have $u_{m,\eps_n} = u_{m+1,\eps_n}$ in $\xX(0,\tau_{m} \wedge \tau_{m+1})$ almost surely if $n$ is large. This shows that on a set of full measure, $|u_{m} - u_{m+1}|$ can be made arbitrarily small, thus concluding the proof. 
\end{proof}

\begin{proof} [Proof of Theorem~\ref{th:main}]
	We are now ready to prove Theorem~\ref{th:main}. We first show that $\{u_{m}\}$ is Cauchy in $L_{\omega}^{\rho_0} \xX(0,1)$. To see this, we fix $\delta>0$ arbitrary. By Proposition~\ref{pr:uniform_m} and that $u_{m,\eps} \rightarrow u_{m}$ in $L_{\omega}^{\rho_0} \xX(0,1)$, we have
	\begin{equation} \label{eq:trun_small}
	\Pr \Big( \|u_{m}\|_{\xX(0,1)}^{5} \geq K \Big) \leq \frac{C(M,\rho_0)}{K^{\rho_0}}
	\end{equation}
	for every $m$ and every $K$. Now, for every $m$, $m'$ and $K$, we let
	\begin{equation*}
	\Omega_{m,m'}^{K} = \Big\{ \|u_{m}\|_{\xX(0,1)}^{5} \geq K \Big\} \cup \Big\{ \|u_{m'}\|_{\xX(0,1)}^{5} \geq K \Big\}. 
	\end{equation*}
	By H\"older inequality, the bound \eqref{eq:trun_small} and Proposition~\ref{pr:uniform_m} with $\rho=2\rho_0$, we know there exists $C>0$ such that
	\begin{equation} \label{eq:truncation_small}
	\big\|\1_{\Omega_{m,m'}^{K}} \big(u_{m} - u_{m'} \big) \big\|_{L_{\omega}^{\rho_0} \xX(0,1)} \leq \big(\Pr(\Omega_{m,m'}^{K})\big)^{\frac{1}{2}} \|u_{m}-u_{m'}\|_{L_{\omega}^{2\rho_0}\xX(0,1)} \leq \frac{C(M,\rho_0)}{K^{1/10}}
	\end{equation}
	for all $K$, $m$ and $m'$. Hence, there exists $K^*$ large enough so that the right hand side of \eqref{eq:truncation_small} is smaller than $\delta$ if $K=K^*$. Take any $m, m' > K^*+1$. By Lemma~\ref{le:st}, we have
	\begin{equation*}
	u_{m} = u_{m'} \quad \text{on} \phantom{1} (\Omega_{m,m'}^{K})^{c}. 
	\end{equation*}
	Combining with \eqref{eq:truncation_small}, we deduce that $\|u_{m} - u_{m'}\|_{L_{\omega}^{\rho_0}\xX(0,1)} < \delta$ whenever $m,m' > K^*+1$. Hence, $\{u_{m}\}$ is Cauchy and converges to a limit $u$ in $L_{\omega}^{\rho_0}\xX(0,1)$. 
	
	We now show that the limit $u$ satisfies the Duhamel's formula \eqref{eq:duhamel_main}. This part follows similarly as the proof for the equation of $u_{m}$, but only easier. We need to show that each term on the right hand side of \eqref{eq:duhamel_trun_sub} converges to the corresponding term in $L_{\omega}^{\rho_0}\xX(0,1)$ as $m \rightarrow +\infty$. The convergence of
	\begin{equation*}
	\int_{0}^{t} \sS(t-s) u_{m}(s) {\rm d} W_s \quad \text{and} \quad \int_{0}^{t} \sS(t-s) \big( F_{\Phi} u_{m}(s) \big) {\rm d}s
	\end{equation*}
	follows immediately from the bounds in Propositions~\ref{pr:pre_stochastic} and~\ref{pr:pre_correction} and that $u_{m} \rightarrow u$ in $L_{\omega}^{\rho_0}\xX(0,1)$. We now turn to the nonlinearity
	\begin{equation*}
	\int_{0}^{t} \sS(t-s) \Big( \theta_{m}\big(\|u_m\|_{\xX_2(0,s)}^{5}\big) \nN\big(u_m(s)\big) - \nN\big(u(s)\big) \Big) {\rm d}s. 
	\end{equation*}
	The integrand satisfies the pointwise bound
	\begin{equation*}
	\begin{split}
	&\phantom{111}\theta_{m}\big(\|u_m\|_{\xX_2(0,s)}^{5}\big) \nN\big(u_m(s)\big) - \nN\big(u(s)\big)\\
	&\leq \big| \theta_{m}\big(\|u_m\|_{\xX_2(0,s)}^{5}\big) -1 \big| |u_{m}(s)|^{5} + C |u_{m}(s) - u(s)| \big( |u_{m}(s)|^{4} + |u(s)|^{4} \big)\\
	&\leq \1_{\big\{\|u_m\|_{\xX(0,1)}^{5} \geq m\big\}} |u_m(s)|^{5} + C |u_{m}(s) - u(s)| \big( |u_{m}(s)|^{4} + |u(s)|^{4} \big), 
	\end{split}
	\end{equation*}
	where we have used the fact that $\theta_m(\cdot)$ is always between $0$ and $1$, and it does not equal to $1$ only if its argument is bigger than $m$. With this pointwise bound and the Strichartz estimates \eqref{eq:pre_nonlinear_critical}, we get
	\begin{equation} \label{eq:final_nonlinear}
	\begin{split}
	&\phantom{111}\Big\|\int_{0}^{t} \sS(t-s) \Big( \theta_{m}\big(\|u_m\|_{\xX_2(0,s)}^{5}\big) \nN\big(u_m(s)\big) - \nN\big(u(s)\big) \Big) {\rm d}s\Big\|_{\xX(0,1)}\\
	&\leq C \Big( \1_{\{\|u_m\|_{\xX(0,1)}^{5} \geq m\}} \|u_{m}\|_{\xX_2(0,1)}^{5} + \|u_m-u\|_{\xX_2(0,1)} \big( \|u_m\|_{\xX_2(0,1)}^{4} + \|u\|_{\xX_2(0,1)}^{4} \big) \Big). 
	\end{split}
	\end{equation}
	Now we need to use the fact that the convergence of $u_m$ to $u$ holds in $L_{\omega}^{\rho}\xX(0,1)$ for all $\rho$ (and in particular $\rho > \rho_0$). More precisely, we have
	\begin{equation} \label{eq:final_bigger_rho}
	\|u_m - u\|_{L_{\omega}^{\rho}\xX(0,1)} \rightarrow 0, \quad \|u_m\|_{L_{\omega}^{\rho}\xX(0,1)} \leq C_{\rho}, \quad \|u\|_{L_{\omega}^{\rho}\xX(0,1)} \leq C_{\rho}
	\end{equation}
	for all $\rho \geq 1$. Hence, taking $L_{\omega}^{\rho_0}$-norm on both sides of \eqref{eq:final_nonlinear}, and using H\"older inequality and \eqref{eq:final_bigger_rho}, we see that $\|\dD\|_{L_{\omega}^{\rho_0}\xX(0,1)} \rightarrow 0$ as $m \rightarrow +\infty$. This shows that $u$ satisfies the Duhamel's formula \eqref{eq:duhamel_main}. 
	
	We finally turn to the stability of the solution under perturbation of initial data. Let $u_0, v_0 \in L_{\omega}^{\infty}L_{x}^{2}$ with $\|u_0\|_{L_{\omega}^{\infty}L_{x}^{2}}, \|v_0\|_{L_{\omega}^{\infty}L_{x}^{2}} \leq M$. Let $u$ and $v$ be the two solutions constructed from the above mentioned procedure with initial data $u_0$ and $v_0$ respectively. We have
	\begin{equation} \label{eq:final_stability_triangle}
	\|u-v\|_{L_{\omega}^{\rho_0}\xX(0,1)} \leq \|u-u_m\|_{L_{\omega}^{\rho_0}\xX(0,1)} + \|u_m - v_m\|_{L_{\omega}^{\rho_0}\xX(0,1)} + \|v_m - v\|_{L_{\omega}^{\rho_0}\xX(0,1)}. 
	\end{equation}
	By the arguments above for the Cauchy property of $\{u_m\}$ and in particular the bound \eqref{eq:truncation_small}, we have
	\begin{equation*}
	\|u-u_m\|_{L_{\omega}^{\rho_0}\xX(0,1)} \leq \frac{C}{m^{1/10}}\;, \qquad \|v-v_m\|_{L_{\omega}^{\rho_0}\xX(0,1)} \leq \frac{C}{m^{1/10}}
	\end{equation*}
	for all $m$. The constant $C$ depends on $M$ and $\rho_0$ only. Hence, for every $\delta>0$, we can choose $m$ sufficiently large depending on $M$ and $\rho_0$ only such that the first and third terms on the right hand side of \eqref{eq:final_stability_triangle} are both smaller than $\frac{\delta}{3}$. By Proposition~\ref{pr:uniform_stable}, with this choice of $m$, there exists $\kappa>0$ such that if $\|u_0 - v_0\|_{L_{\omega}^{\infty}L_{x}^{2}} < \kappa$, then the second term in \eqref{eq:final_stability_triangle} is also smaller than $\frac{\delta}{3}$. Since $m$ depends on $M$ and $\rho_0$ only, so does $\kappa$. The proof is thus complete. 
\end{proof}

\appendix

\endappendix

\bibliographystyle{Martin}
\bibliography{Refs}

\end{document}